\theoremstyle{thmstyleone}%
\newtheorem{theorem}{Theorem}
\theoremstyle{thmstyletwo}%
\newtheorem{lemma}{Lemma}
\theoremstyle{thmstylethree}%
\newtheorem{definition}{Definition}%
\begin{document}
	
	\title[Sketching Algorithms for Low-Rank Tucker Approximation]{Practical Sketching Algorithms for Low-Rank Tucker Approximation of Large Tensors}

	\author[1]{\fnm{Wandi } \sur{Dong}}\email{15560159213@163.com}
	
	\author*[1]{\fnm{Gaohang} \sur{Yu}}\email{maghyu@163.com}
	
	\author[2,1,3]{\fnm{Liqun} \sur{Qi}}\email{liqun.qi@polyu.edu.hk}
	
	\author[4]{\fnm{Xiaohao} \sur{Cai}}\email{x.cai@soton.ac.uk}

	\affil[1]{\orgdiv{Department of Mathematics}, \orgname{Hangzhou Dianzi University}, \city{Hangzhou}, \postcode{310018}, \country{China}}
	
	\affil[2]{\orgname{Huawei Theory Research Lab}, \city{Hong Kong}, 
		\country{China}}
	
	\affil[3]{\orgdiv{Department of Applied Mathematics},  \orgname{Hongkong Polytechnic University}, \city{Hong Kong}, 
		\country{China}}

	\affil[4]{\orgdiv{School of Electronics and Computer Science}, \orgname{University of Southampton},  \city{Southampton}, \postcode{SO17 1BJ},  \country{UK}}
	
	
	\abstract{Low-rank approximation of tensors has been widely used in high-dimensional data analysis. It usually involves singular value decomposition (SVD) of large-scale matrices with high computational complexity. Sketching is an effective data compression and dimensionality reduction technique applied to the low-rank approximation of large matrices. This paper presents two practical randomized algorithms for low-rank Tucker approximation of large tensors based on sketching and power scheme, with a rigorous error-bound analysis. Numerical experiments on synthetic and real-world tensor data demonstrate the competitive performance of the proposed algorithms.}

	\keywords{tensor sketching, randomized algorithm, Tucker decomposition, subspace power iteration, high-dimensional data}
	
	
	\pacs[MSC Classification]{68W20, 15A18, 15A69}
	
	\maketitle

	\section{{Introduction}} \label{Sec:intro}
	In practical applications, high-dimensional data, such as color images, hyperspectral images and videos, often exhibit a low-rank structure. Low-rank approximation of tensors has become a general tool for compressing and approximating high-dimensional data and has been widely used in scientific computing, machine learning, signal/image processing, data mining, and many other fields \cite{Comon2014}. The classical low-rank tensor factorization models include, e.g., Canonical Polyadic decomposition (CP) \cite{Hitchcock1928,Kiers2000}, Tucker decomposition \cite{Tucker1963,Tucker1966,De Lathauwer et al2000}, Hierarchical Tucker (HT) \cite{Hackbusch and Kuhn2009,Grasedyck2010}, and Tensor Train decomposition (TT) \cite{Oseledets2011}.
	This paper focuses on low-rank Tucker decomposition, also known as the low multilinear rank approximation of tensors. When the target rank of Tucker decomposition is much smaller than the original dimensions, it will have good compression performance.
	For a given $N$th-order tensor $\mathcal{X}\in\mathbb{R}^{I_1\times I_2\times\ldots\times I_N}$, the low-rank Tucker decomposition can be formulated as the following optimization problem, i.e.,
	\begin{equation}\label{minf}
		\min_{\mathcal{Y}}\|\mathcal{ X}-\mathcal{Y}\|_F^2,
	\end{equation}
	where $\mathcal{Y}\in\mathbb{R}^{I_1\times I_2\times\ldots\times I_N}$, with $\text{rank}(Y_{(n)})\leq r_n$ for $n=1,2,\ldots,N$, $Y_{(n)}$ is the mode-$n$ unfolding matrix of $\mathcal{Y}$, and $r_n$ is the rank of the mode-$n$ unfolding matrix of $\mathcal{X}$.

	For the Tucker approximation of higher-order tensors, the most frequently used non-iterative algorithms are the improved algorithms for the higher-order singular value decomposition (HOSVD) \cite{Tucker1966}, the truncated higher-order SVD (THOSVD) \cite{De Lathauwer2000} and the sequentially truncated higher-order SVD (STHOSVD) \cite{Vannieuwenhoven et al2012}. Although the results of THOSVD and STHOSVD are usually sub-optimal, they can use as reasonable initial solutions for iterative methods such as higher-order orthogonal iteration (HOOI) \cite{De Lathauwer2000}. However, both algorithms rely directly on SVD when computing the singular vectors of intermediate matrices, requiring large memory and high computational complexity when the size of tensors is large.

	Strikingly, randomized algorithms can reduce the communication among different levels of memories and are parallelizable. In recent years, many scholars have become increasingly interested in randomized algorithms for finding approximation Tucker decomposition of large-scale data tensors \cite{Zhou et al2014,Che and Wei2019,Minster et al2020,Che et al2020,Che et al2021,sun2020,Malik and Becker2018,Ahmadi-Asl et al2021}. For example, Zhou et al. \cite{Zhou et al2014} proposed a randomized version of the HOOI algorithm for Tucker decomposition. Che and Wei \cite{Che and Wei2019} proposed an adaptive randomized algorithm to solve the multilinear rank of tensors. Minster et al. \cite{Minster et al2020} designed randomized versions of the THOSVD and STHOSVD algorithms, i.e., R-STHOSVD. {Sun et al. \cite{sun2020} presented a single-pass randomized algorithm to compute the low-rank Tucker approximation of tensors based on a practical matrix sketching algorithm for streaming data, see also  \cite{Tropp et al2019} for more details.} Regarding more randomized algorithms proposed for Tucker decomposition, please refer to \cite{Che et al2021,Che et al2020,Malik and Becker2018,Ahmadi-Asl et al2021} for a detailed  review of randomized algorithms for solving Tucker decomposition of tensors in recent years involving, e.g., random projection, sampling, count-sketch, random least-squares, single-pass, and multi-pass algorithms.
	
{This paper presents two efficient randomized algorithms for finding the low-rank Tucker approximation of tensors, i.e., Sketch-STHOSVD and sub-Sketch-STHOSVD summarized in Algorithms \ref{alg:Sketch-STHOSVD} and \ref{alg:sub-Sketch-STHOSVD}, respectively. The main contributions of this paper are threefold.
Firstly, we propose a new one-pass sketching algorithm (i.e., Algorithm \ref{alg:Sketch-STHOSVD}) for low-rank Tucker approximation, which can significantly improve the computational efficiency of STHOSVD. Secondly, we present a new matrix sketching algorithm (i.e., Algorithm \ref{alg:sub-Sketch}) by combining the two-sided sketching algorithm proposed by Tropp et al. \cite{Tropp et al2019} with subspace power iteration. Algorithm \ref{alg:sub-Sketch} can accurately and efficiently compute the low-rank approximation of large-scale matrices.  Thirdly, the proposed Algorithm \ref{alg:sub-Sketch-STHOSVD} can deliver a more accurate Tucker approximation than simpler randomized algorithms by combining the subspace power iteration. More importantly, sub-Sketch-STHOSVD can converge quickly for any data tensors and  independently of singular value gaps.}
	
The rest of this paper is organized as follows. Section \ref{Sec:pre} briefly introduces some basic notations, definitions, and tensor-matrix operations used in this paper and recalls some classical algorithms, including THOSVD, STHOSVD, and R-STHOSVD, for low-rank Tucker approximation. Our proposed two-sided sketching algorithm for STHOSVD is given in Section \ref{Sec:sketch}. In Section \ref{Sec:sketch-alg}, we present an improved algorithm with subspace power iteration. The effectiveness of the proposed algorithms is validated thoroughly in Section \ref{Sec:exp} by numerical experiments on synthetic and real-world data tensors. We conclude in Section \ref{Sec:con}.
	
	\section{{Preliminary}} \label{Sec:pre}
	\subsection{{Notations and basic operations}}
	Some common symbols used in this paper are shown in the following Table \ref{Tab:symbol}.
	
	\begin{table} [h]
		\centering
		\caption{Common symbols used in this paper.} \label{Tab:symbol}
		\begin{tabular}{cc}
			\toprule
			\multicolumn{1}{m{1.5cm}}{\centering Symbols} &
			\multicolumn{1}{m{6cm}}{\centering Notations}\\
			\midrule
			$a$&scalar\\
			${A}$	& matrix \\
			$\mathcal{X}$	& tensor  \\
			${X}_{(n)}$ & mode-$n$ unfolding matrix of $\mathcal{X}$ \\
			$\times_n$ & mode-$n$ product of tensor and matrix \\
			${I}_n$ & identity matrix with size $n \times n$ \\
			$\sigma_{i}({A})$ & the $i$th largest singular value of ${A}$\\
			${A}^\top$ & transpose of ${A}$\\
			${A}^\dagger$ & pseudo-inverse of ${A}$\\
			\bottomrule
		\end{tabular}
	\end{table}
	
	We denote an $N$th-order tensor $\mathcal{X}\in\mathbb{R}^{I_1\times I_2\times\ldots\times I_N}$ with entries given by $x_{i_1,i_2,...,i_N},1\leq i_n\leq I_n, n=1,2,...,N.$ The Frobenius norm of $\mathcal{X}$ is defined as $$\|\mathcal{X}\|_F=\sqrt{\sum_{i_1,i_2,...,i_N}^{I_1,I_2,...,I_N}x_{i_1,i_2,...,i_N}^2} \ .$$
	The mode-$n$ tensor-matrix multiplication is a frequently encountered operation in tensor computation. The mode-$n$ product of a tensor $\mathcal{ X}\in\mathbb{R}^{I_1\times I_2\times\ldots\times I_N}$ by a matrix $A\in\mathbb{R}^{K\times I_n}$ (with entries $a_{k,i_n}$) is denoted as $\mathcal{Y}=\mathcal{ X}\times_{n}A\in \mathbb{R}^{I_1\times...\times I_{n-1}\times K \times I_{n+1}\times ...\times I_N}$, with entries
	$$y_{i_1,...,i_{n-1},k,i_{n+1},...,i_N}=\sum_{i_n=1}^{I_n}x_{i_1,...,i_{n-1},i_n,i_{n+1},...,i_N}a_{k,i_n}.$$
	The mode-$n$ matricization of higher-order tensors is the reordering of tensor elements into a matrix. The columns of mode-$n$ unfolding matrix $X_{(n)}\in \mathbb{R}^{I_n\times(\prod_{N\neq n}I_N)}$ are the mode-$n$ fibers of $\mathcal{X}$. More specifically, a element $(i_1,i_2,...,i_N)$ of $\mathcal{X}$ is maps on a element $(i_n,j)$ of $X_{(n)}$, where
	\begin{equation}
		j=1+\sum_{k=1,k\neq n}^{N}[(i_k-1)\prod_{m=1,m\neq n}^{k-1}I_m].
		\nonumber
	\end{equation}
	Let the rank of mode-$n$ unfolding matrix $X_{(n)}$ is $r_n$, the integer array $(r_1,r_2,...,r_N)$ is Tucker-rank of $N$th-order tensor $\mathcal{X}$, also known as the multilinear rank. The Tucker decomposition of $\mathcal{X}$ with rank $(r_1,r_2,...,r_N)$ is expressed as
	\begin{equation}\label{eqn:2}
		\mathcal{X}=\mathcal{G}\times_1{U}^{(1)}\times_2{U}^{(2)}\ldots\times_N{U}^{(N)},
	\end{equation}
	where $\mathcal{G}\in\mathbb{R}^{r_1\times r_2\times\ldots\times r_N}$ is the core tensor, and $\{{U}^{(n)}\}_{n=1}^{N}$ with ${U}^{(n)}\in \mathbb{R}^{I_n\times r_n}$ is the mode-$n$ factor matrices. The graphical illustration of Tucker decomposition for a third-order tensor shows in Figure \ref{fig:tucker}. We denote an optimal rank-$(r_1,r_2,...,r_N)$ approximation of a tensor $\mathcal{X}$ as $\hat{\mathcal{X}}_{\rm opt}$, which is the optimal Tucker approximation by solving the minimization problem in \eqref{minf}.
	\begin{figure}[h]\label{tucker}
		\centering
		\includegraphics[width=0.5\linewidth]{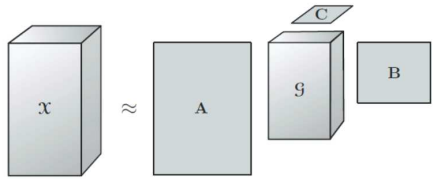}
		\caption{Tucker decomposition of a third-order tensor. }
		\label{fig:tucker}
	\end{figure}
	Below we present the definitions of some concepts used in this paper.

	\begin{definition}
		(Kronecker products) The Kronecker product of matrices $A\in \mathbb{R}^{m\times n}$ and $B\in \mathbb{R}^{k\times l}$ is defined as
		$$A\otimes B=\begin{bmatrix}
			a_{11}B & a_{12}B & ... & a_{1n}B \\
			a_{21}B & a_{22}B & ... & a_{2n}B \\
			\colon  & \colon  &\ddots &\colon  \\
			a_{m1}B & a_{m2}B & ... & a_{mn}B
		\end{bmatrix}\in \mathbb{R}^{mk\times nl}.$$
	\end{definition}
	The Kronecker product helps express Tucker decomposition. The Tucker decomposition in \eqref{eqn:2} implies
	$$X_{(n)}=U^{(n)}G_{(n)}(U^{(N)}\otimes...\otimes U^{(n+1)}\otimes U^{(n-1)}\otimes...\otimes U^{(1)} )^\top.$$

	\begin{definition}
		[Standard normal matrix] The elements of a standard normal matrix follow the real standard normal distribution (i.e., Gaussian with mean zero and variance one) form an independent family of standard normal random variables.
	\end{definition}
	
	\begin{definition}
		{(Standard Gaussian tensor) The elements of a standard Gaussian tensor follow the standard Gaussian distribution.}
	\end{definition}
	
	\begin{definition}
		[Tail energy] The $j$th tail energy of a matrix $X$ is defined as
		$$\tau_{j}^{2}({X}):=\min_{{\rm rank}({Y})<j}\|{X}-{Y}\|_{F}^{2}=\sum_{i\geq j}\sigma_{i}^{2}({X}).$$
	\end{definition}

	\subsection{Truncated higher-order SVD}
	Since the actual Tucker rank of large-scale higher-order tensor is hard to compute, the truncated Tucker decomposition with a pre-determined truncation $(r_1,r_2,...,r_N)$ is widely used in practice. THOSVD is a popular approach to computing the truncated Tucker approximation, also known as the best low multilinear rank-$(r_1,r_2,...,r_N)$ approximation, which reads
	
	\begin{equation}\nonumber
		\begin{aligned}
			\min_{\mathcal{G};{U}^{(1)},{U}^{(2)},\cdots,{U}^{(N)}}\|\mathcal{ X}-\mathcal{G}\times_1{U}^{(1)}\times_2{U}^{(2)}\cdots\times_N{U}^{(N)}\|_F^2\\
			{\rm s.t.}\quad {U}^{(n)\top}{U}^{(n)}={I}_{r_n}, n\in\{1,2,...,N\}.
		\end{aligned}
	\end{equation}
	
	\begin{algorithm}[H]
		\caption{THOSVD}
		\label{alg:THOSVD}
		\begin{algorithmic}[1]
			\Require  tensor $\mathcal{X}\in\mathbb{R}^{I_1\times I_2\times\ldots\times I_N}$ and target rank $(r_1,r_2,\ldots,r_N)$
			\Ensure Tucker approximation $\hat{\mathcal{X}}=\mathcal{G}\times_1{U}^{\left(1\right)}\times_1{U}^{\left(2\right)}\cdots\times_N{U}^{\left(N\right)}$
			
			\For   {$n = 1,2,\ldots,N$}
			\State    $ (U^{(n)},\sim,\sim)\gets {\tt truncatedSVD} ({X}_{(n)},r_n)$
			\EndFor
			\State   $\mathcal{G}\gets {\mathcal{X}\times}_1{U}^{\left(1\right)\top}\times_2{U}^{\left(2\right)\top}\cdots\times_N{U}^{\left(N\right)\top}$
		\end{algorithmic}
	\end{algorithm}

	Algorithm \ref{alg:THOSVD} summarizes the THOSVD approach. Each mode is processed individually in Algorithm \ref{alg:THOSVD}. The low-rank factor matrices of mode-$n$ unfolding matrix $X_{(n)}$ are computed through the truncated SVD,
	i.e.,
	$${X}_{(n)}=\begin{bmatrix}
		{U}^{(n)}	& \tilde{{U}^{(n)}}
	\end{bmatrix}\begin{bmatrix}
		S^{(n)}&  \\
		& \tilde{S^{(n)}}
	\end{bmatrix}\begin{bmatrix}
		{V}^{(n)\top}\\
		\tilde {{V}^{(n)\top}}
	\end{bmatrix}\cong{U}^{(n)} S^{(n)} {V}^{(n)\top},$$
	where ${U}^{(n)} S^{(n)} {V}^{(n)\top}$ is a rank-$r_n$ approximation of $X_{(n)}$, the orthogonal matrix ${U}^{(n)}\in \mathbb{R}^{I_n\times r_n}$ is the mode-${n}$ factor matrix of $\mathcal{ X}$ in Tucker decomposition, $ S^{(n)}\in\mathbb{R}^{r_n\times r_n}$ and ${V}^{(n)}\in \mathbb{R}^{ I_1...I_{n-1}I_{n+1}...I_N\times r_n}$. Once all factor matrices have been computed, the core tensor $\mathcal{G}$ can be computed as
	$$\mathcal{G}={\mathcal{X}\times}_1{U}^{\left(1\right)\top}\times_2{U}^{\left(2\right)\top}\cdots\times_N{U}^{\left(N\right)\top}\in \mathbb{R}^{r_1\times r_2\times ...\times r_N}.$$
	Then, the Tucker approximation $\hat{\mathcal{X}}$ of $\mathcal{X}$ can be computed as
	\begin{equation}
		\begin{split}
			\hat{\mathcal{X}} = & \ \mathcal{G}\times_1{U}^{(1)}\times_2{U}^{(2)}\cdots\times_N{U}^{(N)}  \\
			= & \ \mathcal{X}\times_1(U^{(1)}U^{(1)\top})\times_2(U^{(2)}U^{(2)\top})\cdots\times_N(U^{(N)}U^{(N)\top}).
		\end{split}
		\nonumber
	\end{equation}
	With the notation $\Delta_{n}^{2}(\mathcal{X}) \triangleq\sum_{i=r_{n}+1}^{I_{n}}\sigma_{i}^{2}({X}_{(n)})$ and $\Delta_{n}^2{(\mathcal{ X})\leq\|\mathcal{ X}-\hat{\mathcal{X}}_{\rm opt}\|_F^2}$ \cite{Minster et al2020}, the error-bound for Algorithm \ref{alg:THOSVD} can be stated in the following Theorem \ref{thmT}.

	\begin{theorem}[\cite{Vannieuwenhoven et al2012}, Theorem 5.1] \label{thmT}
		Let $\hat{\mathcal{X}}=\mathcal{G}\times_1U^{(1)}\times_2U^{(2)}\cdots\times_NU^{(N)}$ be the low multilinear rank-$(r_1,r_2,...,r_N)$ approximation of a tensor $\mathcal{X}\in\mathbb{R}^{I_1\times I_2\times\ldots\times I_N}$ by THOSVD. Then
		\begin{equation*}
			\begin{aligned}
				\|\mathcal{X}-\hat{\mathcal{X}}\|_{F}^{2}&\leq\sum_{n=1}^{N}\|\mathcal{X}\times_{n}({I}_{I_n}-{U}^{(n)}{U}^{(n)\top})\|_{F}^{2}=\sum_{n=1}^{N}\sum_{i=r_{n}+1}^{I_{n}}\sigma_{i}^{2}({X}_{(n)})\\
				&=\sum_{n=1}^{N}\Delta_{n}^{2}(\mathcal{X})\leq N\|\mathcal{ X}-\hat{\mathcal{X}}_{\rm opt}\|_F^2 .
			\end{aligned}
		\end{equation*}
	\end{theorem}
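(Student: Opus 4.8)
The plan is to use that THOSVD builds every factor matrix directly from $\mathcal{X}$, so with $P_n:=U^{(n)}U^{(n)\top}$ (the orthogonal projector onto the leading $r_n$ left singular vectors of $X_{(n)}$) and using that mode products in distinct modes commute, we may write $\hat{\mathcal{X}}=\mathcal{X}\times_1 P_1\times_2 P_2\cdots\times_N P_N$. Setting $\mathcal{X}_0:=\mathcal{X}$ and $\mathcal{X}_n:=\mathcal{X}_{n-1}\times_n P_n$, so that $\mathcal{X}_N=\hat{\mathcal{X}}$, I would first telescope
\[
\mathcal{X}-\hat{\mathcal{X}}=\sum_{n=1}^{N}(\mathcal{X}_{n-1}-\mathcal{X}_n)=\sum_{n=1}^{N}\mathcal{X}_{n-1}\times_n(I_{I_n}-P_n).
\]

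Next I would show the $N$ summands are mutually orthogonal in the Frobenius inner product. For $m<n$, the tensor $\mathcal{X}_{n-1}\times_n(I_{I_n}-P_n)=\mathcal{X}\times_1 P_1\cdots\times_{n-1}P_{n-1}\times_n(I_{I_n}-P_n)$ still carries the factor $P_m$ in mode $m$, so its mode-$m$ unfolding lies in the range of $P_m$; whereas the mode-$m$ unfolding of $\mathcal{X}_{m-1}\times_m(I_{I_m}-P_m)$ lies in the range of $I_{I_m}-P_m$. Since these ranges are orthogonal and the Frobenius inner product of two tensors equals that of any common unfolding, the cross terms vanish, giving the Pythagorean identity $\|\mathcal{X}-\hat{\mathcal{X}}\|_F^2=\sum_{n=1}^{N}\|\mathcal{X}_{n-1}\times_n(I_{I_n}-P_n)\|_F^2$.

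Then I would bound each term. Using commutativity, $\mathcal{X}_{n-1}\times_n(I_{I_n}-P_n)=\bigl(\mathcal{X}\times_n(I_{I_n}-P_n)\bigr)\times_1 P_1\cdots\times_{n-1}P_{n-1}$, and multiplying by an orthogonal projector in any mode is non-expansive in Frobenius norm (it is left multiplication by $P_j$ on the corresponding unfolding), hence $\|\mathcal{X}_{n-1}\times_n(I_{I_n}-P_n)\|_F\le\|\mathcal{X}\times_n(I_{I_n}-P_n)\|_F$; this gives the first inequality in the statement. Passing to the mode-$n$ unfolding, $\|\mathcal{X}\times_n(I_{I_n}-P_n)\|_F^2=\|(I_{I_n}-U^{(n)}U^{(n)\top})X_{(n)}\|_F^2$, and since $U^{(n)}$ holds the top $r_n$ left singular vectors this equals the residual of the best rank-$r_n$ approximation of $X_{(n)}$, i.e.\ $\sum_{i=r_n+1}^{I_n}\sigma_i^2(X_{(n)})=\Delta_n^2(\mathcal{X})$ by Eckart--Young, which is the middle equality. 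Finally, the already-quoted bound $\Delta_n^2(\mathcal{X})\le\|\mathcal{X}-\hat{\mathcal{X}}_{\rm opt}\|_F^2$ (each mode-$n$ unfolding of $\hat{\mathcal{X}}_{\rm opt}$ has rank at most $r_n$, so Eckart--Young for $X_{(n)}$ together with norm preservation of the unfolding bounds $\Delta_n^2(\mathcal{X})$ by $\|\mathcal{X}-\hat{\mathcal{X}}_{\rm opt}\|_F^2$), summed over the $N$ modes, produces the factor $N$ and closes the chain.

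I expect the orthogonality of the telescoping summands to be the one genuinely delicate point: it is what lets us keep $\sum_n\|\cdot\|_F^2$ rather than the worse $N\sum_n\|\cdot\|_F^2$ that a bare triangle inequality would give, and it hinges on tracking which projectors survive in which modes after commuting the mode products, plus the identification of the Frobenius inner product of tensors with that of their unfoldings. Everything else is routine bookkeeping with mode-$n$ products, non-expansiveness of orthogonal projectors, and Eckart--Young.
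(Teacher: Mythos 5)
Your proposal is correct: the telescoping decomposition, the mode-$m$ unfolding argument for mutual orthogonality of the summands, the non-expansiveness of the remaining projectors, Eckart--Young for the middle equality, and the bound $\Delta_n^2(\mathcal{X})\le\|\mathcal{X}-\hat{\mathcal{X}}_{\rm opt}\|_F^2$ all hold as you state them. The paper itself gives no proof of this theorem (it is quoted from Vannieuwenhoven et al., Theorem 5.1), and your argument is essentially the standard one used in that reference, so there is nothing further to reconcile.
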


	\subsection{Sequentially truncated higher-order SVD}
	Vannieuwenhoven et al.\cite{Vannieuwenhoven et al2012} proposed one more efficient and less computationally complex approach for computing  approximate Tucker decomposition of tensors, called STHOSVD. Unlike THOSVD algorithm, STHOSVD updates the core tensor simultaneously whenever a factor matrix has computed.
	
	Given the target rank $(r_1,r_2,\ldots,r_N)$ and a processing order $s_p: \{1,2,...,N\}$, the minimization problem \eqref{minf} can be formulated as the following optimization problem
	\begin{equation}
		\begin{aligned}\label{pro3}
			&\min_{{U}^{(1)},\cdots,{U}^{(N)}}\|\mathcal{X}-\mathcal{X}\times_1(U^{(1)}U^{(1)\top})\times_2(U^{(2)}U^{(2)\top})\cdots\times_N(U^{(N)}U^{(N)\top})\|_F^2\\	
			= & \min_{{U}^{(1)},\cdots,{U}^{(N)}}(\|\mathcal{ X}\times_1(I_1-U^{(1)}U^{(1)\top})\|_F^2+\|\hat{\mathcal{ X}}^{(1)}\times_2(I_2-U^{(2)}U^{(2)\top})\|_F^2 + \\
			& \quad \quad \cdots +\|\hat{\mathcal{ X}}^{(N-1)}\times_N(I_N-U^{(N)}U^{(N)\top})\|_F^2)\\
			= & \min_{U^{(1)}}(\|\mathcal{ X}\times_1(I_1-U^{(1)}U^{(1)\top})\|_F^2 +\min_{U^{(2)}}(\|\hat{\mathcal{ X}}^{(1)}\times_2(I_2-U^{(2)}U^{(2)\top})\|_F^2 + \\
			& \quad \quad \min_{U^{(3)}}(\cdots +\min_{U^{(N)}}\|\hat{\mathcal{ X}}^{(N-1)}\times_N(I_N-U^{(N)}U^{(N)\top})\|_F^2))),
		\end{aligned}
	\end{equation}
	{where $\hat{\mathcal{X}}^{(n)}=\mathcal{X}\times_1(U^{(1)}U^{(1)\top})\times_2(U^{(2)}U^{(2)\top})\cdots\times_n(U^{(n)}U^{(n)\top}), n=1,2,...,N-1$, denote the intermediate approximation tensors. }
	
	\begin{algorithm}
		\caption{STHOSVD}\label{alg:STHOSVD}
		
		\begin{algorithmic}[1]
			\Require tensor $\mathcal{X}\in\mathbb{R}^{I_1\times I_2\times\ldots\times I_N}$, target rank $(r_1,r_2,\ldots,r_N)$, and processing order $s_p:\{i_1,i_2,\ldots,i_N\}$
			\Ensure Tucker approximation $\hat{\mathcal{X}}=\mathcal{G}\times_1{U}^{(1)}\times_2{U}^{(2)}\ldots\times_N{U}^{(N)}$
			\State	 $\mathcal{G}\gets\mathcal{X}$
			\For {$n = i_1,i_2,\ldots,i_N$}
			\State $ (U^{(n)},S^{(n)},V^{(n)\top})\gets {\tt truncatedSVD} ({G}_{(n)},r_n)$
			\State      $\mathcal{G}\gets {\tt fold_{n}}(S^{(n)}V^{(n)\top})$ (\% forming the updated tensor from its mode-$n$ unfolding)
			\EndFor
		\end{algorithmic}
	\end{algorithm}
	
	In Algorithm \ref{alg:STHOSVD}, the solution $U^{(n)}$ of problem (\ref{pro3}) can be obtained via $ {\tt truncatedSVD}(G_{(n)},r_n)$, where $G_{(n)}$ is mode-$n$ unfolding matrix of the $(n-1)$-th intermediate core tensor $\mathcal{G}=\mathcal{ X}\times _{i=1}^{n-1}U^{(i)\top
	}\in \mathbb{R}^{r_1\times r_2\times ...\times r_{n-1}\times I_{n}\times ...\times I_{N} }$, i.e.,
	$$G_{(n)}=\begin{bmatrix}
		{U}^{(n)}	& \tilde{{U}^{(n)}}
	\end{bmatrix}\begin{bmatrix}
		S^{(n)}&  \\
		& \tilde{S^{(n)}}
	\end{bmatrix}\begin{bmatrix}
		{V}^{(n)\top}\\
		\tilde {{V}^{(n)\top}}
	\end{bmatrix}\cong{U}^{(n)} S^{(n)} {V}^{(n)\top},$$
	where the orthogonal matrix ${U}^{(n)}$ is the mode-${n}$ factor matrix, and $S^{(n)}{V}^{(n)\top}\in \mathbb{R}^{r_n\times r_1...r_{n-1}I_{n+1}...I_N}$ is
	used to update the $n$-th intermediate core tensor $\mathcal{G}$. {Function ${\tt fold_{n}}(S^{(n)}V^{(n)\top})$ tensorizes matrix $S^{(n)}{V}^{(n)\top}$ into  tensor $\mathcal{G}\in \mathbb{R}^{r_1\times r_2\times ...\times r_{n}\times I_{n+1}\times ...\times I_{N} }$.} When the target rank $r_n$ is much smaller than $I_n$, the size of the updated intermediate core tensor $\mathcal{G}$ is much smaller than original tensor. This method can significantly improve computational performance. STHOSVD algorithm possesses the following error-bound.
	
	\begin{theorem}[\cite{Vannieuwenhoven et al2012}, Theorem 6.5]\label{thmST}
		Let $\hat{\mathcal{X}}=\mathcal{G}\times_1U^{(1)}\times_2U^{(2)}\ldots\times_NU^{(N)}$ be the low multilinear rank-$(r_1,r_2,...,r_N)$ approximation of a tensor $\mathcal{X}\in\mathbb{R}^{I_1\times I_2\times\ldots\times I_N}$ by STHOSVD with processsing order $s_p:\{1,2,\ldots,N\}$. Then
		\begin{equation*}
			\begin{aligned}
				\|\mathcal{X}-\hat{\mathcal{X}}\|_{F}^{2}&=\sum_{n=1}^{N}\|\hat{\mathcal{X}}^{(n-1)}-\hat{\mathcal{X}}^{(n)}\|_{F}^{2}\leq\sum_{n=1}^{N}\|\mathcal{X}\times_{n}({I}_{I_n}-{U}^{(n)}{U}^{(n)\top})\|_{F}^{2}\\
				&=\sum_{n=1}^{N}\Delta_{n}^{2}(\mathcal{X})\leq N\|\mathcal{X}-\hat{\mathcal{X}}_{\rm opt}\|_F^2.
			\end{aligned}
		\end{equation*}
	\end{theorem}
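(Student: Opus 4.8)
The plan is to split the total residual $\mathcal{X}-\hat{\mathcal{X}}$ into a telescoping sum of per-mode residuals, to verify that these residuals are mutually orthogonal so that the squared error decomposes additively (a Pythagoras-type identity), and then to bound each summand by the corresponding tail energy $\Delta_n^2(\mathcal{X})$; the final inequality then follows term by term from the recorded fact $\Delta_n^2(\mathcal{X})\le\|\mathcal{X}-\hat{\mathcal{X}}_{\rm opt}\|_F^2$. Concretely, write $P_n:=U^{(n)}U^{(n)\top}$ and $R_n:=\hat{\mathcal{X}}^{(n-1)}-\hat{\mathcal{X}}^{(n)}$, with the conventions $\hat{\mathcal{X}}^{(0)}=\mathcal{X}$ and $\hat{\mathcal{X}}^{(N)}=\hat{\mathcal{X}}$. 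Each STHOSVD step contracts one more mode, $\hat{\mathcal{X}}^{(n)}=\hat{\mathcal{X}}^{(n-1)}\times_n P_n$, so $R_n=\hat{\mathcal{X}}^{(n-1)}\times_n(I_{I_n}-P_n)$ and $\mathcal{X}-\hat{\mathcal{X}}=\sum_{n=1}^{N}R_n$.

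For the orthogonality, fix $m<n$. Because the steps after mode $m$ act only on modes $>m$ and mode-$p$ contractions with $p\neq q$ commute, the tensor $\hat{\mathcal{X}}^{(n-1)}$ is still invariant under the mode-$m$ projector, $\hat{\mathcal{X}}^{(n-1)}=\hat{\mathcal{X}}^{(n-1)}\times_m P_m$, hence $R_n=R_n\times_m P_m$; on the other hand $R_m=R_m\times_m(I_{I_m}-P_m)$. Passing to mode-$m$ unfoldings gives $(R_m)_{(m)}=(I_{I_m}-P_m)(R_m)_{(m)}$ and $(R_n)_{(m)}=P_m(R_n)_{(m)}$, so $\langle R_m,R_n\rangle_F=\mathrm{tr}\big((R_m)_{(m)}^\top(I_{I_m}-P_m)P_m(R_n)_{(m)}\big)=0$. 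Therefore $\|\mathcal{X}-\hat{\mathcal{X}}\|_F^2=\sum_{n=1}^N\|R_n\|_F^2$, which is the first equality of the theorem.

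To bound a single $\|R_n\|_F^2$, note that the mode-$n$ unfolding of $\hat{\mathcal{X}}^{(n-1)}$ is $X_{(n)}\Pi_n$, where $\Pi_n$ is the Kronecker product of $P_1,\dots,P_{n-1}$ together with identity factors in the untouched modes; in particular $\Pi_n$ is an orthogonal projector, so $\|\Pi_n\|_2=1$. Since $U^{(n)}$ is the leading $r_n$-dimensional left singular subspace of the mode-$n$ unfolding of the intermediate core, and that matrix has the same Gram matrix as $X_{(n)}\Pi_n$ (they differ by right multiplication by a matrix with orthonormal rows), $U^{(n)}$ is also the dominant left singular subspace of $X_{(n)}\Pi_n$. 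Hence
\[
\|R_n\|_F^2=\|(I_{I_n}-P_n)X_{(n)}\Pi_n\|_F^2=\tau_{r_n+1}^2\big(X_{(n)}\Pi_n\big)=\sum_{i>r_n}\sigma_i^2\big(X_{(n)}\Pi_n\big).
\]
Multiplying on the right by the contraction $\Pi_n$ cannot increase any singular value, so $\sigma_i(X_{(n)}\Pi_n)\le\sigma_i(X_{(n)})$ for every $i$, whence $\|R_n\|_F^2\le\sum_{i>r_n}\sigma_i^2(X_{(n)})=\Delta_n^2(\mathcal{X})$; likewise $\|R_n\|_F^2\le\|(I_{I_n}-P_n)X_{(n)}\|_F^2=\|\mathcal{X}\times_n(I_{I_n}-U^{(n)}U^{(n)\top})\|_F^2$, which accounts for the intermediate member of the chain. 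Summing over $n$ and invoking $\Delta_n^2(\mathcal{X})\le\|\mathcal{X}-\hat{\mathcal{X}}_{\rm opt}\|_F^2$ yields the final bound $N\|\mathcal{X}-\hat{\mathcal{X}}_{\rm opt}\|_F^2$.

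I expect the main obstacle to be the orthogonality bookkeeping in the Pythagorean step: one must keep precise track of which modes of each partial residual live in the range of $P_m$ versus its orthogonal complement, and that the later mode-$n$ processing never disturbs the earlier modes. A secondary point that needs care is the identification $(\hat{\mathcal{X}}^{(n-1)})_{(n)}=X_{(n)}\Pi_n$ together with the claim that the STHOSVD factor $U^{(n)}$ spans the dominant left singular subspace of this matrix; once these are established, the singular-value monotonicity under $\Pi_n$ and the term-by-term comparison with $\|\mathcal{X}-\hat{\mathcal{X}}_{\rm opt}\|_F^2$ are routine.
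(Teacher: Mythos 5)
The paper itself offers no proof of this theorem: it is quoted directly from Vannieuwenhoven et al.\ (their Theorem 6.5). Your argument is correct and essentially reconstructs the standard proof from that reference: telescoping the error over the processing order, the Pythagorean identity coming from mutual orthogonality of the per-mode residuals (later steps leave the earlier mode-$m$ range of $P_m$ invariant while $R_m$ lives in its orthogonal complement), the identification $(\hat{\mathcal{X}}^{(n-1)})_{(n)}=X_{(n)}\Pi_n$ together with the observation that $U^{(n)}$ spans the dominant left singular subspace of this matrix (same left Gram matrix as the unfolded core), and the contraction bound $\sigma_i(X_{(n)}\Pi_n)\le\sigma_i(X_{(n)})$; each of these steps checks out. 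One remark: you establish the two inequalities $\|R_n\|_F^2\le\|\mathcal{X}\times_n(I_{I_n}-U^{(n)}U^{(n)\top})\|_F^2$ and $\|R_n\|_F^2\le\Delta_n^2(\mathcal{X})$ separately, but not the literal middle equality $\sum_{n}\|\mathcal{X}\times_n(I_{I_n}-U^{(n)}U^{(n)\top})\|_F^2=\sum_n\Delta_n^2(\mathcal{X})$ displayed in the statement; that equality is in fact only guaranteed for the THOSVD factors of Theorem \ref{thmT}, since for STHOSVD the $U^{(n)}$ are dominant singular subspaces of $X_{(n)}\Pi_n$ rather than of $X_{(n)}$, so in general one only has ``$\ge$'' there. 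Your route of bounding each $\|R_n\|_F^2$ directly by $\Delta_n^2(\mathcal{X})$ is therefore the sound reading of the chain and proves the meaningful content of the theorem.
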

	
	Although STHOSVD has the same error-bound as THOSVD, it is less computationally complex and requires less storage. As shown in Section \ref{Sec:exp} for the numerical experiment, the running (CPU) time of the STHOSVD algorithm is significantly reduced, and the approximation error has slightly better than that of THOSVD in some cases.
	
	\subsection{Randomized STHOSVD}
	When the dimensions of data tensors are enormous, the computational cost of
	the classical deterministic algorithm TSVD for finding a low-rank approximation of mode-$n$ unfolding matrix can be expensive. Randomized low-rank matrix algorithms replace original large-scale matrix with a new one through a preprocessing step. The new matrix contains as much information as possible about the rows or columns of original data matrix. Its size is smaller than original matrix, allowing the data matrix to be processed efficiently and thus reducing the memory requirements for solving low-rank approximation of large matrix.
	
	\begin{algorithm}[H]
		\caption{R-SVD}
		\label{alg:R-SVD}
		\begin{algorithmic}[1]
			\Require  matrix ${A}\in\mathbb{R}^{m\times n}$, target rank $r$, and oversampling parameter $p\geq0$
			\Ensure low-rank approximation matrix $\hat{A}=\hat{U}\hat{S}\hat{V}^\top$ of $A$
			\State  $\Omega\gets {\tt randn}(n,r+p)$
			\State  ${Y}\gets {A}\Omega$
			\State $({Q},\sim)\gets {\tt thinQR}({Y})$
			\State ${B}\gets Q^{\top}A$
			\State {(${U},S, {V}^\top)\gets {\tt thinSVD}(B)$}
			\State $\hat{U}\gets QU(:,1:r)$
			\State $\hat{S}\gets S(1:r,1:r),\hat{V}\gets V(:,1:r)$
		\end{algorithmic}
	\end{algorithm}
	
	N. Halko et al. \cite{Halko et al2011} proposed a randomized SVD (R-SVD) for matrices. The preprocessing stage of the algorithm is performed by right multiplying original data matrix ${A}\in\mathbb{R}^{m\times n}$ with a random Gaussian matrix $\Omega\in \mathbb{R}^{n\times r}$. Each column of the resulting new matrix $Y=A\Omega\in \mathbb{R}^{m\times r}$ is a linear combination of the columns of original data matrix. When $r<n$, the size of matrix $Y$ is smaller than $A$. The oversampling technique can improve the accuracy of solutions. Subsequent computations are summarised in Algorithm \ref{alg:R-SVD}, where {\tt randn} generates a Gaussian random matrix, {\tt thinQR} produces an economy-size of the QR decomposition, and {\tt thinSVD} is the thin SVD decomposition. When $A$ is dense, the arithmetic cost of Algorithm \ref{alg:R-SVD} is $\mathcal{O}(2(r+p)mn+r^2(m+n))$ flops, where $p>0$ is the oversampling parameter satisfying $r+p\leq \min\{m,n\}$.
	
	Algorithm \ref{alg:R-SVD} is an efficient randomized algorithm for computing rank-$r$ approximations to matrices. Minster et al. \cite{Minster et al2020} applied Algorithm \ref{alg:R-SVD} directly to the STHOSVD algorithm and then presented a randomized version of STHOSVD (i.e., R-STHOSVD), see Algorithm \ref{alg:R-STHOSVD}.
	\begin{algorithm}[H]
		
		\caption{R-STHOSVD}
		\label{alg:R-STHOSVD}
		\begin{algorithmic}[1]
			\Require tensor $\mathcal{X}\in\mathbb{R}^{I_1\times I_2\times\ldots\times I_N}$, targer rank $(r_1,r_2,\ldots,r_N)$, processing order $s_p:\{i_1,i_2,\ldots,i_N\}$, and oversampling parameter $p\geq 0$
			\Ensure Tucker approximation $\hat{\mathcal{X}}=\mathcal{G}\times_1{U}^{\left(1\right)}\times_2{U}^{\left(2\right)}\ldots\times_N{U}^{\left(N\right)}$
			\State $\mathcal{G}\gets\mathcal{X}$
			\For{$n = i_1,i_2,\ldots,i_N$}
			\State  $(\hat{U}, \hat{S}, \hat{V}^\top)\gets$ \textbf{R-SVD}$({G}_{(n)}, r_n, p)$ (cf. Algorithm \ref{alg:R-SVD})
			\State  	  ${U}^{\left(n\right)}\gets {\hat{U}}$
			\State  $\mathcal{G}\gets {\tt fold_{n}}(\hat{S}\hat{V}^\top)$
			\EndFor
			
		\end{algorithmic}
	\end{algorithm}

	\section{Sketching algorithm for STHOSVD} \label{Sec:sketch}
	A drawback of R-SVD algorithm is that when both dimensions of the intermediate matrices are enormous, the computational cost can still be high. To resolve this problem, we could resort to the two-sided sketching algorithm for low-rank matrix approximation proposed by Joel A. Tropp et al. \cite{Tropp et al2017}. The preprocessing of sketching algorithm needs two sketch matrices to contain information regarding the rows and columns of input matrix ${A}\in\mathbb{R}^{m\times n}$. Thus we should choose two sketch size parameters $k$ and $l$, s.t. , $r\le k\le \min\{l, n\}$, $0< l\le m$. The random matrices $\Omega\in\mathbb{R}^{n\times k}$ and $\Psi \in\mathbb{R}^{l\times m}$ are fixed independent standard normal matrices.
	Then we can multiply matrix ${A}$ left and right respectively to obtain random sketch matrices ${Y}\in\ \mathbb{R}^{m\times k}$ and ${W}\in \mathbb{R}^{l\times n}$, which collect sufficient data about the input matrix to compute the low-rank approximation. The dimensionality and distribution of the random sketch matrices determine the approximation's potential accuracy, with larger values of $k$ and $l$ resulting in better approximations but also requiring more storage and computational cost.
	
	\begin{algorithm}
		\caption{{\bf Sketch} for low-rank approximation}
		\label{alg:Sketch}
		\begin{algorithmic}[1]
			\Require  matrix ${A}\in\mathbb{R}^{m\times n}$, and sketch size parameters $k,l$
			\Ensure  rank-$k$ approximation $\hat{{A}}={Q}{X}$ of $A$
			\State $\Omega\gets {\tt randn}(n,k), \Psi\gets {\tt randn}(l,m)$
			\State $\Omega\gets {\tt orth}(\Omega), {\Psi^\top \gets {\tt orth}(\Psi^\top)}$
			\State${Y}\gets {A} \Omega$
			\State${W}\gets\Psi {A}$
			\State $({Q},\sim)\gets {\tt thinQR}({Y})$
			\State${X}\gets (\Psi {Q})^\dag {W}$
		\end{algorithmic}
	\end{algorithm}
	
	The sketching algorithm for low-rank approximation is given in Algorithm \ref{alg:Sketch}. Function ${\tt orth}(A)$ in Step 2 produces an orthonormal basis of A. Using orthogonalization matrices will achieve smaller errors and better numerical stability than directly using the randomly generated Gaussian matrices. In particular, when $A$ is dense, the arithmetic cost of Algorithm \ref{alg:Sketch} is $\mathcal{O}((k+l)mn+kl(m+n))$ flops. Algorithm \ref{alg:Sketch} is simple, practical, and possesses the sub-optimal error-bound as stated in the following Theorem \ref{thmSketch}.
	In Theorem \ref{thmSketch}, function $f(s,t):=s/(t-s-1)(t>s+1>1)$. The minimum in Theorem \ref{thmSketch} reveals that the low rank approximation of given matrix $A$ automatically exploits the decay of tail energy.
	
	\begin{theorem} [\cite{Tropp et al2017}, Theorem 4.3] \label{thmSketch}
		Assume that the sketch size parameters satisfy $l>k+1$, and draw random test matrices $\Omega\in\mathbb{R}^{n\times k}$ and $\Psi{\in\mathbb{R}}^{l\times m}$ independently forming the standard normal distribution. Then the rank-$k$ approximation $\hat{{A}}$ obtained from Algorithm \ref{alg:Sketch} satisfies
		\begin{equation}\nonumber
			\begin{aligned}
				\mathbb{E}\parallel {A}-\hat{{A}}\parallel_F^2&\le(1+f(k,l))\cdot\min_{\varrho<k-1}(1+f(\varrho,k))\cdot\tau_{\varrho+1}^2({A})\\
				&=\frac{k}{l-k-1}\cdot\min_{\varrho<k-1}\frac{k}{k-\varrho-1}\cdot\tau_{\varrho+1}^2({A}).
			\end{aligned}
		\end{equation}
	\end{theorem}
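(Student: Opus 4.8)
My plan is to decouple the two sources of randomness — the column test matrix $\Omega$ (equivalently the orthonormal factor $Q$) and the row test matrix $\Psi$ — and to process them in reverse order: first condition on $\Omega$ and average over $\Psi$, then average over $\Omega$ using the classical randomized range-finder bound of Halko--Martinsson--Tropp.

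First I would record an exact error identity with $Q$ held fixed. Since $\hat A = QX = Q(\Psi Q)^\dagger W = Q(\Psi Q)^\dagger\Psi A$ and $(\Psi Q)^\dagger(\Psi Q) = I_k$ almost surely (the $l\times k$ Gaussian $\Psi Q$ has full column rank because $l>k$), a short computation with $E:=(I-QQ^\top)A$ shows $Q(\Psi Q)^\dagger\Psi(QQ^\top A)=QQ^\top A$, hence $A-\hat A = E - Q(\Psi Q)^\dagger\Psi E$. The columns of $E$ lie in $\mathrm{range}(Q)^\perp$ and those of $Q(\Psi Q)^\dagger\Psi E$ in $\mathrm{range}(Q)$, so the Pythagorean identity in the Frobenius norm together with $\|QM\|_F=\|M\|_F$ yields $\|A-\hat A\|_F^2=\|E\|_F^2+\|(\Psi Q)^\dagger\Psi E\|_F^2$. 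Next I would average over $\Psi$: because $\mathrm{range}(Q)$ is orthogonal to the column space of $E$, the Gaussian blocks $\Psi Q$ and $\Psi E$ are independent, so conditioning on $G:=\Psi Q$ and using $\mathbb{E}_\Psi[\Psi M\Psi^\top]=(\mathrm{tr}\,M)\,I_l$ gives $\mathbb{E}[\|G^\dagger\Psi E\|_F^2\mid G]=\|E\|_F^2\,\|G^\dagger\|_F^2$; then, since $G$ is $l\times k$ standard normal with $l>k+1$, the inverse-Wishart moment $\mathbb{E}\,\mathrm{tr}((G^\top G)^{-1})=k/(l-k-1)=f(k,l)$ produces
\[
\mathbb{E}_\Psi\big[\,\|A-\hat A\|_F^2 \mid \Omega\,\big]=\bigl(1+f(k,l)\bigr)\,\|(I-QQ^\top)A\|_F^2 .
\]

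Finally I would average over $\Omega$, which amounts to bounding the error of the randomized range finder $\mathbb{E}_\Omega\|(I-QQ^\top)A\|_F^2$ with $\mathrm{range}(Q)=\mathrm{range}(A\Omega)$ and $\Omega$ an $n\times k$ standard normal matrix. Fixing $\varrho<k-1$, taking the SVD $A=U\Sigma V^\top$, splitting $\Sigma=\mathrm{diag}(\Sigma_1,\Sigma_2)$ at index $\varrho$, and setting $\Omega_1=V_1^\top\Omega$, $\Omega_2=V_2^\top\Omega$ (independent standard normals by orthogonal invariance), the Halko--Martinsson--Tropp deterministic bound gives $\|(I-QQ^\top)A\|_F^2\le\|\Sigma_2\|_F^2+\|\Sigma_2\Omega_2\Omega_1^\dagger\|_F^2$ on the full-rank event; applying $\mathbb{E}_{\Omega_2}$ via $\mathbb{E}\|B\Omega_2 C\|_F^2=\|B\|_F^2\|C\|_F^2$ and then $\mathbb{E}_{\Omega_1}$ via the inverse-Wishart moment (now requiring $k>\varrho+1$) yields $\mathbb{E}_\Omega\|(I-QQ^\top)A\|_F^2\le(1+f(\varrho,k))\,\|\Sigma_2\|_F^2=(1+f(\varrho,k))\,\tau_{\varrho+1}^2(A)$, where the last equality uses the Tail-energy definition. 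Minimizing over $\varrho$ and combining with the previous display gives the stated bound, the displayed equality there being merely $1+f(\cdot,\cdot)$ rewritten as a single fraction.

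I expect the main obstacle to be the bookkeeping around the two independent Gaussian blocks: one must notice that the columns of $E$ are orthogonal to $\mathrm{range}(Q)$ (which is what makes the two-sided sketch analyzable, since it forces $\Psi Q$ and $\Psi E$ to decouple), invoke the inverse-Wishart trace identity correctly — this is the only place the hypotheses $l>k+1$ and $\varrho<k-1$ enter, and the bound is vacuous otherwise — and carry out everything on the almost-sure events where $\Psi Q$ and $\Omega_1$ have full rank. The deterministic range-finder inequality used in the last step, which manipulates pseudoinverses of the Gaussian blocks $\Sigma_1\Omega_1$ and $\Sigma_2\Omega_2$, is the most technical ingredient, but it can be imported verbatim from the Halko--Martinsson--Tropp analysis rather than re-derived.
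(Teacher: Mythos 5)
Your proposal is correct and follows essentially the same route the paper uses: the exact decomposition $\|A-\hat A\|_F^2=\|(I-QQ^\top)A\|_F^2+\|X-Q^\top A\|_F^2$ (the paper's Lemma~\ref{lem1}), averaging over $\Psi$ via the independence of $\Psi Q$ and $\Psi(I-QQ^\top)A$ and the inverse-Wishart trace moment to get the factor $1+f(k,l)$ (Lemma~\ref{lem2}), and then the Halko--Martinsson--Tropp range-finder bound in expectation over $\Omega$, minimized over $\varrho<k-1$ --- which is exactly how the paper (following Tropp et al.) proves the analogous bound in the Appendix, with the power-iteration factor $\varpi_k^{4q}$ absent here. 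The only nitpick is your closing remark that the paper's second displayed line is ``merely $1+f$ rewritten as a single fraction'': in fact $1+f(k,l)=(l-1)/(l-k-1)$ and $1+f(\varrho,k)=(k-1)/(k-\varrho-1)$, so the stated ``equality'' to $\frac{k}{l-k-1}\cdot\frac{k}{k-\varrho-1}$ is a typo in the statement rather than an identity; your proof of the first (inequality) line is the substantive and correct part.
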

	
	Using the two-sided sketching algorithm to leverage STHOSVD algorithm, we propose a practical sketching algorithm for STHOSVD named Sketch-STHOSVD. We summarize the procedures of Sketch-STHOSVD algorithm in Algorithm \ref{alg:Sketch-STHOSVD}, with its error analysis  stated in Theorem \ref{thm:Sketch-STHOSVD}.
	
	\begin{algorithm}[H]
		\caption{Sketch-STHOSVD}
		\label{alg:Sketch-STHOSVD}
		\begin{algorithmic}[1]
			\Require tensor $\mathcal{X}\in\mathbb{R}^{I_1\times I_2\times\ldots\times I_N}$, targer rank $(r_1,r_2,\ldots,r_N)$, processing order $s_p:\{i_1,i_2,\ldots,i_N\}$, and sketch size parameters $\{l_1,l_2,...,l_N\}$
			\Ensure Tucker approximation $\hat{\mathcal{X}}=\mathcal{G}\times_1{U}^{\left(1\right)}\times_2{U}^{\left(2\right)}\ldots\times_N{U}^{\left(N\right)}$
			\State $\mathcal{G}\gets\mathcal{X}$
			\For{ $n = i_1,i_2,\ldots,i_N$ }
			\State  $({Q},{X})\gets$ \textbf{Sketch}$({G}_{(n)},r_n,l_n)$ (cf. Algorithm \ref{alg:Sketch})
			\State  	  $  {U}^{\left(n\right)}\gets {Q}$
			\State  $\mathcal{G}\gets {\tt fold_{n}}(X)$
			\EndFor
			
		\end{algorithmic}
	\end{algorithm}
	
	\begin{theorem} \label{thm:Sketch-STHOSVD}
		Let $\hat{\mathcal{X}}=\mathcal{G}\times_1{U}^{(1)}\times_2{U}^{(2)}\ldots\times_N{U}^{(N)}$ be the Tucker approximation of a tensor $\mathcal{X}\in\mathbb{R}^{I_1\times I_2\times\ldots\times I_N}$ by the Sketch-STHOSVD algorithm (i.e., Algorithm \ref{alg:Sketch-STHOSVD}) with target rank $r_n<I_n, n=1,2,...,N$, sketch size parameters $\{l_1,l_2,...,l_N\}$ and processing order $s_p:\{1,2,\ldots,N\}$. Then
		\begin{equation*}
			\begin{aligned}
				\mathbb{E}_{\{\Omega _j\}_{j = 1}^N}\|\mathcal {X} - \widehat {\mathcal {X}}\|_F^2&\le  {\sum\limits_{n = 1}^N \frac{r_n}{l_n-r_n-1}\min_{\varrho_n<r_n-1}\frac{r_n}{r_n-\varrho_n-1}\Delta _n^2({\cal X})}\\
				&\le  \sum\limits_{n = 1}^N \frac{r_n}{l_n-r_n-1}\min_{\varrho_n<r_n-1}\frac{r_n}{r_n-\varrho_n-1}\|\mathcal{X}-\hat{\mathcal{X}}_{\rm opt}\|_F^2.
			\end{aligned}	
		\end{equation*}
	\end{theorem}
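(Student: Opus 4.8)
The plan is to mirror the error analysis behind Theorem \ref{thmST} for STHOSVD, replacing the exact truncated SVD at each mode by the two-sided sketch of Algorithm \ref{alg:Sketch} and the deterministic residual bound by the sketching bound of Theorem \ref{thmSketch}. First I would unwind the recursion in Algorithm \ref{alg:Sketch-STHOSVD}: writing $\mathcal{G}^{(n)}$ for the intermediate core after mode $n$ has been processed, $Q_n=U^{(n)}$ for the computed factor, and $P_n:=Q_n(\Psi_n Q_n)^{\dagger}\Psi_n$ for the idempotent rank-$r_n$ reconstruction map that Algorithm \ref{alg:Sketch} applies, a short induction (using that mode products in distinct modes commute) identifies the output as $\widehat{\mathcal{X}}=\widehat{\mathcal{X}}^{(N)}$, where $\widehat{\mathcal{X}}^{(0)}=\mathcal{X}$ and $\widehat{\mathcal{X}}^{(n)}=\widehat{\mathcal{X}}^{(n-1)}\times_n P_n$. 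This telescopes to $\mathcal{X}-\widehat{\mathcal{X}}=\sum_{n=1}^{N}\widehat{\mathcal{X}}^{(n-1)}\times_n(I_{I_n}-P_n)$, and --- exactly as in the STHOSVD proof --- the aim is the Pythagorean-type identity $\|\mathcal{X}-\widehat{\mathcal{X}}\|_F^2=\sum_{n=1}^{N}\|\widehat{\mathcal{X}}^{(n-1)}\times_n(I_{I_n}-P_n)\|_F^2$.

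Next I would bound the $n$-th summand. Since $\widehat{\mathcal{X}}^{(n-1)}=\mathcal{G}^{(n-1)}\times_1 U^{(1)}\times_2\cdots\times_{n-1}U^{(n-1)}$ with $U^{(1)},\dots,U^{(n-1)}$ having orthonormal columns, mode products by these factors preserve the Frobenius norm, so $\|\widehat{\mathcal{X}}^{(n-1)}\times_n(I_{I_n}-P_n)\|_F=\|(I_{I_n}-P_n)G^{(n-1)}_{(n)}\|_F$, where $G^{(n-1)}_{(n)}$ is the mode-$n$ unfolding of $\mathcal{G}^{(n-1)}$; and $(I_{I_n}-P_n)G^{(n-1)}_{(n)}$ is precisely the residual of Algorithm \ref{alg:Sketch} run on $G^{(n-1)}_{(n)}$ with sketch sizes $k=r_n$, $l=l_n$. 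Conditioning on $\{\Omega_j,\Psi_j\}_{j<n}$ (so that $G^{(n-1)}_{(n)}$ is frozen while $\Omega_n,\Psi_n$ remain fresh standard normal matrices), Theorem \ref{thmSketch} gives
\[
\mathbb{E}_{\Omega_n,\Psi_n}\!\big[\,\|\widehat{\mathcal{X}}^{(n-1)}\times_n(I_{I_n}-P_n)\|_F^2 \,\big|\, \{\Omega_j,\Psi_j\}_{j<n}\,\big] \;\le\; \frac{r_n}{l_n-r_n-1}\,\min_{\varrho_n<r_n-1}\frac{r_n}{r_n-\varrho_n-1}\,\tau_{\varrho_n+1}^2\!\big(G^{(n-1)}_{(n)}\big).
\]
Taking the outer expectation over $\{\Omega_j,\Psi_j\}_{j<n}$, using the tower property, bounding $\tau_{\varrho_n+1}^2(G^{(n-1)}_{(n)})\le\Delta_n^2(\mathcal{X})$ (because $\mathcal{G}^{(n-1)}$ is produced from $\mathcal{X}$ by operations acting only on modes $\neq n$, which should not enlarge the mode-$n$ singular values), summing over $n$, and finally invoking the bound $\Delta_n^2(\mathcal{X})\le\|\mathcal{X}-\widehat{\mathcal{X}}_{\rm opt}\|_F^2$ recalled in Section \ref{Sec:pre} yields the two displayed inequalities.

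I expect the main obstacle to be that $P_n$ is an \emph{oblique} (non-symmetric) projector, so two ingredients that the STHOSVD template uses for free require genuine work here. First, the cross terms $\langle\widehat{\mathcal{X}}^{(m-1)}\times_m(I-P_m),\,\widehat{\mathcal{X}}^{(n-1)}\times_n(I-P_n)\rangle$ for $m\neq n$ are governed by $(I-P_m)^{\top}P_m$, which vanishes only when $P_m$ is symmetric; so the clean ``sum of squares'' above is not automatic and must be obtained either by estimating these cross terms directly or by splitting $P_n=Q_nQ_n^{\top}+(P_n-Q_nQ_n^{\top})$ and separately controlling the oblique remainder $P_n-Q_nQ_n^{\top}$. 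Second, the maps $(\Psi_j Q_j)^{\dagger}\Psi_j$ acting in modes $j<n$ need not be contractions (unlike the orthonormal-column compressions in the deterministic STHOSVD analysis), so the passage $\tau_{\varrho_n+1}^2(G^{(n-1)}_{(n)})\le\Delta_n^2(\mathcal{X})$ really needs an argument controlling, in expectation, how the earlier sketched projections act on the singular values of the mode-$n$ unfolding. Carrying both of these past the ``oblique'' difficulty without degrading the constants is the crux; everything else is routine bookkeeping along STHOSVD lines.
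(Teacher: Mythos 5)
Your skeleton coincides with the paper's proof of Theorem \ref{thm:Sketch-STHOSVD}: telescope the error along the processing order, use the tower property to condition on the first $n-1$ sketches, reduce the mode-$n$ increment (via the orthonormal columns of $U^{(1)},\dots,U^{(n-1)}$) to a matrix-sketching residual for $G^{(n-1)}_{(n)}$, apply Theorem \ref{thmSketch} conditionally, bound the resulting tail energy by $\Delta_n^2(\mathcal{X})$, and finish with $\Delta_n^2(\mathcal{X})\le\|\mathcal{X}-\hat{\mathcal{X}}_{\rm opt}\|_F^2$. The difference lies entirely at the two points you flag as the crux, and there the paper does \emph{not} do the extra work you anticipate: in its chain the mode-$n$ increment is written as $\|\mathcal{G}^{(n-1)}\times_{i=1}^{n-1}U^{(i)}\times_n(I-U^{(n)}U^{(n)\top})\|_F^2$, i.e.\ with the orthogonal projector $Q_nQ_n^\top$ rather than your oblique $P_n=Q_n(\Psi_nQ_n)^\dagger\Psi_n$, and both the sum-of-squares identity and the inequality $\tau_{\varrho_n+1}^2(G^{(n-1)}_{(n)})\le\Delta_n^2(\mathcal{X})$ are then carried over verbatim from the deterministic STHOSVD analysis (Theorem \ref{thmST}); Theorem \ref{thmSketch} still furnishes the conditional bound because $\|(I-QQ^\top)A\|_F^2\le\|A-QX\|_F^2$ by the decomposition in Lemma \ref{lem1}. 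In effect the published argument analyzes the variant in which each core update is $Q_n^\top G^{(n-1)}_{(n)}$, so no cross-term or non-contraction argument appears there at all.

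Relative to that, your proposal is the more faithful reading of Algorithm \ref{alg:Sketch-STHOSVD} as stated (whose core update is $X_n=(\Psi_nQ_n)^\dagger\Psi_n G^{(n-1)}_{(n)}$), and the two obstacles you identify are genuine for that reading: $(I-P_m)^\top P_m\neq0$ for an oblique projector, so the Pythagorean identity is not automatic, and $\|(\Psi_jQ_j)^\dagger\Psi_j\|_2$ can exceed one, so the mode-$n$ singular values of the intermediate cores need not be dominated by those of $X_{(n)}$. Since your write-up leaves both steps open (``must be obtained either by\ldots'', ``really needs an argument''), it is an outline with a gap rather than a proof. The quickest way to land exactly the stated bound is the paper's route: run the induction with the orthogonally projected intermediates $\hat{\mathcal{X}}^{(n)}=\mathcal{X}\times_1(U^{(1)}U^{(1)\top})\cdots\times_n(U^{(n)}U^{(n)\top})$ (equivalently, core updates $Q_n^\top G^{(n-1)}_{(n)}$), for which every step of your plan closes with no additional work. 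If instead you insist on the literal output of Algorithm \ref{alg:Sketch-STHOSVD} with the oblique $P_n$, be aware that the paper does not supply the missing cross-term and contraction arguments either; you would genuinely have to prove them (e.g., by controlling $\|(\Psi_nQ_n)^\dagger\|_2$ in expectation), and you should expect weaker constants than those displayed in the theorem.
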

	\begin{proof}  	
		Combining Theorem \ref{thmST} and Theorem \ref{thmSketch}, we have
		\begin{equation*}
			\begin{aligned}
				& \ \mathbb{E}_{\{\Omega _j\}_{j = 1}^N}\|\mathcal {X} - \widehat {\mathcal {X}}\|_F^2 \\
				= & \sum \limits_{n = 1}^N\mathbb{E}_{\{\Omega _j\}_{j = 1}^N} \|\hat{\mathcal {X}}^{(n - 1)} - \hat{\mathcal {X}}^{(n )}\|_F^2\\
				= &  \sum \limits_{n = 1}^N \mathbb{E}_{\{\Omega _j\}_{j = 1}^{n-1}}\left\{\mathbb{E}_{\Omega _n}\|\hat{\mathcal {X}}^{(n - 1)} - \hat{\mathcal {X}}^{(n)}\|_F^2 \right\}\\
				= & \sum \limits_{n = 1}^N \mathbb{E}_{\{\Omega _j\}_{j = 1}^{n-1}}\left\{\mathbb{E}_{\Omega _n}\|\mathcal {G}^{(n - 1)} \times _{i = 1}^{n - 1} {{U}^{(i)}}{ \times _n}({I} - {{U}^{(n)}}{U}^{(n)\top})
				\|_F^2 \right\}\\
				\le &  \sum \limits_{n = 1}^N \mathbb{E}_{\{\Omega _j\}_{j = 1}^{n-1}}\left\{\mathbb{E}_{\Omega _n}\|({I} - {{U}^{(n)}}{U}^{(n)\top}){G}_{n}^{n-1})
				\|_F^2 \right\}\\
				\le &  \sum \limits_{n = 1}^N \mathbb{E}_{\{\Omega _j\}_{j = 1}^{n-1}}\frac{r_n}{l_n-r_n-1}\min_{\varrho_n<r_n-1}\frac{r_n}{r_n-\varrho_n-1}\sum \limits_{i = r_n+1}^{I_n}\sigma_{i}^{2}({G}_{(n)}^{(n-1)})\\
				\le &  \sum\limits_{n = 1}^N \mathbb{E}_{\{\Omega _j\}_{j = 1}^{n-1}} \frac{r_n}{l_n-r_n-1}\min_{\varrho_n<r_n-1}\frac{r_n}{r_n-\varrho_n-1} \Delta _n^2({\cal X})\\
				= & \sum\limits_{n = 1}^N \frac{r_n}{l_n-r_n-1}\min_{\varrho_n<r_n-1}\frac{r_n}{r_n-\varrho_n-1}\Delta _n^2({\cal X})\\
				\le &  {\sum\limits_{n = 1}^N \frac{r_n}{l_n-r_n-1}\min_{\varrho_n<r_n-1}\frac{r_n}{r_n-\varrho_n-1}} \|\mathcal{X}-\hat{\mathcal{X}}_{\rm opt}\|_F^2.
			\end{aligned}
		\end{equation*}
	\end{proof}
	
	We assume the processing order for STHOSVD, R-STHOSVD, and Sketch-STHOSVD algorithms is $s_p:\{1,2,...,N\}$. Table \ref{tab2} summarises the arithmetic cost of different algorithms for the cases related to the general higher-order tensor $\mathcal{ X}\in \mathbb{R}^{I_1\times I_{2}\times...\times I_N}$ with target rank $(r_1,r_2,\ldots,r_N)$ and the special cubic tensor $\mathcal{ X}\in \mathbb{R}^{I\times I\times...\times I}$ with target rank $(r,r,...,r)$. Here the tensors are dense and the target ranks $r_j\ll I_j, j = 1, 2, \ldots, N$.
	
	\begin{table}[h]
		\begin{center}
			\begin{minipage}{\textwidth}
				\caption{Arithmetic cost for the algorithms THOSVD, STHOSVD, R-STHOSVD, and the proposed Sketch-STHOSVD.} \label{tab2}
				\tiny{
					\begin{tabular*}
						{\textwidth}{@{\extracolsep{\fill}}lcccccc@{\extracolsep{\fill}}}
						\toprule%
						Algorithm & $\mathcal{ X}\in \mathbb{R}^{I_1\times I_{2}\times...\times I_N}$ & $\mathcal{ X}\in \mathbb{R}^{I\times I\times...\times I}$ \\
						\midrule
						THOSVD  & $\mathcal{O}(\sum \limits_{j=1}^{N}I_{j}I_{1:N}+\sum_{j=1}^{N}r_{1:j}I_{j:N})$ & $\mathcal{O}(NI^{N+1}+\sum \limits_{j=1}^{N}r^jI^{N-j+1})$\\
						STHOSVD   & $\mathcal{O}(\sum \limits_{j=1}^{N}I_{j}r_{1:j-1}I_{j:N}+\sum \limits_{j=1}^{N}r_{1:j}I_{j+1:N})$  & $\mathcal{O}(\sum \limits_{j=1}^{N}r^{j-1}I^{N-j+2}+r^jI^{N-j})$ \\
						R-STHOSVD   & $\mathcal{O}(\sum \limits_{j=1}^{N}r_{1:j}I_{j:N}+\sum \limits_{j=1}^{N}r_{1:j}I_{j+1:N})$  & $\mathcal{O}(\sum \limits_{j=1}^{N}r^{j}I^{N-j+1}+r^jI^{N-j})$\\
						Sketch-STHOSVD  &$\mathcal{O}(\sum \limits_{j=1}^{N}r_jl_j(I_{j}+r_{1:j-1}I_{j+1:N})+\sum \limits_{j=1}^{N}r_{1:j}I_{j+1:N})$   & $\mathcal{O}(\sum \limits_{j=1}^{N}rl(I+r^{j-1}I^{N-j})+r^jI^{N-j})$\\
						\botrule
				\end{tabular*}}
			\end{minipage}
		\end{center}
	\end{table}

	\section{{Sketching algorithm with subspace power iteration}} \label{Sec:sketch-alg}
	When the size of original matrix is very large or the singular spectrum of original matrix decays slowly, Algorithm \ref{alg:Sketch} may produce a poor basis in many applications. {Inspired by \cite{Rokhlin et al2009}, we suggest using the power iteration technique to enhance the sketching algorithm by replacing ${A}$ with $({A}{A}^\top)^{q}{A}$, where $q$ is a positive integer.} According to the SVD decomposition of matrix ${A}$, i.e., ${A}={U}{S}{V}^\top$, we know that $({A}{A}^\top)^{q}{A}={U}{S}^{2q+1}{V}^\top$. It can see that ${A}$ and $({A}{A}^\top)^{q}{A}$ have the same left and right singular vectors, but the latter has a faster decay rate of singular values, making its tail energy much smaller.

	\begin{algorithm}[H]
		\caption{Sketching algorithm with subspace power iteration ({\bf sub-Sketch})}
		\label{alg:sub-Sketch}
		\begin{algorithmic}[1]
			\Require matrix ${A}\in\mathbb{R}^{m\times n}$, sketch size parameters $k,l$, and integer $q>0$
			\Ensure  rank-$k$ approximation $\hat{{A}}={Q}{X}$ of $A$
			\State $\Omega\gets {\tt randn}(n,k), \Psi\gets {\tt randn}(l,m)$
			\State$\Omega\gets {\tt orth}(\Omega), \Psi^\top \gets {\tt orth}(\Psi^\top)$
			\State${Y}= {A}\Omega$, ${W}=\Psi {A}$
			\State $Q_0 \gets {\tt thinQR}(Y)$
			\For{$j=1, \ldots, q$}
			\State $\hat{{Y}}_j={A}^\top{Q_{j-1}}$
			\State$(\hat{{Q}}_j,\sim)\gets  {\tt thinQR}(\hat{{Y}}_j)$
			\State${Y}_j={A}\hat{{Q}}_j$
			\State$({Q}_j,\sim)\gets  {\tt thinQR}({Y}_j)$
			
			\EndFor
			\State ${Q}={Q}_q$
			\State$X\gets (\Psi Q)^\dag W$
		\end{algorithmic}
	\end{algorithm}

	Although power iteration can improve the accuracy of Algorithm \ref{alg:Sketch} to some extent, it still suffers from a problem, i.e., during the execution with power iteration, the rounding errors will eliminate all information about the singular modes associated with the singular values. To address this issue, we propose an improved sketching algorithm by orthonormalizing the columns of the sample matrix between each application of $A$ and $A^\top$, see Algorithm \ref{alg:sub-Sketch}. When $A$ is dense, the arithmetic cost of Algorithm \ref{alg:sub-Sketch} is $\mathcal{O}((q+1)(k+l)mn+kl(m+n))$ flops. Numerical experiments show that a good approximation can achieve with a choice of 1 or 2 for subspace power iteration parameter \cite{Halko et al2011}.

	\begin{algorithm}[H]
		
		\caption{sub-Sketch-STHOSVD}
		\label{alg:sub-Sketch-STHOSVD}
		\begin{algorithmic}[1]
			\Require tensor $\mathcal{X}\in\mathbb{R}^{I_1\times I_2\times\ldots\times I_N}$, targer rank $(r_1,r_2,\ldots,r_N)$, processing order $s_p:\{i_1,i_2,\ldots,i_N\}$, sketch size parameters $\{l_1,l_2,...,l_N\}$, and integer $q>0$
			\Ensure Tucker approximation $\hat{\mathcal{X}}=\mathcal{G}\times_1{U}^{\left(1\right)}\times_2{U}^{\left(2\right)}\ldots\times_N{U}^{\left(N\right)}$
			\State $\mathcal{G}\gets\mathcal{X}$
			\For{ $n = i_1,i_2,\ldots,i_N$}
			\State  $ ({Q},{X})\gets$ \textbf{sub-Sketch}$({G}_{(n)},r_n,l_n,q)$ (cf. Algorithm \ref{alg:sub-Sketch})
			\State  	  $  {U}^{\left(n\right)}\gets {Q}$
			\State  $\mathcal{G}\gets {\tt fold_{n}}(X)$
			\EndFor
		\end{algorithmic}
	\end{algorithm}
	
	Using Algorithm \ref{alg:sub-Sketch} to compute the low-rank approximations of intermediate matrices, we can obtain an improved sketching algorithm for STHOSVD, called sub-Sketch-STHOSVD, see Algorithm \ref{alg:sub-Sketch-STHOSVD}.
	The error-bound for Algorithm \ref{alg:sub-Sketch-STHOSVD} states in the following Theorem \ref{thmsubs}. Its proof is deferred in Appendix.
	
	\begin{theorem}\label{thmsubs}
		Let $\hat{\mathcal{X}}=\mathcal{G}\times_1{U}^{(1)}\times_2{U}^{(2)}\ldots\times_N{U}^{(N)}$ be the Tucker
		approximation of a tensor $\mathcal{X}\in\mathbb{R}^{I_1\times I_2\times\ldots\times I_N}$ obtained by the sub-Sketch-STHOSVD algorithm (i.e., Algorithm \ref{alg:sub-Sketch-STHOSVD}) with target rank $r_n<I_n, n=1,2,...,N$, sketch size parameters $\{l_1,l_2,...,l_N\}$ and processing order $p:\{1,2,\ldots,N\}$. Let ${\varpi_k}\equiv \frac{\sigma_{k+1}}{\sigma_{k}}$ denote the singular value gap, then
		\begin{equation*}
			\begin{aligned}
				\mathbb{E}_{\{\Omega _j\}_{j = 1}^N}\|\mathcal {X} - \widehat {\mathcal {X}}\|_F^2&\le  {\sum\limits_{n = 1}^N (1+f(r_n,l_n))\cdot\min_{\varrho_n<r_n-1}(1+f(\varrho_n,r_n){\varpi_r}^{4q})\cdot\tau_{\varrho+1}^2({X_{(n)}})}\\
				&\le  \sum\limits_{n = 1}^N (1+f(r_n,l_n))\cdot\min_{\varrho_n<r_n-1}(1+f(\varrho_n,r_n){\varpi_r}^{4q})\|\mathcal{X}-\hat{\mathcal{X}}_{\rm opt}\|_F^2.
			\end{aligned}	
		\end{equation*}
	\end{theorem}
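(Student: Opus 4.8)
The plan is to argue exactly as in the proof of Theorem~\ref{thm:Sketch-STHOSVD}, the only new ingredient being a matrix-level error bound for the inner routine \textbf{sub-Sketch} (Algorithm~\ref{alg:sub-Sketch}) that plays, for the power-iteration variant, the role Theorem~\ref{thmSketch} played for Algorithm~\ref{alg:Sketch}. Concretely the proof splits into two parts. Part (I): show that for $A\in\mathbb{R}^{m\times n}$ the rank-$k$ approximation $\widehat A=QX$ returned by Algorithm~\ref{alg:sub-Sketch} with $l>k+1$ and $q\ge 1$ satisfies
\[
\mathbb{E}\,\|A-\widehat A\|_F^2\le (1+f(k,l))\cdot\min_{\varrho<k-1}\bigl(1+f(\varrho,k)\,\varpi_\varrho^{4q}\bigr)\cdot\tau_{\varrho+1}^2(A),
\]
where $\varpi_\varrho=\sigma_{\varrho+1}/\sigma_\varrho$. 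Part (II): insert this bound, with $k=r_n$ and $l=l_n$, into the STHOSVD telescoping identity $\|\mathcal{X}-\widehat{\mathcal{X}}\|_F^2=\sum_{n}\|\widehat{\mathcal{X}}^{(n-1)}-\widehat{\mathcal{X}}^{(n)}\|_F^2$ of Theorem~\ref{thmST}, take iterated expectations over $\Omega_1,\dots,\Omega_N$, bound each intermediate tail energy $\tau^2_{\cdot}(G^{(n-1)}_{(n)})$ (resp.\ $\Delta_n^2(\cdot)$) by the corresponding quantity for $X_{(n)}$ via the singular-value interlacing of the mode-$n$ unfolding of a partially projected tensor, and close with $\Delta_n^2(\mathcal{X})\le\|\mathcal{X}-\widehat{\mathcal{X}}_{\rm opt}\|_F^2$. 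Part (II) is word-for-word the computation already displayed in the proof of Theorem~\ref{thm:Sketch-STHOSVD}, so essentially all the work is in part (I).

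For part (I), observe first that the reconstruction step of Algorithm~\ref{alg:sub-Sketch}, namely $X=(\Psi Q)^\dag W=(\Psi Q)^\dag\Psi A$, is identical to that of Algorithm~\ref{alg:Sketch}. Hence Tropp et al.'s analysis of the second sketch $\Psi$ applies verbatim and gives, conditionally on $Q$,
\[
\mathbb{E}_{\Psi}\,\|A-\widehat A\|_F^2=\|(I-QQ^\top)A\|_F^2+\mathbb{E}_{\Psi}\|Q^\top A-X\|_F^2\le (1+f(k,l))\,\|(I-QQ^\top)A\|_F^2 .
\]
It remains to control $\mathbb{E}_{\Omega}\|(I-QQ^\top)A\|_F^2$ with $Q$ the output of $q$ orthonormalized subspace-iteration steps. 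The key structural fact is that the thin-QR calls in the loop only re-basis the sample matrix without changing its column space, so $\mathrm{range}(Q)=\mathrm{range}\bigl((AA^\top)^qA\,\Omega\bigr)=\mathrm{range}(B\Omega)$ with $B:=(AA^\top)^qA=US^{2q+1}V^\top$ (using the SVD $A=USV^\top$); the orthonormalizations matter numerically but are invisible to this idealized bound.

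Because $\mathrm{range}(B\Omega)\subseteq\mathrm{range}(U)$, writing $P$ for the orthoprojector onto $\mathrm{range}(Q)$ we get $\|(I-P)A\|_F=\|(I-\widetilde P)S\|_F$, where $\widetilde P=U^\top P U$ is the orthoprojector onto $\mathrm{range}\bigl(S^{2q+1}(V^\top\Omega)\bigr)$. This reduces the problem to a purely diagonal one, to which the deterministic range-finder estimate of Halko et al./Tropp et al.\ applies with test matrix $S^{2q}(V^\top\Omega)$: splitting $S=\mathrm{diag}(S_1,S_2)$ at level $\varrho$, writing $\Omega_1=V_1^\top\Omega$, $\Omega_2=V_2^\top\Omega$, and using $(S_1^{2q}\Omega_1)^\dag=\Omega_1^\dag S_1^{-2q}$,
\[
\|(I-\widetilde P)S\|_F^2\le\|S_2\|_F^2+\bigl\|S_2^{2q}(S_2\Omega_2\Omega_1^\dag)S_1^{-2q}\bigr\|_F^2\le\|S_2\|_F^2+\varpi_\varrho^{4q}\,\|S_2\Omega_2\Omega_1^\dag\|_F^2 ,
\]
since $\|S_2^{2q}\|_2\,\|S_1^{-2q}\|_2=(\sigma_{\varrho+1}/\sigma_\varrho)^{2q}$. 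As $\Omega$ is Gaussian and $V$ has orthonormal columns, $\Omega_1$ and $\Omega_2$ are independent standard normal matrices, so the standard identity $\mathbb{E}\|S_2\Omega_2\Omega_1^\dag\|_F^2=\|S_2\|_F^2\,\mathbb{E}\|\Omega_1^\dag\|_F^2=f(\varrho,k)\,\tau_{\varrho+1}^2(A)$ yields $\mathbb{E}_\Omega\|(I-P)A\|_F^2\le(1+f(\varrho,k)\varpi_\varrho^{4q})\tau_{\varrho+1}^2(A)$. Minimizing over $\varrho<k-1$ and combining with the $\Psi$-bound proves part (I), and then part (II) delivers the theorem.

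The main obstacle is part (I), and within it the power-iteration range analysis: one must (a) justify the reduction to the diagonal problem, which rests on $\mathrm{range}(Q)\subseteq\mathrm{range}(U)$ and the range-preservation of the thin-QR steps; and (b) extract the singular-value-gap factor so that it multiplies only the Gaussian cross-term $\|S_2\Omega_2\Omega_1^\dag\|_F$ and not the benign $\|S_2\|_F^2$ term, which is what produces the $\varpi^{4q}$ factor after squaring. The remaining tensor-level bookkeeping — telescoping, passing to mode-$n$ unfoldings, iterated expectations, and interlacing of the intermediate singular values — is identical to the proof of Theorem~\ref{thm:Sketch-STHOSVD} and needs no new idea.
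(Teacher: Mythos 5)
Your proposal has the same skeleton as the paper's own argument: a matrix-level expected error bound for Algorithm~\ref{alg:sub-Sketch} playing the role that Theorem~\ref{thmSketch} played for Algorithm~\ref{alg:Sketch}, inserted into the STHOSVD telescoping of Theorem~\ref{thmST} exactly as in the proof of Theorem~\ref{thm:Sketch-STHOSVD}; your Part (II) is word-for-word the paper's appendix computation. The genuine difference is inside Part (I). The paper assembles its matrix-level bound (Lemma~\ref{lemma-thmsub}) purely by citation: the error split of Lemma~\ref{lem1}, the $\Psi$-identity of Lemma~\ref{lem2}, and, crucially, the power-iteration range-finder bound of Lemma~\ref{thm5}, which is simply quoted from Zhang et al. You instead re-derive that last ingredient from scratch: using that the thin-QR steps preserve the column space, so $\mathrm{range}(Q)=\mathrm{range}\bigl(A(A^\top A)^q\Omega\bigr)$, reducing to the diagonal problem via the SVD, applying the deterministic range-finder bound with effective test matrix $S^{2q}V^\top\Omega$, pulling out $\|S_2^{2q}\|_2\|S_1^{-2q}\|_2$ to isolate the gap factor on the Gaussian cross term only, and finishing with $\mathbb{E}\|S_2\Omega_2\Omega_1^\dag\|_F^2=f(\varrho,k)\tau_{\varrho+1}^2(A)$. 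That derivation is correct and standard, and it buys a self-contained proof where the paper leans on an external theorem. One caveat to flag: your argument yields the gap at the split index, $\varpi_\varrho^{4q}=(\sigma_{\varrho+1}/\sigma_\varrho)^{4q}$, whereas Lemma~\ref{thm5} and the theorem statement carry $\varpi_k^{4q}$ (resp.\ $\varpi_r^{4q}$), the gap at the sketch/target rank; these quantities are incomparable in general, so strictly you prove a variant of the displayed bound rather than its literal form. Given that the paper's own indexing is already loose here (it writes $\varpi_r$ and $\tau_{\varrho+1}$ without the subscript $n$), this is a bookkeeping mismatch worth stating explicitly in your write-up, not a flaw in your reasoning.
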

	\begin{proof}
		See Appendix.
	\end{proof}

	\section{{Numerical experiments}} \label{Sec:exp}
	
	This section conducts numerical experiments with synthetic data and real-world data, including comparisons between the traditional THOSVD, STHOSVD algorithms, the R-STHOSVD algorithm proposed in \cite{Minster et al2020}, and our proposed algorithms Sketch-STHOSVD and sub-Sketch-STHOSVD. Regarding the numerical settings, the oversampling parameter $p=5$ is used in Algorithm \ref{alg:R-SVD}, the sketch parameters $l_n = r_n+2, n = 1, 2, \ldots, N$, are used in Algorithms \ref{alg:Sketch-STHOSVD} and \ref{alg:sub-Sketch-STHOSVD}, and the power iteration parameter $q=1$ is used in Algorithm \ref{alg:sub-Sketch-STHOSVD}.

	\subsection{{Hilbert tensor}}
	Hilbert tensor is a synthetic and supersymmetric tensor, with each entry defined as
	$$\mathcal {X}_{{i_1}{i_2}...{i_n}}= \frac{1}{{{i_1} + {i_2} + ... + {i_n}}}, 1 \le {i_n} \le {I_n}, n = 1,2,...,N.$$
	In {the first} experiment, we set $N=5$ and $I_n = 25, n=1,2,\ldots,N$. The target rank is chosen as $(r,r,r,r,r)$, where $r\in[1,25]$. Due to the supersymmetry of the Hilbert tensor, the processing order in the algorithms does not affect the final experimental results, and thus the processing order can be directly chosen as $s_p:\{1,2,3,4,5\}$.
	\begin{figure}[htb]
		\centering
		\includegraphics[width=1\linewidth]{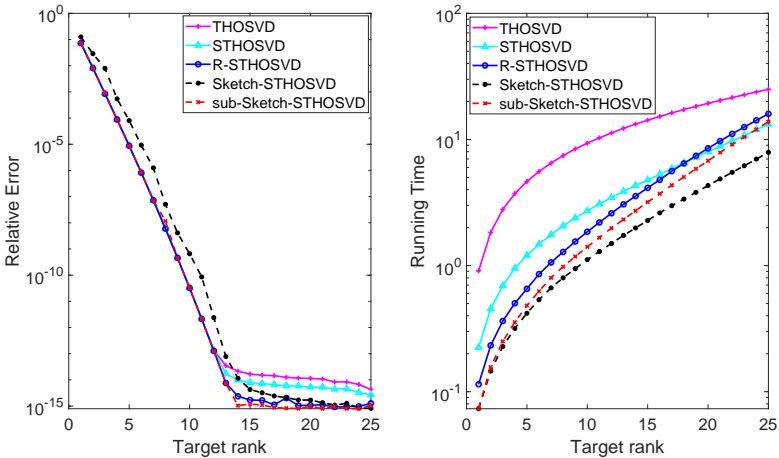}
		\caption{Results comparison on the Hilbert tensor with a size of $25\times25\times25\times25\times25$ in terms of numerical error (left) and CPU time (right).}
		\label{fig:results-hilbert-tensor}
	\end{figure}
	
	The results of different algorithms are given in Figure \ref{fig:results-hilbert-tensor}. It shows that our proposed algorithms (i.e., Sketch-STHOSVD and sub-Sketch-STHOSVD) and algorithm R-STHOSVD outperform the algorithms THOSVD and STHOSVD. In particular, the error of the proposed algorithms Sketch-STHOSVD and sub-Sketch-STHOSVD is comparable to R-STHOSVD (see the left plot in Figure \ref{fig:results-hilbert-tensor}), while they both use less CPU time than R-STHOSVD (see the right plot in Figure \ref{fig:results-hilbert-tensor}). This result demonstrates the excellent performance of the proposed algorithms and indicates that the two-sided sketching method and the subspace power iteration used in our algorithms can indeed improve the performance of STHOSVD algorithm.

	{For a large-scale test, we use a Hilbert tensor with a size of $500\times 500\times 500$ and conduct experiments using ten different approximate multilinear ranks. We perform the tests ten times and report the algorithms' average running time and relative error in Table \ref{hilbert-time} and Table \ref{hilbert-error}, respectively. The results show that the randomized algorithms can achieve higher accuracy than the deterministic algorithms. The proposed Sketch-STHOSVD algorithm is the fastest, and the sub-Sketch-STHOSVD algorithm achieves the highest accuracy efficiently.}

	\begin{table}[h]
		\begin{center}
			\begin{minipage}{\textwidth}
				\caption{{Results comparison in terms of the CPU time (in second) on the Hilbert tensor with a size of $500\times 500\times 500$ as the target rank increases.}} \label{hilbert-time}
				\tiny{
					\begin{tabular*}
						{\textwidth}{@{\extracolsep{\fill}}lcccccc@{\extracolsep{\fill}}}
						\toprule%
						Target rank & THOSVD  & STHOSVD & R-STHOSVD &Sketch-STHOSVD  &sub-Sketch-STHOSVD\\
						\midrule
						(10,10,10)& 17.18
						& 7.49
						& 0.92
						&\textbf{0.86}
						&0.98
						\\
						(20,20,20) & 23.13
						& 8.87
						& 1.25
						&\textbf{1.05}
						&1.48
						\\
						(30,30,30)& 24.91
						& 9.35
						& 1.66
						&\textbf{1.53}
						&2.16
						\\
						(40,40,40) & 28.05
						& 10.41
						& 1.94
						&\textbf{1.44}
						&2.11
						\\
						(50,50,50)& 29.44
						& 11.39
						& 2.07
						&\textbf{1.67}
						&2.43
						\\
						(60,60,60)& 30.14
						& 11.07
						& 2.37
						&\textbf{1.90}
						&2.77
						\\
						(70,70,70)& 29.44
						& 11.18
						& 2.57
						&\textbf{2.10}
						&3.02
						\\
						(80,80,80)& 29.65
						& 12.30
						& 3.05
						&\textbf{2.54}
						&3.75
						\\
						(90,90,90)& 31.11
						& 12.80
						& 3.80
						&\textbf{2.80}
						&4.33
						\\
						(100,100,100)& 32.22 & 13.51
						& 4.04
						&\textbf{3.07}
						&4.61
						\\
						\botrule
				\end{tabular*}}
			\end{minipage}
		\end{center}
	\end{table}
	
	\begin{table}[h]
		\begin{center}
			\begin{minipage}{\textwidth}
				\caption{{Results comparison in terms of the relative error on the Hilbert tensor with a size of $500\times 500\times 500$ as the target rank increases.}} \label{hilbert-error}
				\tiny{
					\begin{tabular*}
						{\textwidth}{@{\extracolsep{\fill}}lcccccc@{\extracolsep{\fill}}}
						\toprule%
						Target rank & THOSVD  & STHOSVD & R-STHOSVD &Sketch-STHOSVD  &sub-Sketch-STHOSVD\\
						\midrule
						(10,10,10)& 2.7354e-06
						& \textbf{2.7347e-06} & \textbf{2.7347e-06} &1.1178e-05 &2.7568e-06\\
						(20,20,20) & 1.1794e-12  & \textbf{1.1793e-12} & 1.1794e-12 &7.1408e-12  &1.2677e-12\\
						(30,30,30)& 4.6574e-15  & 3.2739e-15 & 3.2201e-15 &4.0641e-15  &\textbf{2.0182e-15} \\
						(40,40,40) & 4.4282e-15  & 3.4249e-15 & 2.8212e-15 &2.1562e-15   &\textbf{1.7860e-15} \\
						(50,50,50)& 4.1628e-15  & 3.2342e-15 & 2.6823e-15  &2.3205e-15  &\textbf{1.8625e-15} \\
						(60,60,60)& 4.1214e-15  & 3.1271e-15 & 2.3652e-15  &2.2920e-15  &\textbf{1.7472e-15} \\
						(70,70,70)& 4.1085e-15  & 3.0000e-15 & 2.1761e-15  &2.0499e-15   &\textbf{1.6370e-15} \\
						(80,80,80)& 4.0956e-15  & 3.1350e-15 & 1.8382e-15  &1.8209e-15   &\textbf{1.6424e-15} \\
						(90,90,90)& 4.0792e-15  & 3.3742e-15 & 1.8102e-15  & 1.7193e-15   &\textbf{1.5264e-15} \\
						(100,100,100)& 4.0390e-15  & 3.0571e-15 & 1.7323e-15  & 1.6304e-15   &\textbf{1.4957e-15} \\
						\botrule
				\end{tabular*}}
			\end{minipage}
		\end{center}
	\end{table}
	
	\subsection{{Sparse tensor}}
	
	\begin{figure}[htb]
		\centering
		\includegraphics[trim={{.22\linewidth} {.01\linewidth} {.25\linewidth} {.065\linewidth}}, clip,width=0.8\linewidth]{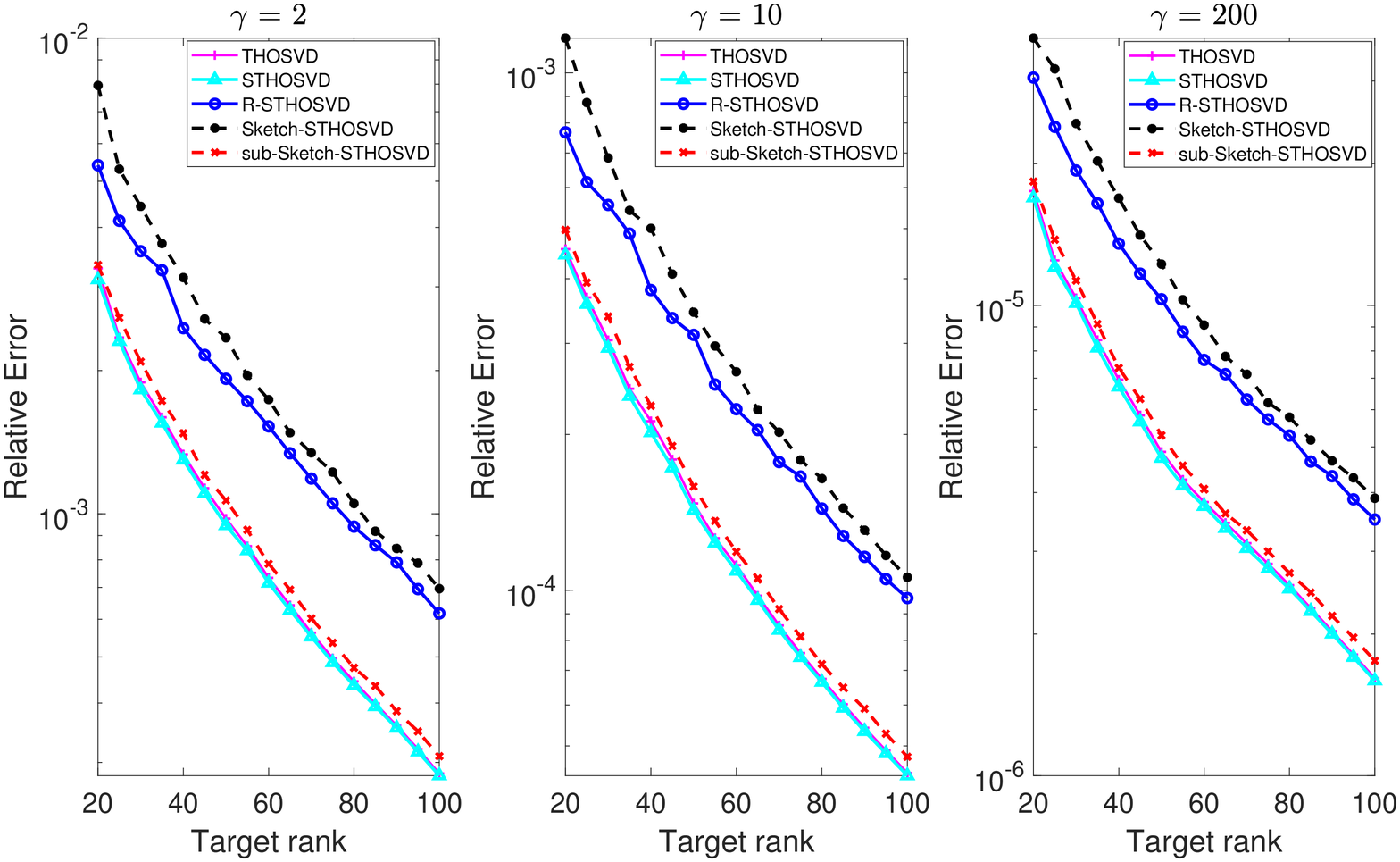} \\
		\includegraphics[trim={{.22\linewidth} {.01\linewidth} {.25\linewidth} {.065\linewidth}}, clip,width=0.8\linewidth]{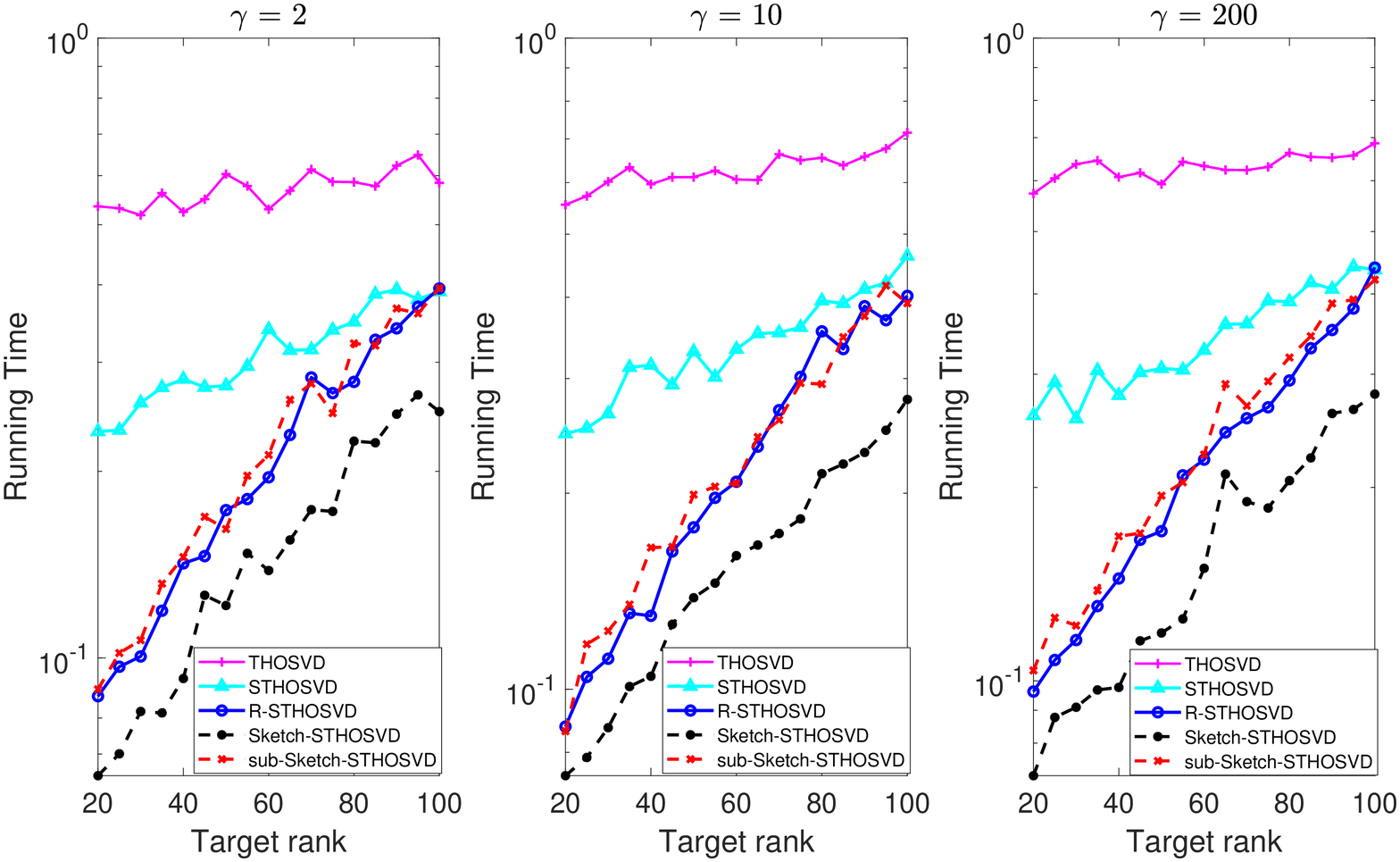}
		\caption{Results comparison on a sparse tensor with a size of $200\times200\times200$ in terms of numerical error (first row) and CPU time (second row). }
		\label{fig:results-sparse-tensor}
	\end{figure}
	In this experiment, we test the performance of different algorithms on a sparse tensor $\mathcal {X} \in \mathbb{R}^{200 \times 200 \times 200}$, i.e.,
	$$
	\mathcal {X} = \sum\limits_{i = 1}^{10} {\frac{\gamma }{{{i^2}}}} {{\bf{x}}_i} \circ {{\bf{y}}_i} \circ {{\bf{z}}_i} + \sum\limits_{i = 11}^{200} {\frac{1}{{{i^2}}}} {{\bf{x}}_i} \circ {{\bf{y}}_i} \circ {{\bf{z}}_i}.
	$$
	Where ${\bf{x}}_i,{{\bf{y}}_i},{{\bf{z}}_i} \in \mathbb{R}^n$ are sparse vectors all generated using the {\tt sprand} command in \textsc{MATLAB} with $5\%$ nonzeros each, and $\gamma$ is a user-defined parameter which determines the strength of the gap between the first ten terms and the rest terms. The target rank is chosen as $(r,r,r)$, where $r\in[20,100]$. The experimental results show in Figure \ref{fig:results-sparse-tensor}, in which three different values $\gamma = 2,10,200$ are tested. The increase of gap means that the tail energy will be reduced, and the accuracy of the algorithms will be improved. Our numerical experiments also verified this result. 
	
	Figure \ref{fig:results-sparse-tensor} demonstrates the superiority of the proposed sketching algorithms. In particular, we see that the proposed Sketch-STHOSVD is the fastest algorithm, with a comparable error against R-STHOSVD; the proposed sub-Sketch-STHOSVD can reach the same accuracy as the STHOSVD algorithm but in much less CPU time; and the proposed sub-Sketch-STHOSVD achieves much better low-rank approximation than R-STHOSVD with similar CPU time.
		\begin{figure}[htb]
		\centering
		\includegraphics[trim={{.25\linewidth} {.03\linewidth} {.25\linewidth} {.05\linewidth}}, clip,width=0.8\linewidth]{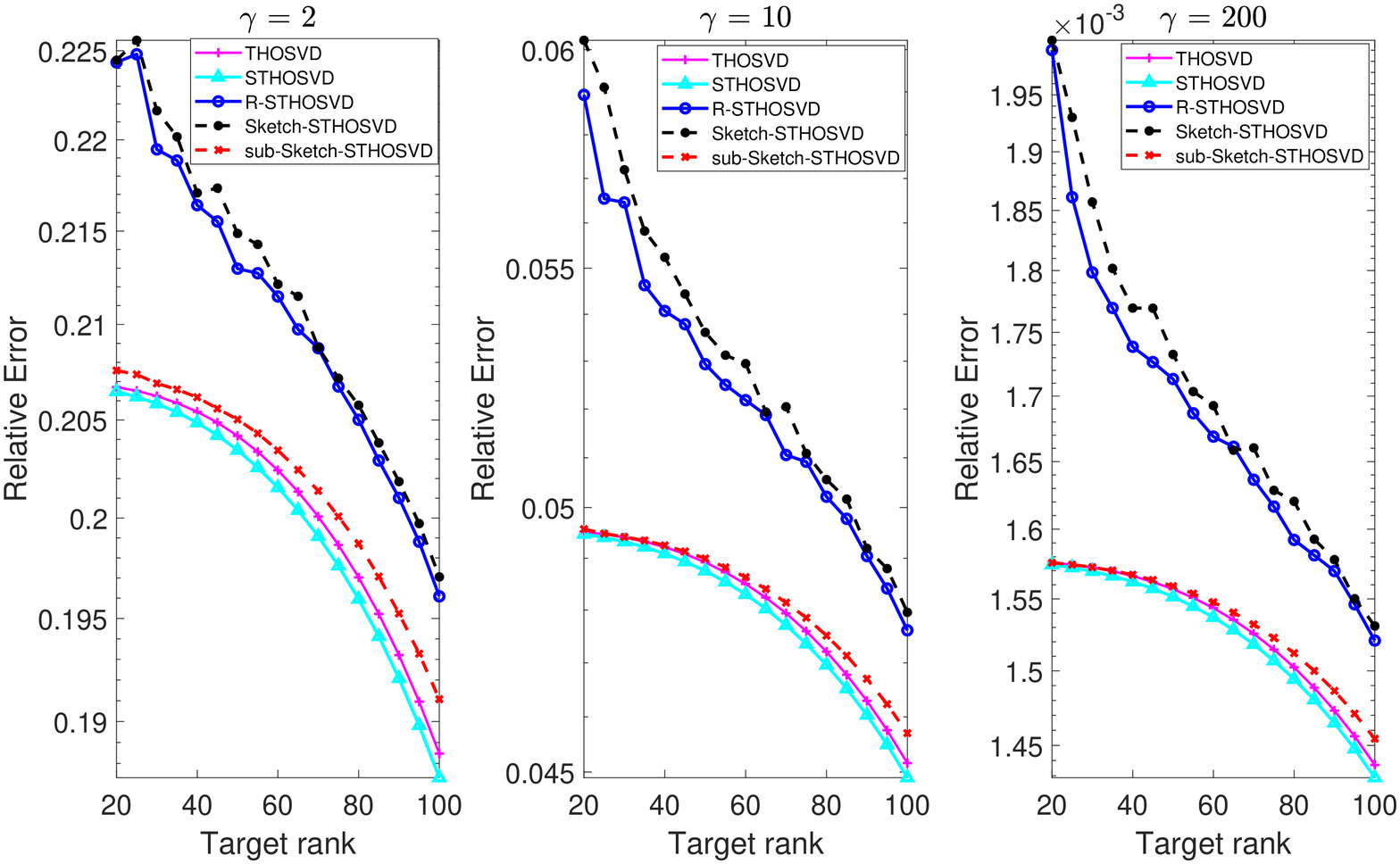} \\
		\includegraphics[trim={{.22\linewidth} {.03\linewidth} {.25\linewidth} {.065\linewidth}}, clip,width=0.8\linewidth]{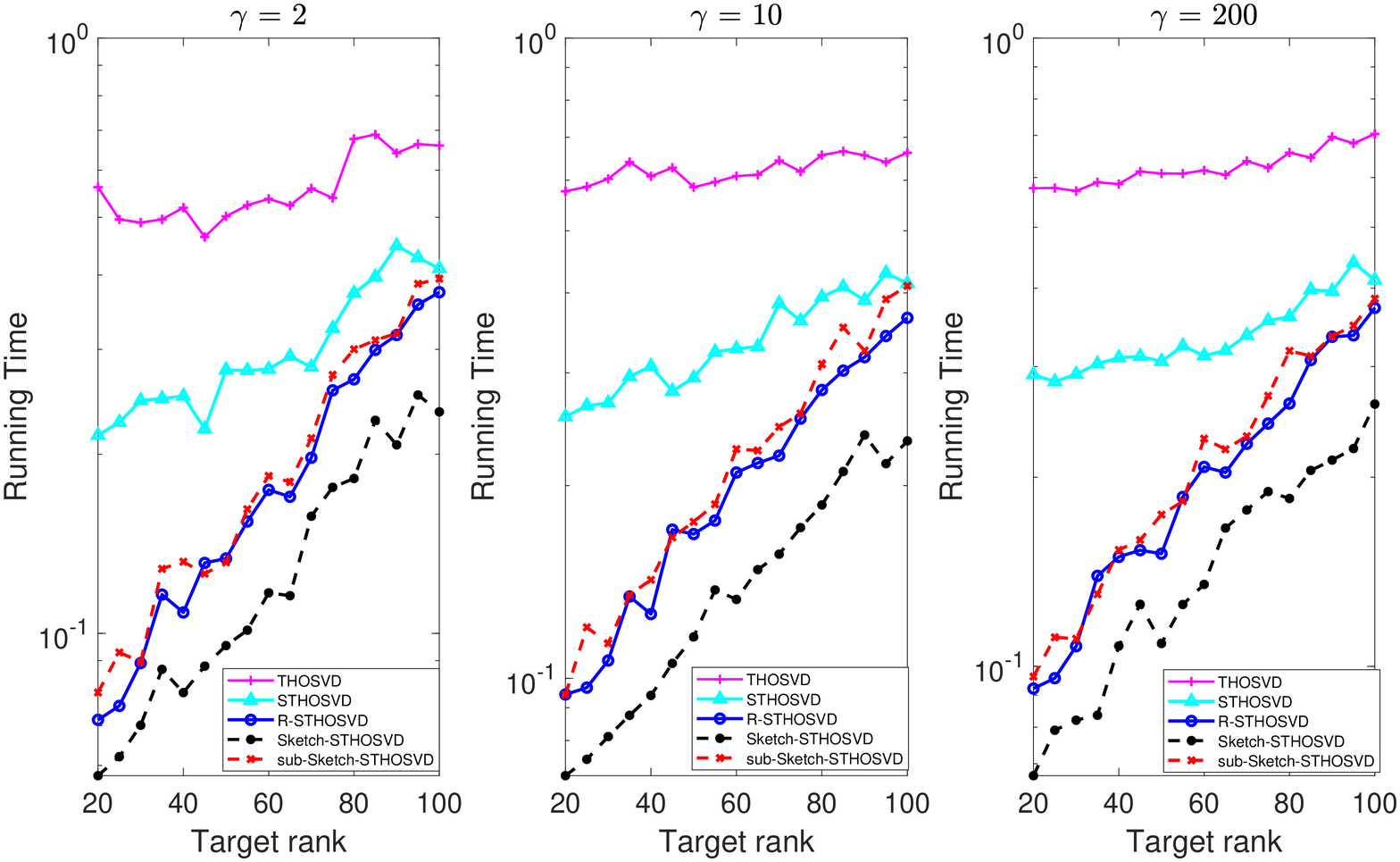}
		\caption{{Results comparison on a $200\times200\times200$ sparse tensor with noise in terms of numerical error (first row) and CPU time (second row). }}
		\label{fig:results-sparse-noise-tensor}
	\end{figure}
	
	{Now we consider the influence of noise on algorithms' performance. Specifically, the sparse tensor $\mathcal{X}$ with noise is designed in the same manner as in \cite{Xiao et al2021}, i.e.,}
	{$$\hat{\mathcal{X}}=\mathcal{X}+\delta\mathcal{K},$$
	where $\mathcal{K}$ is a standard Gaussian tensor and $\delta$ is used to control the noise level. Let $\delta=10^{-3}$ and keep the rest parameters the same as the settings in the previous experiment. The relative error and running time of different algorithms are shown in Figure \ref{fig:results-sparse-noise-tensor}.}
	{In Figure \ref{fig:results-sparse-noise-tensor}, we see that noise indeed affects the accuracy of the low-rank approximation, especially when the gap is small. However, the influence of noise does not change the conclusion obtained on the case without noise. The accuracy of our sub-Sketch-STHOSVD algorithm is the highest among the randomized algorithms. As $\gamma$ increases, sub-Sketch-STHOSVD can achieve almost the same accuracy as that of THOSVD and STHOSVD in a comparable CPU time against R-STHOSVD. }
	\subsection{{Real-world data tensor}}
	In this experiment, we test the performance of different algorithms on a colour image, called HDU picture\footnote{https://www.hdu.edu.cn/landscape}, with a size of $1200\times1800\times3$. We also evaluate the proposed sketching algorithms on the widely used YUV Video Sequences\footnote{http://trace.eas.asu.edu/yuv/index.html}. Taking the `hall monitor' video as an example and using the first 30 frames, a three order tensor with a size of $144\times176\times30$ is then formed for this test.

	Firstly, we conduct an experiment on the HDU picture with target rank $(500,500,3)$, and compare the PSNR and CPU time of different algorithms. The experimental result is shown in Figure \ref{fig:results-HDU-tensor}, which shows that the PSNR of sub-Sketch-STHOSVD, THOSVD and STHOSVD is very similar (i.e., $\sim$ 40) and that sub-Sketch-STHOSVD is more efficient in terms of CPU time. R-STHOSVD and Sketch-STHOSVD are also very efficient compared to sub-Sketch-STHOSVD; however, the PSNR they achieve is 5 dB less than sub-Sketch-STHOSVD.
	Then we conduct separate numerical experiments on the HDU picture and the `hall monitor' video clip as the target rank increases, and compare these algorithms in terms of the relative error, CPU time and PSNR, see Figure \ref{fig:results-HDU} and Figure \ref{fig:results-YUV-tensor}. These experimental results again demonstrate the superiority (i.e., low error and good approximation with high efficiency) of the proposed sub-Sketch-STHOSVD algorithm in computing the Tucker decomposition approximation.
		\begin{figure}[htb]
		\centering
		\begin{tabular}{ccc}
			\includegraphics[width=0.3\linewidth]{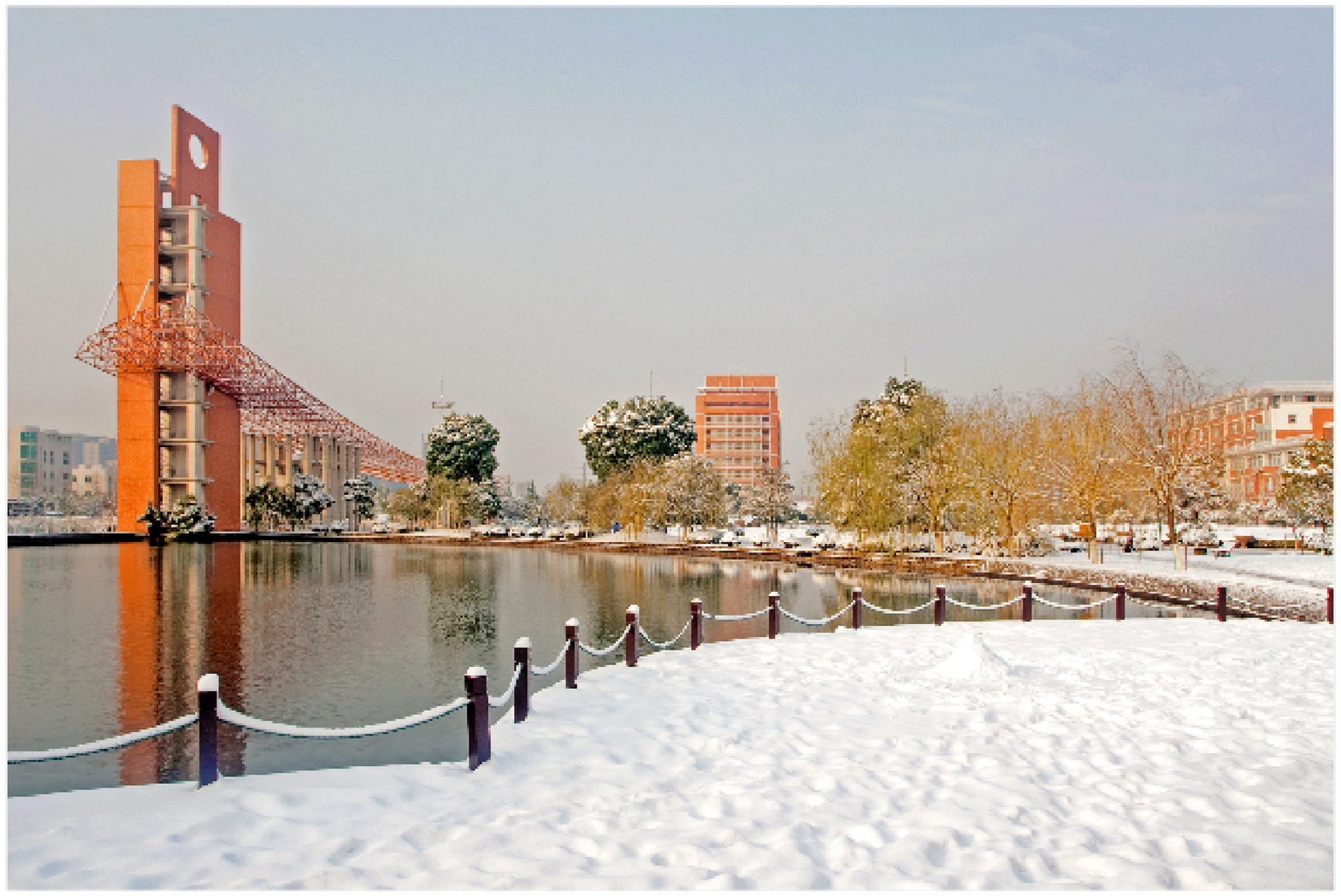} &
			\includegraphics[width=0.3\linewidth]{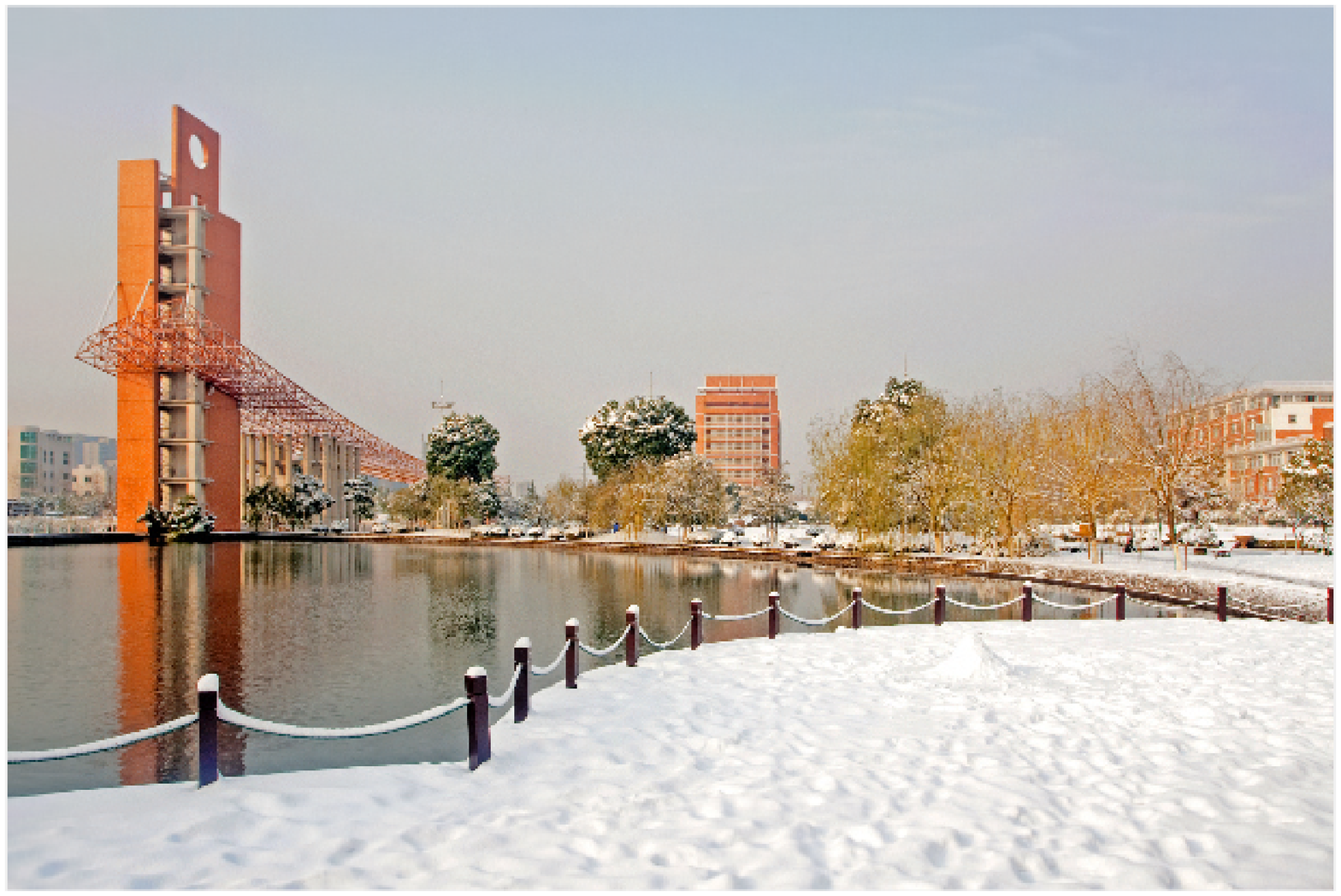} &
			\includegraphics[width=0.3\linewidth]{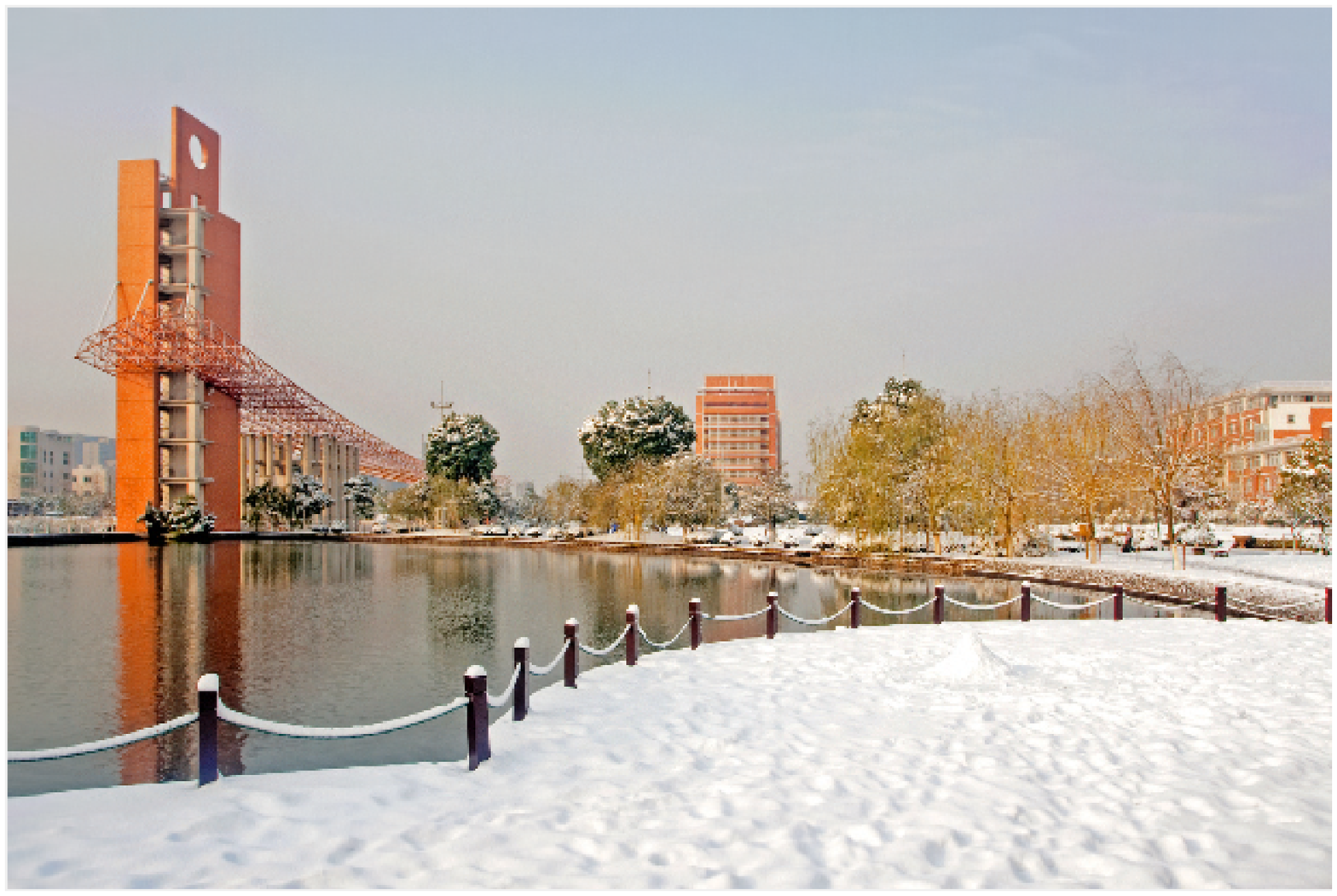} \\
			{\footnotesize Original} & {\footnotesize THOSVD (2.62; 40.61)} & {\footnotesize STHOSVD (1.89; 40.65)} \\
			\includegraphics[width=0.3\linewidth]{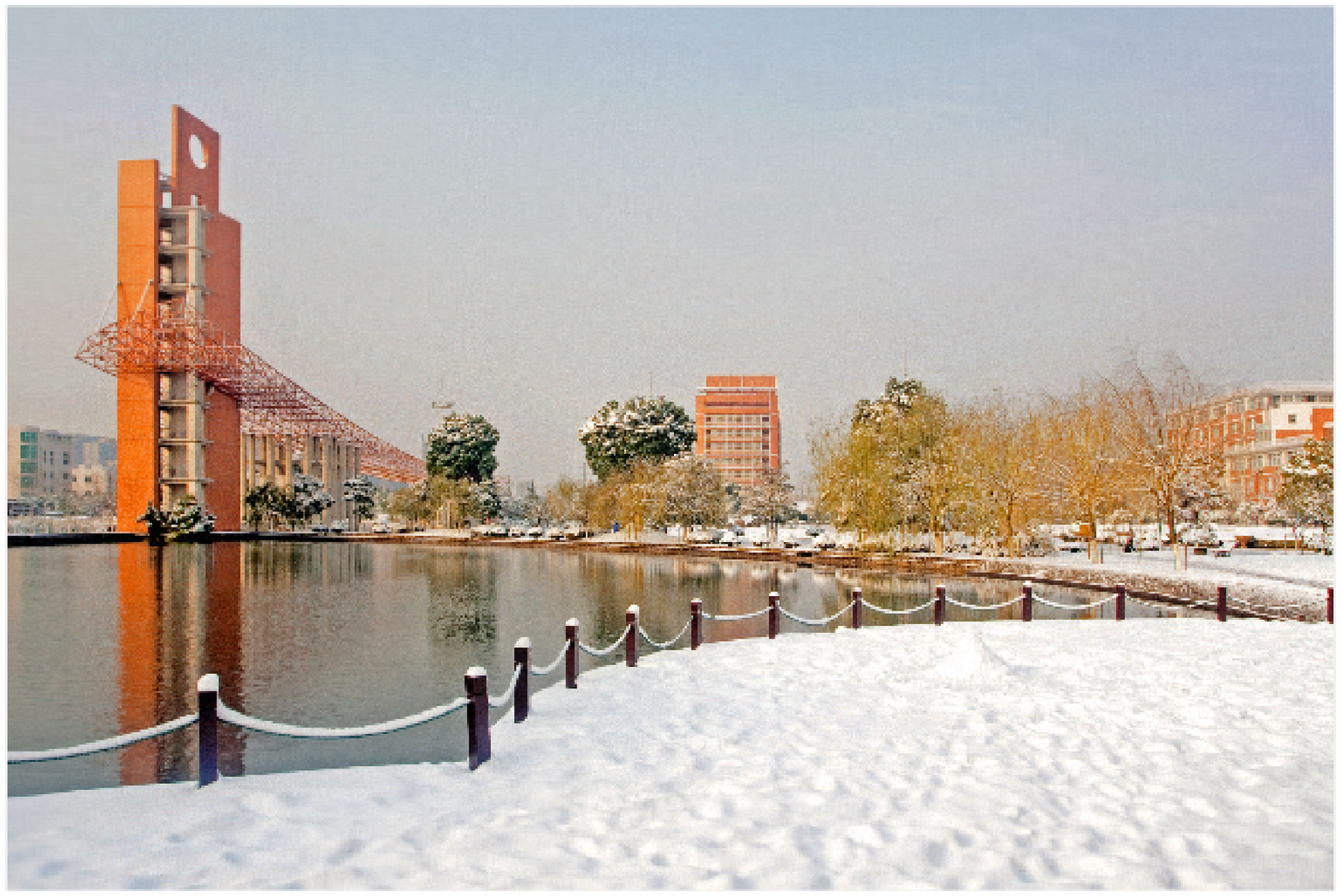} &
			\includegraphics[width=0.3\linewidth]{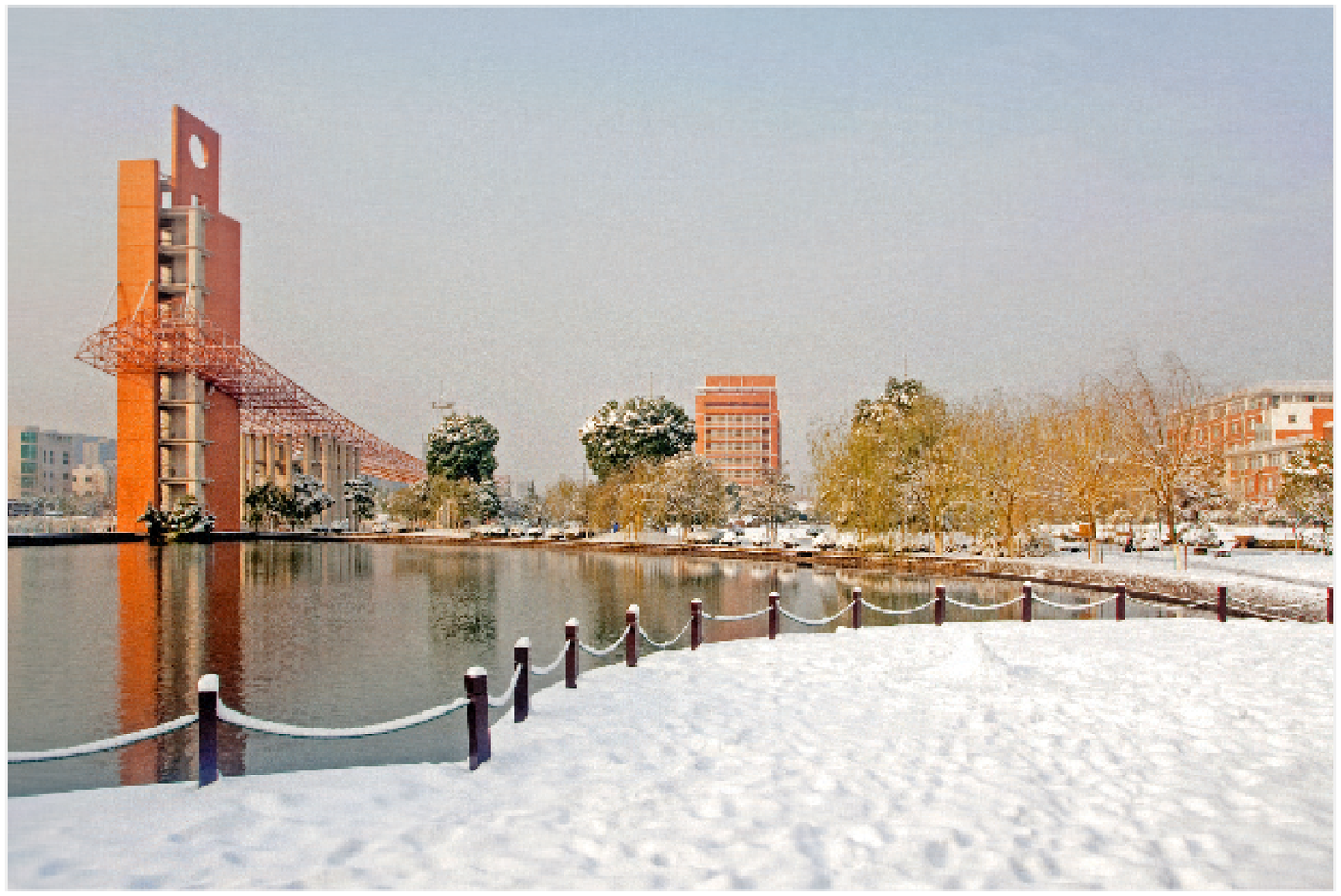} &
			\includegraphics[width=0.3\linewidth]{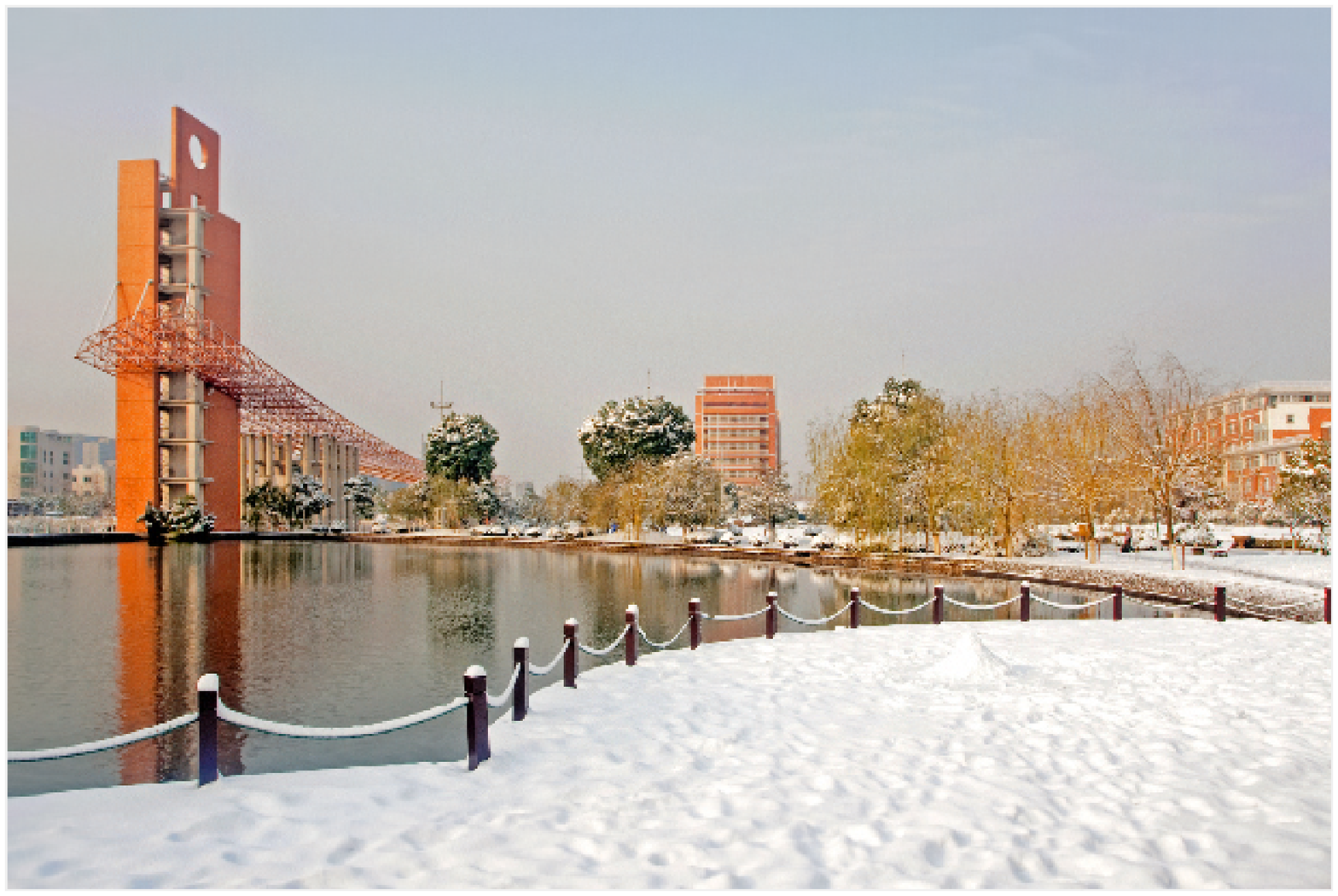} \\	
			{\footnotesize R-STHOSVD } & {\footnotesize Sketch-STHOSVD } & {\footnotesize sub-Sketch-STHOSVD} \vspace{-0.05in} \\
			{\footnotesize (0.61; 34.72)} &
			{\footnotesize (0.55; 34.63)} & {\footnotesize (0.84; 39.97)} \\
		\end{tabular}
		\caption{Results comparison on a HDU picture with a size of $1200\times1800\times3$ in terms of PSNR (i.e., peak signal-to-noise ratio) and CPU time. The target rank is (500,500,3). The two values in e.g. (2.62; 40.61) represent the CPU time and the PSNR, respectively. }
		\label{fig:results-HDU-tensor}
	\end{figure}
	
	\begin{figure}[htb]
		\centering
		\includegraphics[width=0.95\linewidth]{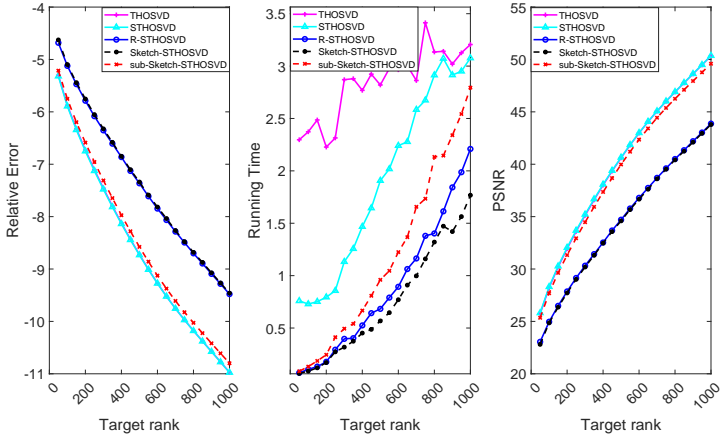}
		\caption{Results comparison on a HDU picture with size of $1200\times1800\times3$ in terms of numerical error (left), CPU time (middle) and PSNR (right). The HDU picture is with target rank $(r,r,3), r\in[50,1000]$. }
		\label{fig:results-HDU}
	\end{figure}
	
	
	\begin{figure}[htb]
		\centering
		\includegraphics[width=0.95\linewidth]{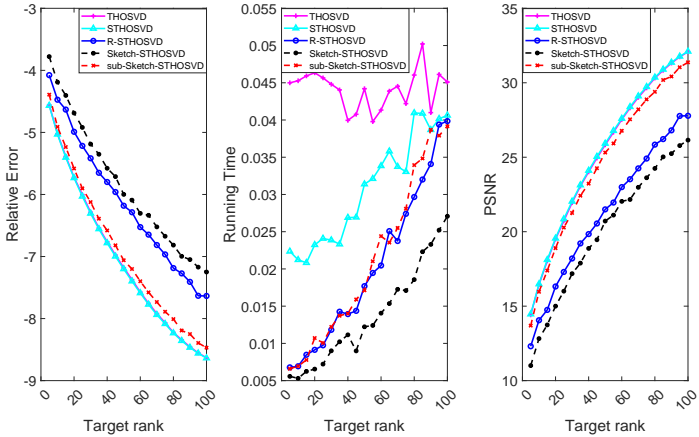}
		\caption{Results comparison on the `hall monitor' grey video with size of $144\times176\times30$ in terms of numerical error (left), CPU time (middle) and PSNR (right). The `hall monitor' grey video is with target rank $(r,r,10)$, $ r\in[5,100]$. }
		\label{fig:results-YUV-tensor}
	\end{figure}
	
	{In the last experiment, a larger-scale real-world tensor data is used. We choose a color image (called the LONDON picture) with a size of $4775\times 7155\times 3$ as the test image and consider the influence of noise. The LONDON picture with white Gaussian noise is generated using the {\tt awgn(X,SNR)} built-in function in \textsc{MATLAB}. We set the target rank as (50,50,3) and SNR to 20. The results comparisons without and with white Gaussian noise are respectively shown in Figure \ref{fig:results-london} and Figure \ref{fig:results-london-noise} in terms of the CPU time and PSNR.}
{Moreover, we also test the algorithms on the LONDON picture as the target rank increases. The  results regarding the relative error, the CPU time and the PSNR are reported in Tables  \ref{london-error}, \ref{london-time} and \ref{london-psnr}, respectively. On the whole, the results again show the consistent performance of the proposed methods.}

	\begin{figure}[htb]
		\centering
		\begin{tabular}{ccc}
			\includegraphics[width=0.3\linewidth]{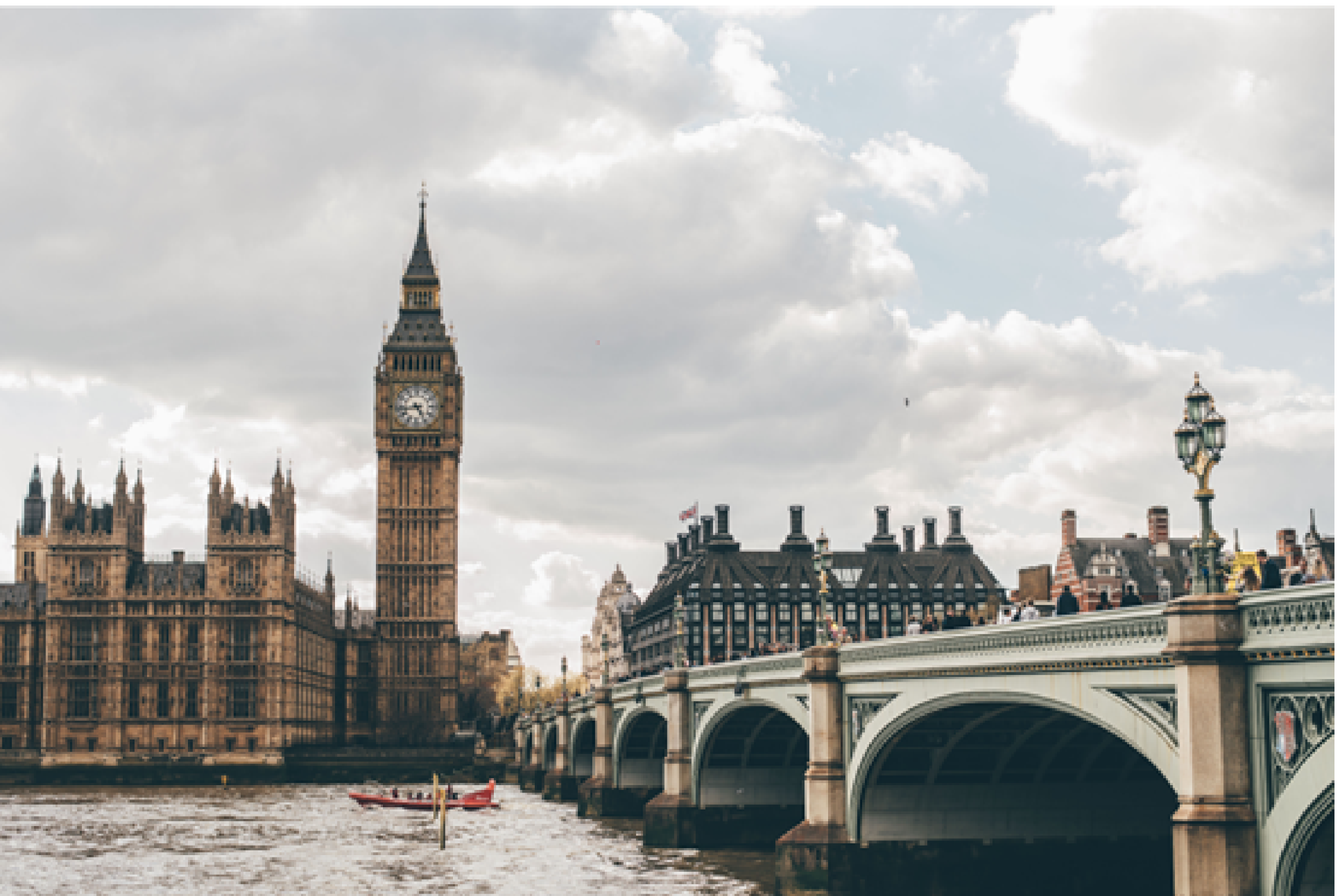} &
			\includegraphics[width=0.3\linewidth]{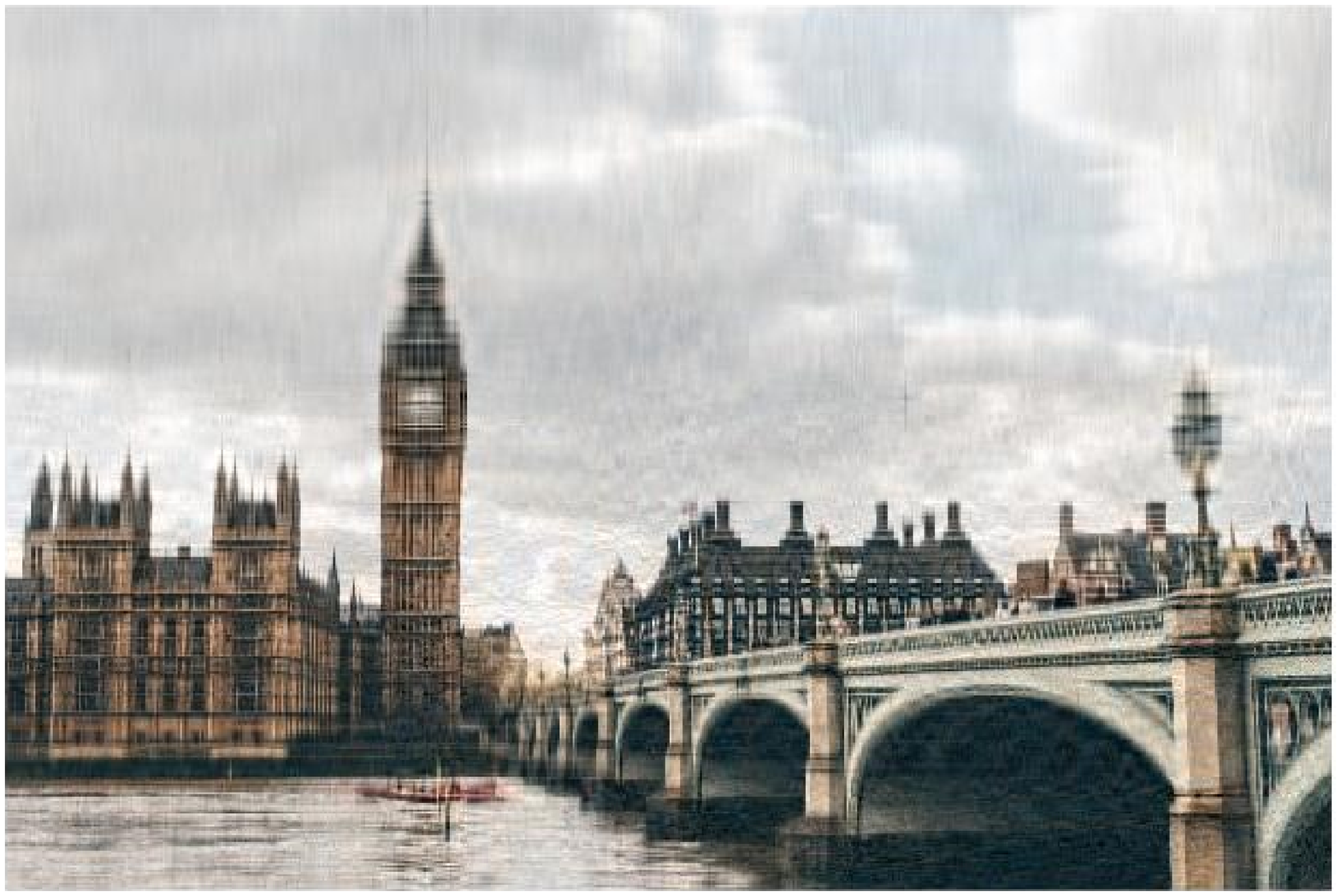} &
			\includegraphics[width=0.3\linewidth]{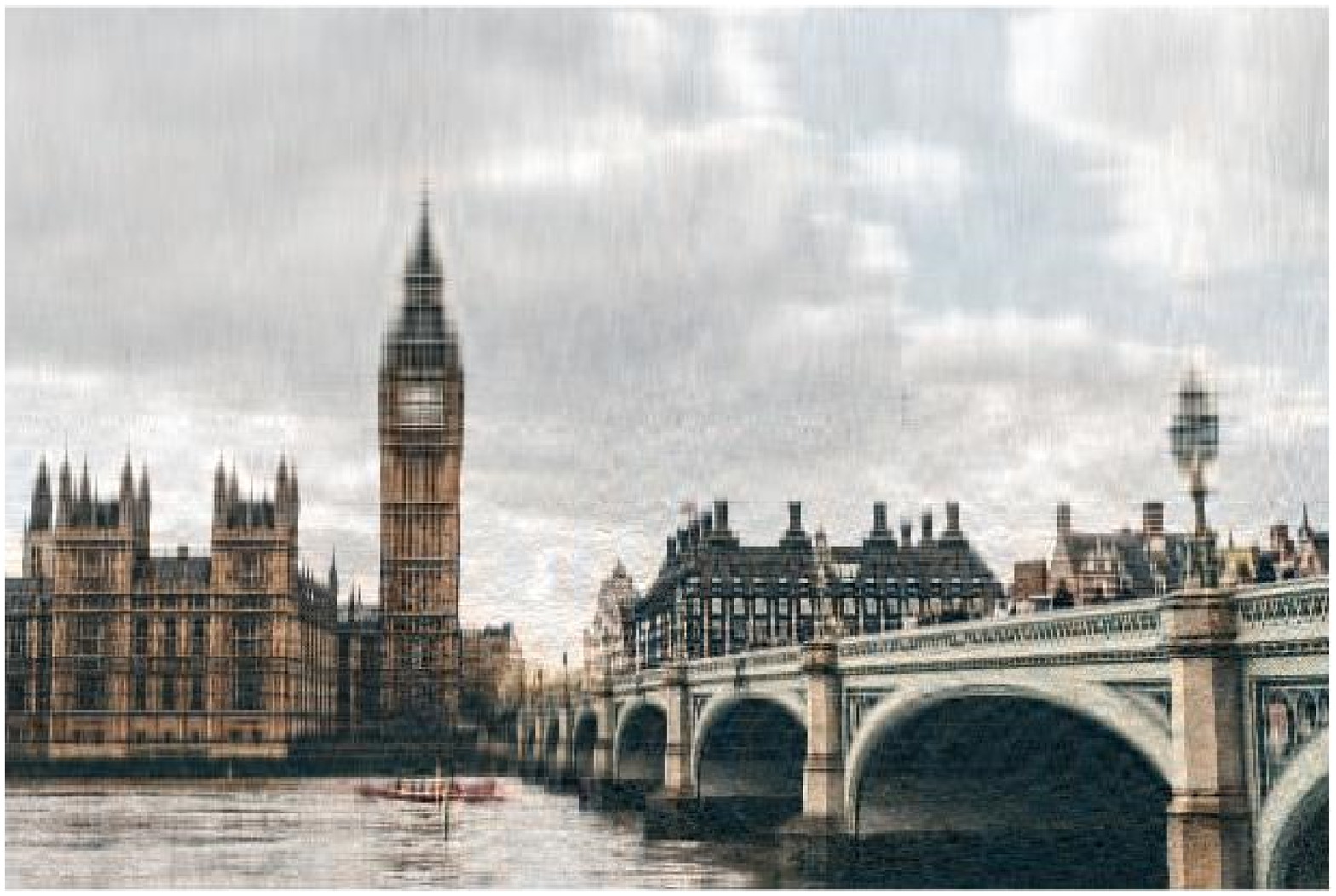} \\
			{\footnotesize Original} & {\footnotesize THOSVD (154.95; 24.07)} & {\footnotesize STHOSVD (49.34; 24.09)} \\
			\includegraphics[width=0.3\linewidth]{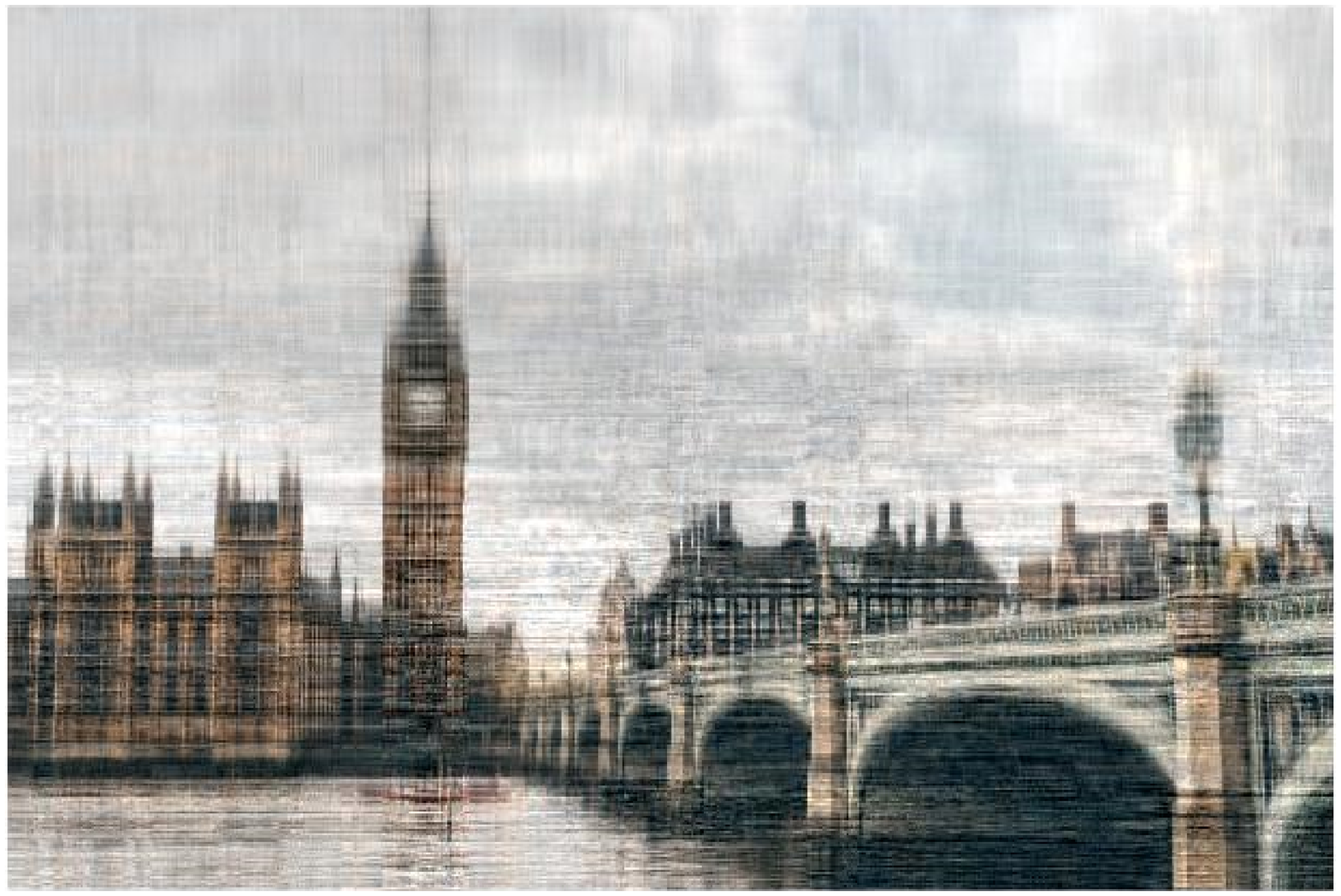} &
			\includegraphics[width=0.3\linewidth]{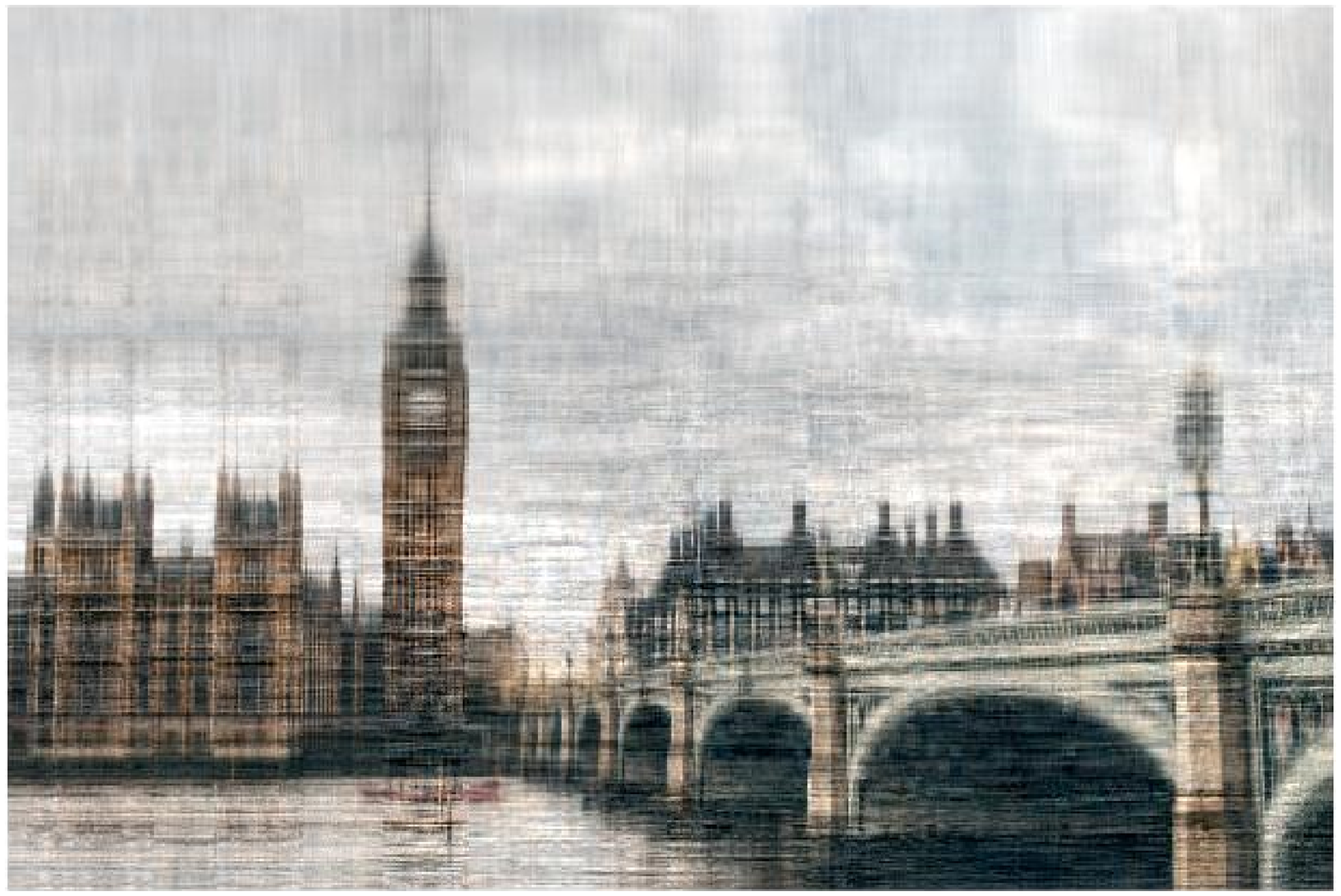} &
			\includegraphics[width=0.3\linewidth]{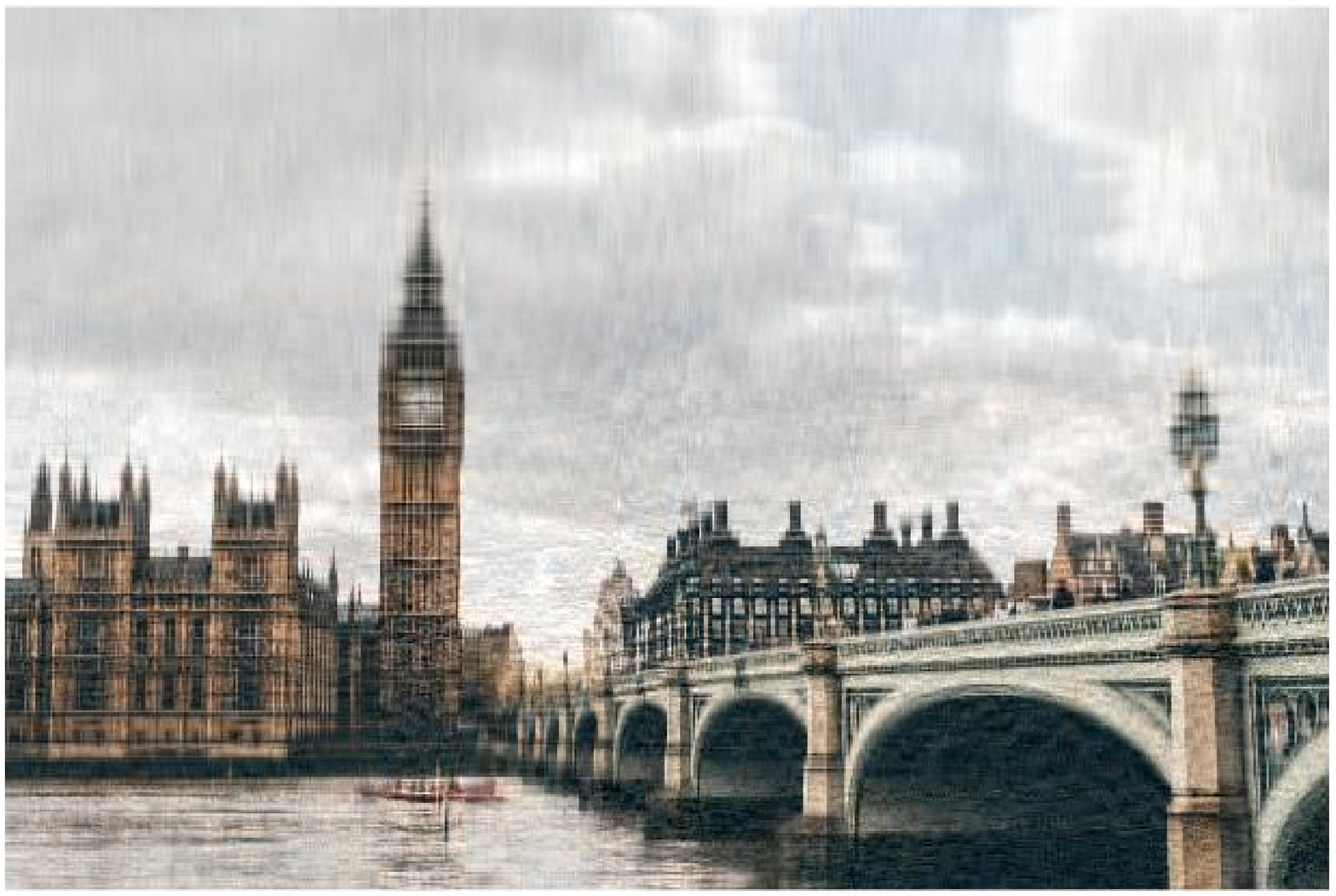} \\	
			{\footnotesize R-STHOSVD } & {\footnotesize Sketch-STHOSVD } & {\footnotesize sub-Sketch-STHOSVD} \vspace{-0.05in} \\
			{\footnotesize (1.29; 21.27)} &
			{\footnotesize (1.17; 21.09)} & {\footnotesize (1.29; 23.65)} \\
		\end{tabular}
		\caption{{Results comparison on LONDON picture with a size of $4775\times 7155\times 3$  in terms of CPU time and PSNR. The target rank is (50,50,3).}} 
	\label{fig:results-london}
\end{figure}

\begin{figure}[htb]
	\centering
	\begin{tabular}{ccc}
		\includegraphics[width=0.3\linewidth]{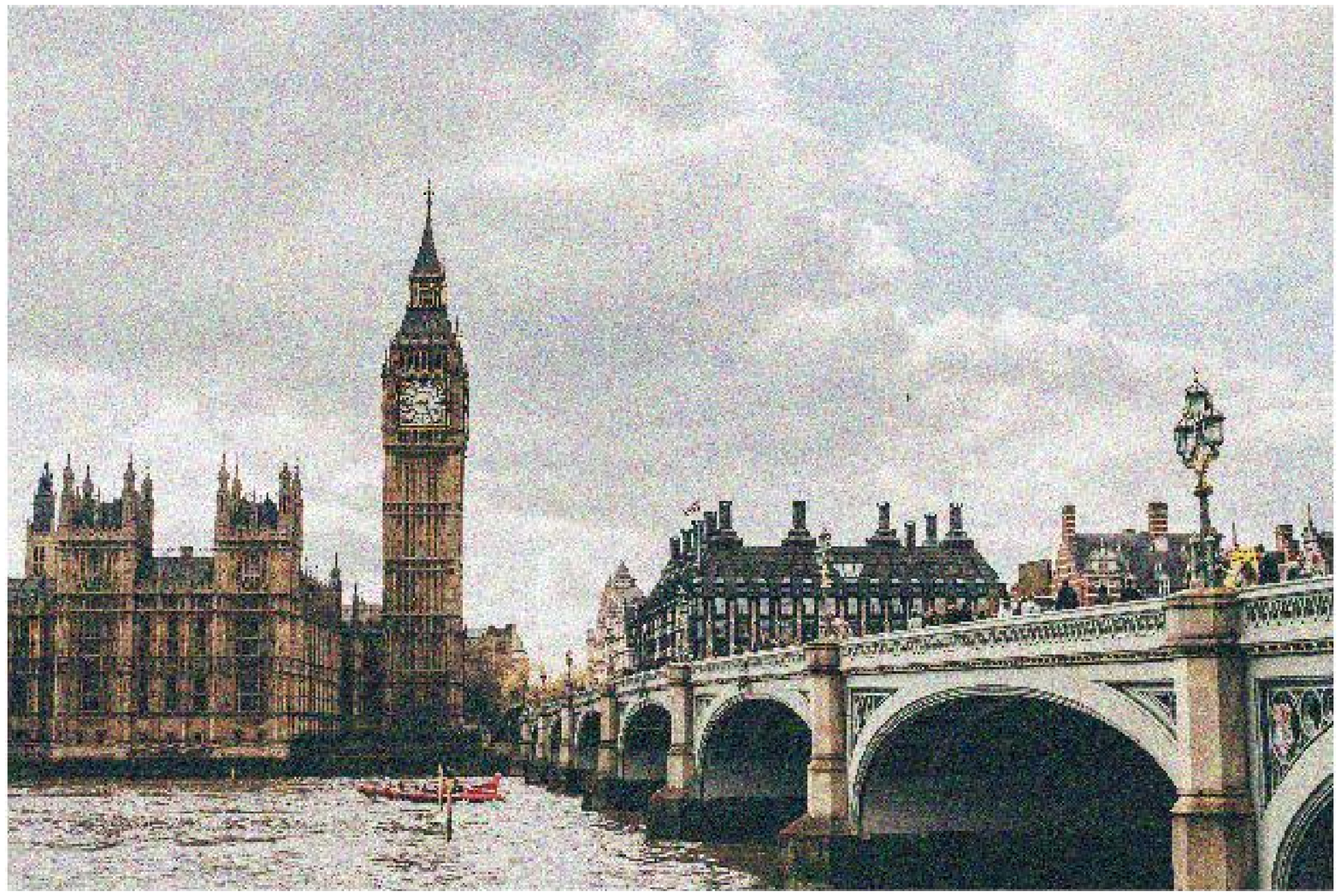} &
		\includegraphics[width=0.3\linewidth]{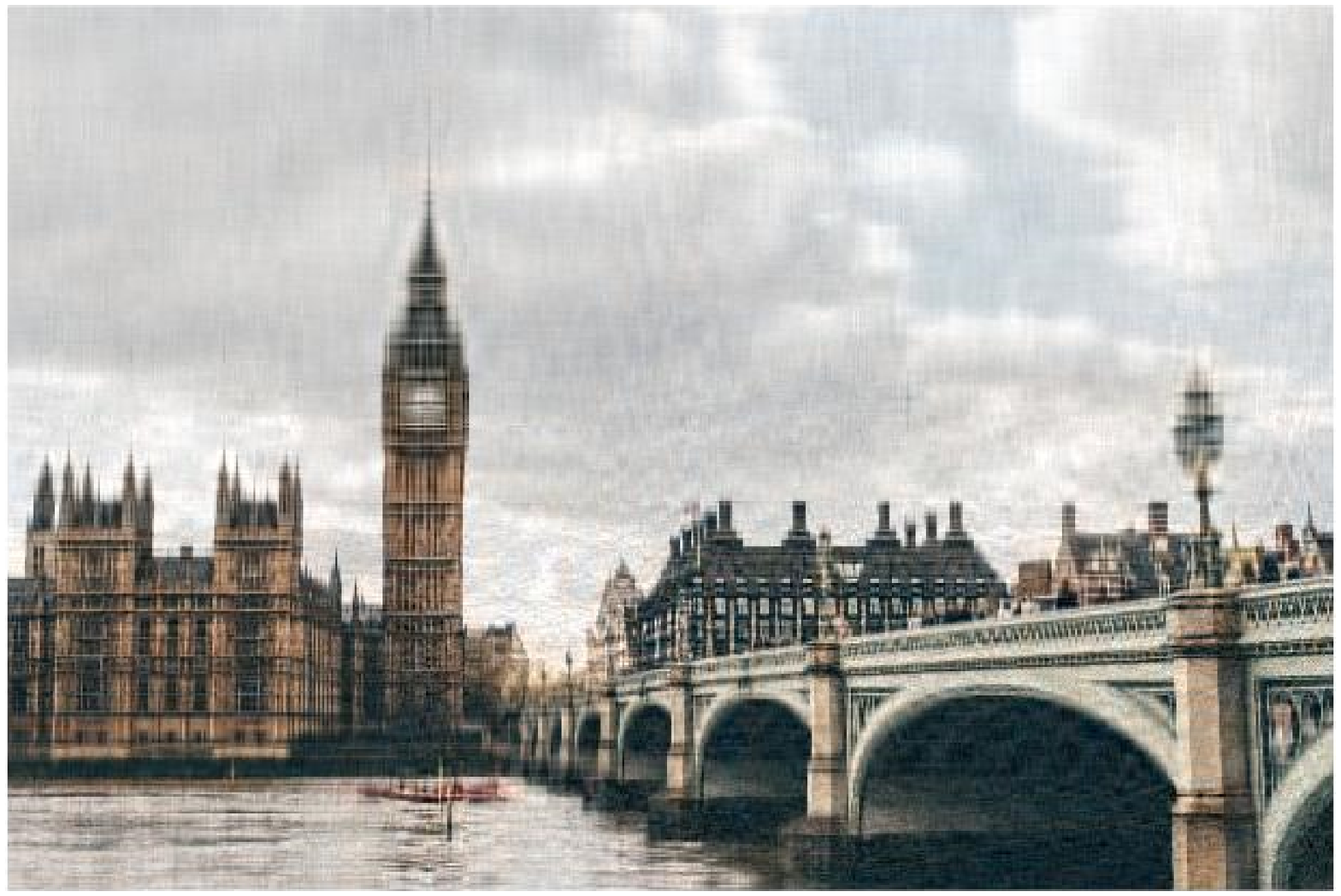} &
		\includegraphics[width=0.3\linewidth]{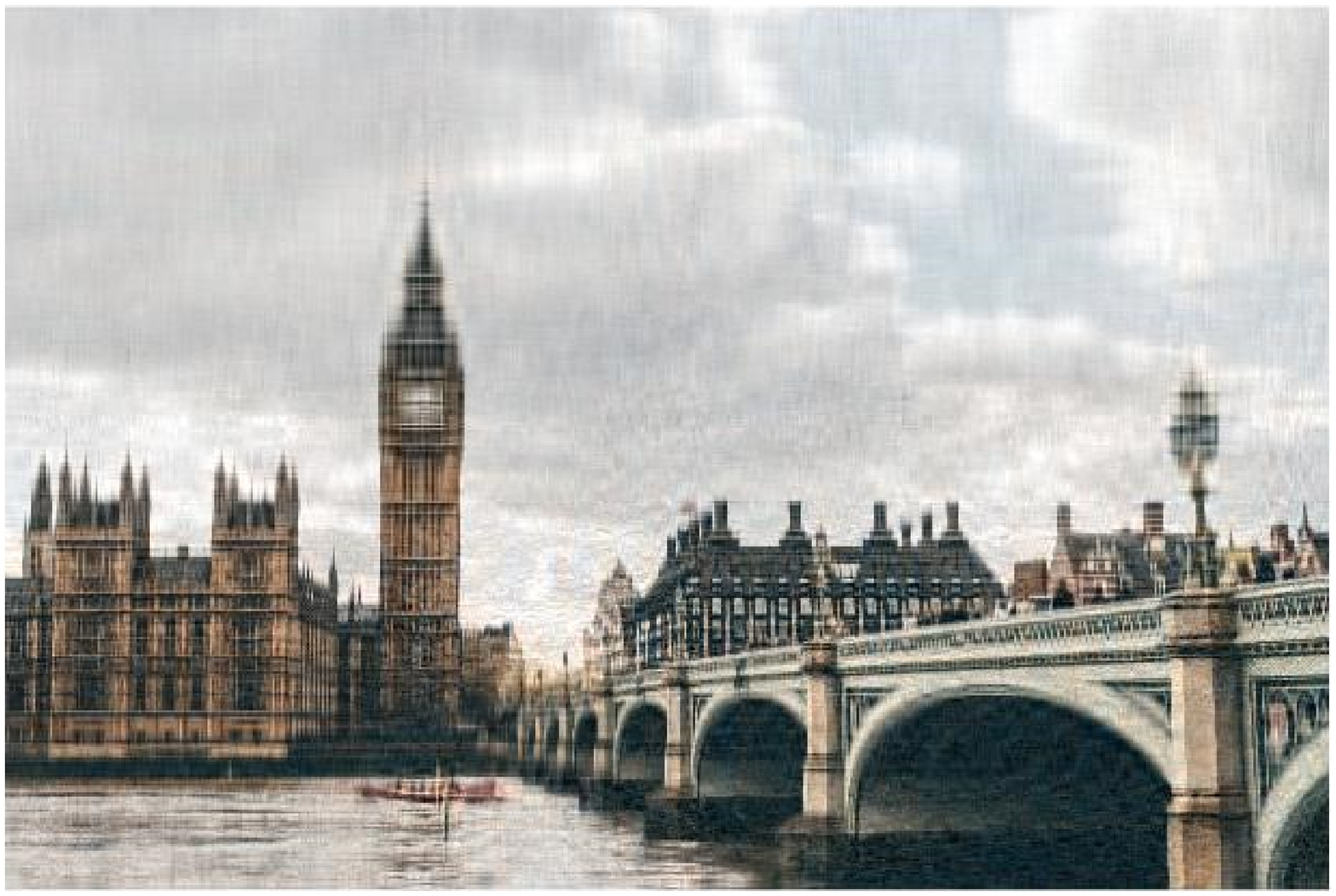} \\
		{\footnotesize Noisy picture(PSNR=16.92)} & {\footnotesize THOSVD (160.59; 20.54)} & {\footnotesize STHOSVD (50.16; 20.54)} \\
		\includegraphics[width=0.3\linewidth]{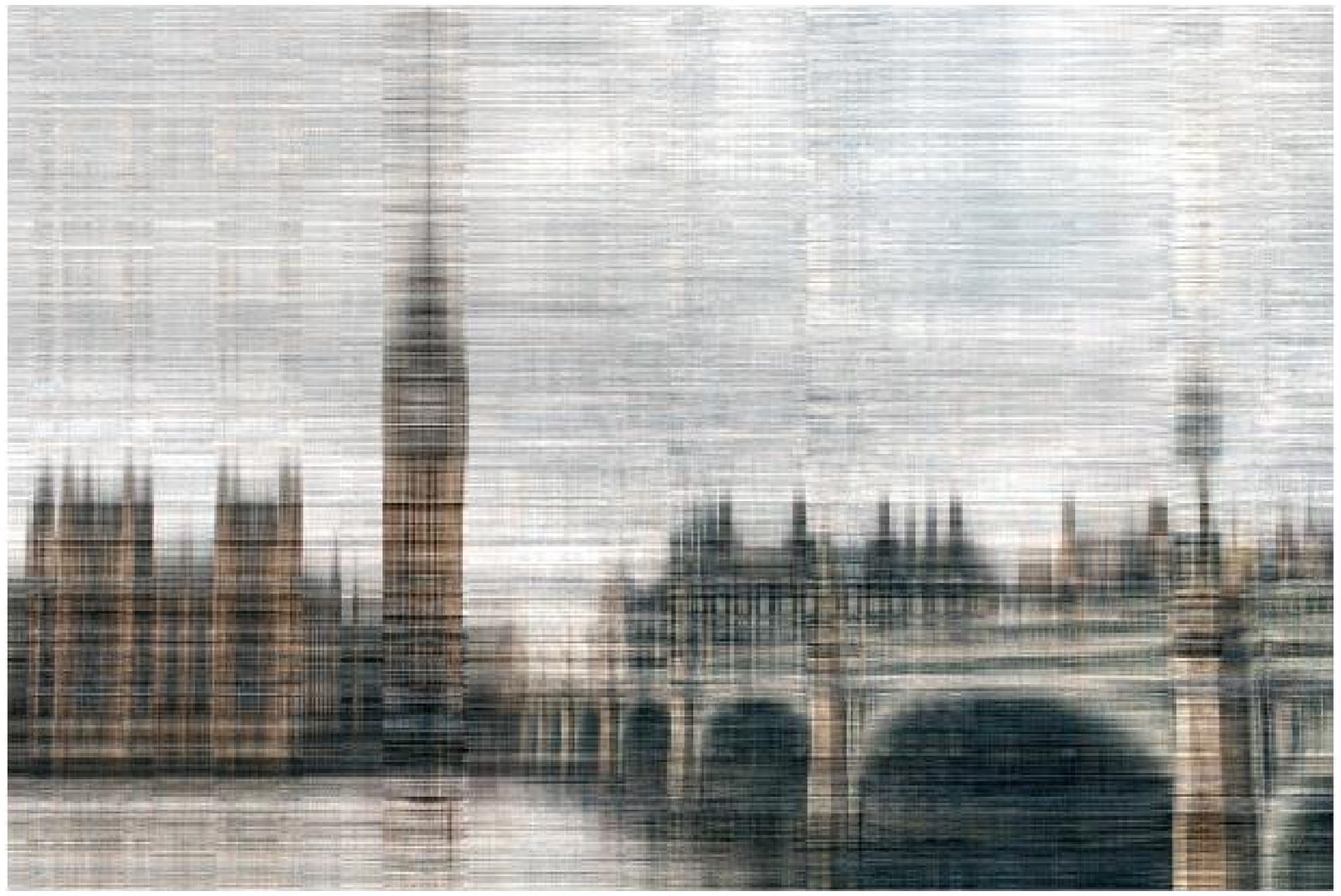} &
		\includegraphics[width=0.3\linewidth]{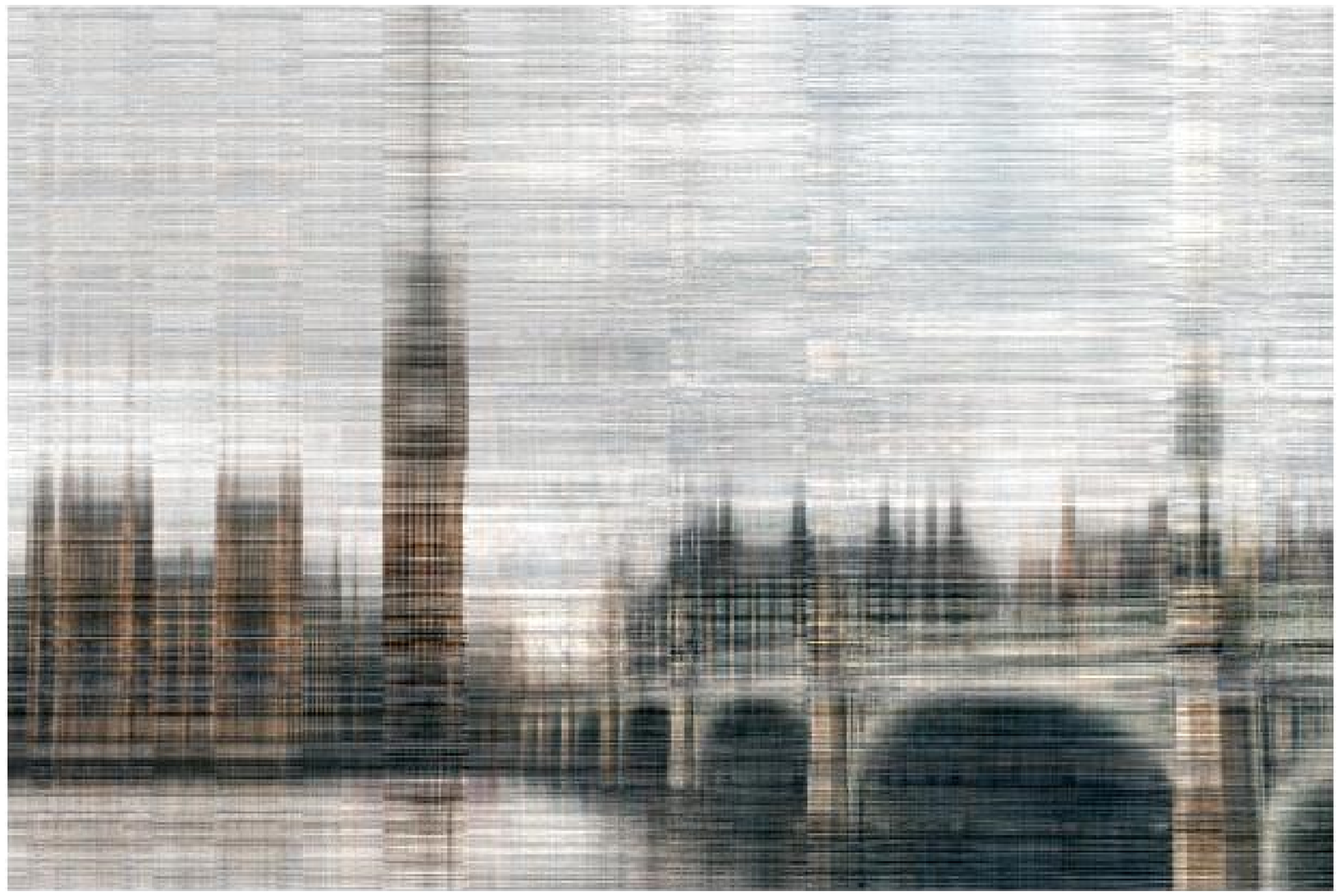} &
		\includegraphics[width=0.3\linewidth]{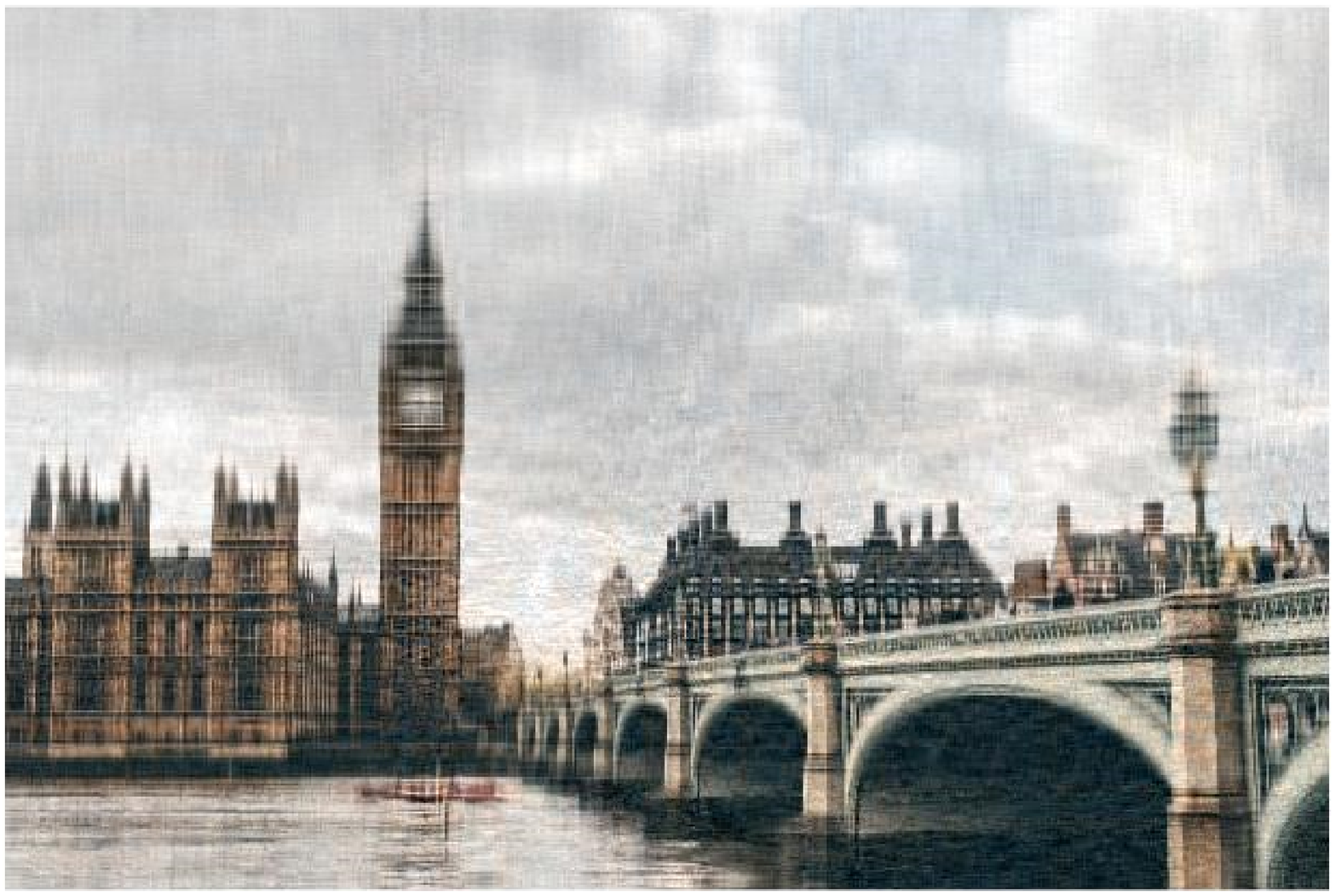} \\	
		{\footnotesize R-STHOSVD } & {\footnotesize Sketch-STHOSVD } & {\footnotesize sub-Sketch-STHOSVD} \vspace{-0.05in} \\
		{\footnotesize (1,25; 19.37)} &
		{\footnotesize (1.15; 19.25)} & {\footnotesize (1.45; 20.45)} \\
	\end{tabular}
	\caption{{Results comparison on LONDON picture  with a size of $4775\times 7155\times 3$ and white Gaussian noise in terms of CPU time and PSNR. The target rank is (50,50,3).}}
\label{fig:results-london-noise}
\end{figure}

\begin{table}[h]
\begin{center}
	\begin{minipage}{\textwidth}
		\caption{{Results comparison in terms of the relative error on the LONDON picture with a size of $4775\times 7155\times 3$ as the target rank increases.}} \label{london-error}
		\tiny{
			\begin{tabular*}
				{\textwidth}{@{\extracolsep{\fill}}lcccccc@{\extracolsep{\fill}}}
				\toprule%
				Target rank & THOSVD  & STHOSVD & R-STHOSVD &Sketch-STHOSVD  &sub-Sketch-STHOSVD\\
				\midrule
				(10,10,10)&0.019037
				&\textbf{0.019025}
				&0.031000
				&0.040006
				&0.020756
				\\
				(20,20,20) & 0.012669
				&\textbf{0.012644}
				&0.023467
				&0.027398
				&0.013703
				\\
				(30,30,30)& 0.010168
				&\textbf{0.010124}
				&0.018354
				&0.020451
				&0.010965
				\\
				(40,40,40) & 0.008630
				&\textbf{0.008599}
				&0.015792
				&0.017029
				&0.009443
				\\
				(50,50,50)& 0.007576
				&\textbf{0.007532}
				&0.013917
				&0.015333
				&0.008286
				\\
				(60,60,60)& 0.006778
				&\textbf{0.006710}
				&0.012967
				&0.013589
				&0.007359
				\\
				(70,70,70)&0.006119
				&\textbf{0.006049}
				&0.011813
				&0.011886
				&0.006687
				\\
				(80,80,80)& 0.005532
				&\textbf{0.005491}
				&0.010658
				&0.011148
				&0.006123
				\\
				(90,90,90)& 0.005076
				&\textbf{0.005023}
				&0.010018
				&0.010378
				&0.005602
				\\
				(100,100,100)& 0.004669
				&\textbf{0.004619}
				&0.009249
				&0.009578
				&0.005172
				\\
				\botrule
		\end{tabular*}}
	\end{minipage}
\end{center}
\end{table}

\begin{table}[h]
\begin{center}
	\begin{minipage}{\textwidth}
		\caption{{Results comparison in terms of the CPU time (in second) on the LONDON picture with a size of $4775\times 7155\times 3$ as the target rank increases.}} \label{london-time}
		\tiny{
			\begin{tabular*}
				{\textwidth}{@{\extracolsep{\fill}}lcccccc@{\extracolsep{\fill}}}
				\toprule%
				Target rank & THOSVD  & STHOSVD & R-STHOSVD &Sketch-STHOSVD  &sub-Sketch-STHOSVD\\
				\midrule
				(10,10,10)& 156.13
				&49.22
				&\textbf{0.94}
				&0.99
				&1.12
				\\
				(20,20,20) & 165.22
				&77.64
				&\textbf{1.24}
				&1.48
				&1.56
				\\
				(30,30,30)& 241.11
				&76.57
				&1.69
				&\textbf{1.39}
				&1.69
				\\
				(40,40,40) & 242.08
				&74.25
				&1.57
				&\textbf{1.45}
				&1.68
				\\
				(50,50,50)& 268.71
				&72.85
				&1.51
				&\textbf{1.45}
				&1.80
				\\
				(60,60,60)& 265.52
				&77.80
				&1.75
				&\textbf{1.51}
				&2.26
				\\
				(70,70,70)& 241.95
				&77.82
				&1.93
				&\textbf{1.78}
				&2.24
				\\
				(80,80,80)& 264.86
				&73.53
				&1.86
				&\textbf{1.74}
				&2.31
				\\
				(90,90,90)& 274.73
				&72.67
				&1.93
				&\textbf{1.83}
				&2.16
				\\
				(100,100,100)& 283.88
				&86.42
				&2.24
				&\textbf{2.20}
				&2.46
				\\
				\botrule
		\end{tabular*}}
	\end{minipage}
\end{center}
\end{table}

\begin{table}[h]
\begin{center}
	\begin{minipage}{\textwidth}
		\caption{{Results comparison in terms of the PSNR on the LONDON picture with a size of $4775\times 7155\times 3$ as the target rank increases.}} \label{london-psnr}
		\tiny{
			\begin{tabular*}
				{\textwidth}{@{\extracolsep{\fill}}lcccccc@{\extracolsep{\fill}}}
				\toprule%
				Target rank & THOSVD  & STHOSVD & R-STHOSVD &Sketch-STHOSVD  &sub-Sketch-STHOSVD\\
				\midrule
				(10,10,10)& 20.06
				&\textbf{20.07}
				&17.96
				&16.86
				&19.70
				\\
				(20,20,20) & 21.84
				&\textbf{21.84}
				&19.18
				&18.51
				&21.50
				\\
				(30,30,30)&22.79
				&\textbf{22.81}
				&20.25
				&19.78
				&22.46
				\\
				(40,40,40) & 23.50
				&\textbf{23.52}
				&20.90
				&20.57
				&23.11
				\\
				(50,50,50)& 24.07
				&\textbf{24.09}
				&21.45
				&21.03
				&23.68
				\\
				(60,60,60)& 24.55
				&\textbf{24.60}
				&21.76
				&21.55
				&24.20
				\\
				(70,70,70)& 25.00
				&\textbf{25.05}
				&22.16
				&22.13
				&24.61
				\\
				(80,80,80)& 25.43
				&\textbf{25.47}
				&22.61
				&22.41
				&25.00
				\\
				(90,90,90)&25.81
				&\textbf{25.85}
				&22.87
				&22.72
				&25.38
				\\
				(100,100,100)& 26.17
				&\textbf{26.22}
				&23.22
				&23.07
				&25.73
				\\
				\botrule
		\end{tabular*}}
	\end{minipage}
\end{center}
\end{table}

{In summary, the numerical results show the superiority of the sub-sketch STHOSVD algorithm for large-scale tensors with or without noise. We can see that sub-Sketch-STHOSVD could achieve close approximations to that of the deterministic algorithms in a time similar to other randomized algorithms.}
\newpage
\section{Conclusion} \label{Sec:con}
In this paper we proposed efficient sketching algorithms, i.e., Sketch-STHOSVD and sub-Sketch-STHOSVD, to calculate the low-rank Tucker approximation of tensors by combining the two-sided sketching technique with the STHOSVD algorithm and using the subspace power iteration. Detailed error analysis is also conducted. Numerical results on both synthetic and real-world data tensors demonstrate the competitive performance of the proposed algorithms in comparison to the state-of-the-art  algorithms.

\section*{Acknowledgements} We would like to thank the anonymous referees for their comments and suggestions on our paper, which lead to great improvements of the presentation. G. Yu's work was supported in part by National Natural Science Foundation of China (No. 12071104) and Natural Science Foundation of Zhejiang Province (No. LD19A010002).

\section*{Appendix}

\begin{lemma}\label{thm5}[\cite{Zhang et al2016}, Theorem 2] Let $\varrho < k-1$ be a positive natural number and $\Omega\in \mathbb{R}^{k\times n}$ be a Gaussian random matrix. Suppose $Q$ is obtained from Algorithm \ref{alg:sub-Sketch}. Then $\forall A\in \mathbb{R}^{m\times n}$, we have
\begin{equation}\label{7}
	\mathbb{E}_{\Omega}\|A-QQ^\top A\|_F^2\leq (1+f(\varrho,k)\varpi_k^{4q})\cdot\tau_{\varrho+1}^2({A}).
\end{equation}
\end{lemma}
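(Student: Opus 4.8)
The plan is to reproduce, for the matrix level, the two-stage (deterministic-then-probabilistic) template of Halko--Martinsson--Tropp \cite{Halko et al2011}, modified to account for the extra singular-value damping produced by the $q$ subspace iterations in Algorithm \ref{alg:sub-Sketch}. The first step is purely algebraic: I would show that the output $Q=Q_q$ is an orthonormal basis for $\mathrm{range}\big((AA^\top)^qA\,\Omega\big)$. This follows by induction on the loop index, since each {\tt thinQR} merely replaces a matrix by an orthonormal basis of its column space and therefore preserves ranges: $\mathrm{range}(Q_0)=\mathrm{range}(A\Omega)$, and if $\mathrm{range}(Q_{j-1})=\mathrm{range}((AA^\top)^{j-1}A\Omega)$ then $\mathrm{range}(\hat Q_j)=\mathrm{range}(A^\top Q_{j-1})$ and $\mathrm{range}(Q_j)=\mathrm{range}(A\hat Q_j)=\mathrm{range}((AA^\top)^jA\Omega)$. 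Writing the SVD $A=U\Sigma V^\top$, this identifies $QQ^\top$ as the orthogonal projector onto $\mathrm{range}(B\Omega)$ with $B=(AA^\top)^qA=U\Sigma^{2q+1}V^\top$; the interleaved orthonormalizations change nothing but the numerical conditioning, and the algorithm is effectively working with a matrix whose singular values are $\sigma_i^{2q+1}$ but whose singular vectors are those of $A$.

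Next I would set up the deterministic bound. Fix the truncation level $\varrho<k-1$, split $\Sigma=\mathrm{diag}(\Sigma_1,\Sigma_2)$ with $\Sigma_1$ collecting the top $\varrho$ singular values, split $V=[V_1\;V_2]$ accordingly, and put $\Omega_1=V_1^\top\Omega$, $\Omega_2=V_2^\top\Omega$. Using $\|I-QQ^\top\|_2\le1$ together with orthogonality of the dominant and tail singular blocks, I would first peel off the tail energy,
\begin{equation*}
\|A-QQ^\top A\|_F^2\le \tau_{\varrho+1}^2(A)+\big\|(I-QQ^\top)U_1\Sigma_1\big\|_F^2 ,
\end{equation*}
so that the only remaining task is to bound the residual term, which measures how well $\mathrm{range}(Q)=\mathrm{range}(B\Omega)$ captures the dominant left subspace $U_1$. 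Because $Q$ is adapted to $B$, a structural estimate in the style of \cite{Halko et al2011} controls this residual by a quantity of the form $\big\|\Sigma_2^{2q+1}\Omega_2\,\Omega_1^\dagger\,\Sigma_1^{-2q}\big\|_F^2$, in which the $2q$ extra powers appear as a damping $\Sigma_2^{2q}\Sigma_1^{-2q}$. The subtle point is that $Q$ has rank $k>\varrho$, so its range can already accommodate the intermediate directions indexed $\varrho+1,\dots,k$; the genuinely harmful contamination of the top-$\varrho$ subspace therefore comes only from directions beyond $k$, whose damping ratios $\sigma_i/\sigma_j$ ($i>k\ge j$) are each bounded by $\varpi_k^{2q}$. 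Squaring (Frobenius norm) turns this into the factor $\varpi_k^{4q}$, leaving a clean residual of order $\varpi_k^{4q}\,\|\Sigma_2\Omega_2\Omega_1^\dagger\|_F^2$.

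Finally I would take the expectation over the Gaussian test matrix. Since $V_1,V_2$ have orthonormal columns, $\Omega_1$ and $\Omega_2$ are independent Gaussian matrices; conditioning on $\Omega_1$ and applying the moment identity $\mathbb{E}_{\Omega_2}\|S\,\Omega_2\,T\|_F^2=\|S\|_F^2\|T\|_F^2$ for fixed $S,T$ reduces the residual to $\|\Sigma_2\|_F^2\cdot\mathbb{E}\|\Omega_1^\dagger\|_F^2$. The Gaussian pseudoinverse moment $\mathbb{E}\|\Omega_1^\dagger\|_F^2=\varrho/(k-\varrho-1)=f(\varrho,k)$, which is finite precisely because $\varrho<k-1$, then gives $\mathbb{E}\|(I-QQ^\top)U_1\Sigma_1\|_F^2\le f(\varrho,k)\,\varpi_k^{4q}\,\tau_{\varrho+1}^2(A)$; adding back the tail energy yields $(1+f(\varrho,k)\varpi_k^{4q})\,\tau_{\varrho+1}^2(A)$, as claimed. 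I expect the deterministic residual estimate to be the main obstacle: transferring the projection quality from the power-iterated matrix $B$ back to $A$ in the Frobenius norm is considerably more delicate than the spectral-norm argument, and it is exactly here that one must pair the damping $\Sigma_2^{2q}\Sigma_1^{-2q}$ with the auxiliary split at $k$ so that the gap is taken at the sketch size $k$ and the exponent $4q$ is retained without loss. This bookkeeping is the technical core carried out in \cite{Zhang et al2016}.
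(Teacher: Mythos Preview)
The paper does not prove this lemma at all: it is quoted verbatim as Theorem~2 of \cite{Zhang et al2016} and used as a black box in the proof of Lemma~\ref{lemma-thmsub}. There is therefore no ``paper's own proof'' to compare your proposal against.

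That said, your sketch is faithful to the argument in the cited source. The identification $\mathrm{range}(Q_q)=\mathrm{range}\big((AA^\top)^qA\Omega\big)$ by induction on the QR steps, the Pythagorean split $\|(I-QQ^\top)A\|_F^2=\|(I-QQ^\top)U_1\Sigma_1\|_F^2+\|(I-QQ^\top)U_2\Sigma_2\|_F^2\le\|(I-QQ^\top)U_1\Sigma_1\|_F^2+\tau_{\varrho+1}^2(A)$, and the Gaussian moment $\mathbb{E}\|\Omega_1^\dagger\|_F^2=f(\varrho,k)$ are all correct and standard. You have also correctly located the one genuinely nontrivial step: transferring the HMT deterministic bound from the powered matrix $B=(AA^\top)^qA$ back to $A$ in Frobenius norm, while arranging that the damping ratio is taken at index $k$ (not $\varrho$) so that the factor is $\varpi_k^{4q}$ rather than $\varpi_\varrho^{4q}$. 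Your proposal does not actually carry this step out---it is described only qualitatively---but you flag it as the technical core and defer to \cite{Zhang et al2016}, which is exactly what the present paper does.

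One small caveat: Algorithm~\ref{alg:sub-Sketch} orthonormalizes $\Omega$ before use, so strictly speaking $\Omega_1=V_1^\top\Omega$ and $\Omega_2=V_2^\top\Omega$ are not independent Gaussian blocks. The lemma as stated (and the cited reference) analyzes the unnormalized Gaussian $\Omega$; the orthonormalization is a numerical-stability device that does not affect the projector $QQ^\top$ in exact arithmetic, so the discrepancy is harmless, but your write-up should acknowledge it.
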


\begin{lemma}\label{lem1}[\cite{Tropp et al2017}, Lemma A.3]
Let $A\in \mathbb{R}^{m\times n}$ be an input matrix and $\hat{{A}}=QX$ be the approximation obtained from Algorithm \ref{alg:sub-Sketch}. The approximation error can be decomposed as
\begin{equation}\label{8}
	\|A-\hat{A}\|_F^2=\|A-QQ^\top A\|_F^2+\|X-Q^\top A\|_F^2.
\end{equation}
\end{lemma}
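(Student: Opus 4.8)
The plan is to establish this identity as a Pythagorean decomposition for the Frobenius (trace) inner product, exploiting the single structural fact that the factor $Q$ produced by {\tt thinQR} in Algorithm \ref{alg:sub-Sketch} has orthonormal columns, i.e. $Q^\top Q = I$. First I would split the residual by inserting and subtracting the projected matrix $QQ^\top A$, writing
$$A - \hat{A} = A - QX = (I - QQ^\top)A + Q(Q^\top A - X).$$
The first summand is the residual of the orthogonal projection of $A$ onto the range of $Q$, while the second summand lies entirely in the range of $Q$.

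Next I would show that these two summands are orthogonal with respect to the Frobenius inner product $\langle M, N\rangle = {\rm tr}(M^\top N)$. Setting $M = (I - QQ^\top)A$ and $N = Q(Q^\top A - X)$, and using that $I - QQ^\top$ is symmetric, the cross term equals
$${\rm tr}\big(A^\top (I - QQ^\top) Q (Q^\top A - X)\big),$$
which vanishes because $(I - QQ^\top)Q = Q - Q(Q^\top Q) = 0$ by orthonormality of the columns of $Q$. This is the crux of the whole argument; once the cross term is annihilated, the Pythagorean theorem yields
$$\|A - \hat{A}\|_F^2 = \|(I - QQ^\top)A\|_F^2 + \|Q(Q^\top A - X)\|_F^2.$$

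Finally I would simplify each piece. The first term equals $\|A - QQ^\top A\|_F^2$ by definition. For the second, again using $Q^\top Q = I$, I would invoke the fact that left multiplication by a matrix with orthonormal columns preserves the Frobenius norm, namely $\|QZ\|_F^2 = {\rm tr}(Z^\top Q^\top Q Z) = \|Z\|_F^2$ applied to $Z = Q^\top A - X$, giving $\|X - Q^\top A\|_F^2$. Combining the two terms delivers the claimed decomposition. I expect no genuine analytic obstacle here: the entire proof rests on the orthonormality relation $Q^\top Q = I$, and the only point requiring care is verifying that the cross term indeed annihilates, which is precisely where this relation enters decisively.
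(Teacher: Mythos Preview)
Your argument is correct and is exactly the standard Pythagorean decomposition one expects here; note that the paper does not supply its own proof of this lemma but simply cites it from \cite{Tropp et al2017}, so your proof is essentially the same as what the cited source provides.
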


\begin{lemma}\label{lem2}[\cite{Tropp et al2017}, Lemma A.5]
Assume $\Psi\in \mathbb{R}^{l\times n}$ is a standard normal matrix independent from $\Omega$. Then
\begin{equation}\label{9}
	\mathbb{E}_{\Psi}\|X-Q^\top A\|_F^2=f(k,l)\cdot\|A-QQ^\top A\|_F^2.
\end{equation}
\end{lemma}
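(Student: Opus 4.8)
The plan is to reduce the residual $X-Q^\top A$ to a Gaussian random matrix whose squared Frobenius norm has a computable mean, following the route of \cite{Tropp et al2017}. Recall from Algorithm \ref{alg:Sketch} (equivalently Algorithm \ref{alg:sub-Sketch}) that $\hat{A}=QX$ with $X=(\Psi Q)^\dagger W=(\Psi Q)^\dagger\Psi A$, where $Q\in\mathbb{R}^{m\times k}$ has orthonormal columns and $\Psi$ acts on $A$ from the left. First I would simplify the residual. Splitting $A=QQ^\top A+(I-QQ^\top)A$ and noting that $(\Psi Q)^\dagger(\Psi Q)=I_k$ holds almost surely once $\Psi Q$ has full column rank (which needs $l>k$), I obtain
\begin{equation*}
X-Q^\top A=(\Psi Q)^\dagger\Psi(I-QQ^\top)A .
\end{equation*}
Thus the residual sees $\Psi$ only through its action on $\mathrm{col}(Q)$ and on the orthogonal complement.

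Next I would exploit the rotational invariance of the standard normal distribution. Let $Q_\perp\in\mathbb{R}^{m\times(m-k)}$ complete $Q$ to an orthogonal matrix $[Q\ Q_\perp]$, and set $\Psi_1=\Psi Q$ and $\Psi_2=\Psi Q_\perp$. Since $\Psi[Q\ Q_\perp]$ is again standard normal, $\Psi_1$ and $\Psi_2$ are independent standard normal matrices. Writing $B=Q_\perp^\top A$, so that $\|B\|_F^2=\|(I-QQ^\top)A\|_F^2=\|A-QQ^\top A\|_F^2$, turns the residual into
\begin{equation*}
X-Q^\top A=\Psi_1^\dagger\Psi_2 B .
\end{equation*}

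The expectation then factors into two independent averages. Conditioning on $\Psi_1$ and averaging over $\Psi_2$, I would invoke the elementary Gaussian trace identity $\mathbb{E}[\mathrm{tr}(G^\top C G D)]=\mathrm{tr}(C)\,\mathrm{tr}(D)$ valid for a standard normal $G$ and fixed $C,D$; with $C=(\Psi_1^\dagger)^\top\Psi_1^\dagger$ and $D=BB^\top$ this yields $\mathbb{E}_{\Psi_2}\|\Psi_1^\dagger\Psi_2 B\|_F^2=\|\Psi_1^\dagger\|_F^2\,\|B\|_F^2$. It remains to average $\|\Psi_1^\dagger\|_F^2=\mathrm{tr}\big((\Psi_1^\top\Psi_1)^{-1}\big)$ over $\Psi_1\in\mathbb{R}^{l\times k}$. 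Recognising $\Psi_1^\top\Psi_1$ as a $k\times k$ Wishart matrix with $l$ degrees of freedom, the inverse-Wishart mean gives $\mathbb{E}\,\mathrm{tr}\big((\Psi_1^\top\Psi_1)^{-1}\big)=k/(l-k-1)=f(k,l)$, valid precisely when $l>k+1$. Multiplying the two factors and recalling $\|B\|_F^2=\|A-QQ^\top A\|_F^2$ delivers the claimed identity \eqref{9}.

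I expect the main obstacle to be the final step, the closed-form evaluation $\mathbb{E}\,\mathrm{tr}((\Psi_1^\top\Psi_1)^{-1})=k/(l-k-1)$: this is where the factor $f(k,l)$ originates and is the only place the hypothesis $l>k+1$ is genuinely needed, so I would either cite the standard moment formula for the inverse-Wishart distribution or derive it directly. A secondary technical point is verifying that $\Psi Q$ has full column rank with probability one, so that $(\Psi Q)^\dagger(\Psi Q)=I_k$ and the entire argument is well defined almost surely.
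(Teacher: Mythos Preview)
Your argument is correct and is essentially the proof given in the cited reference \cite{Tropp et al2017}, Lemma~A.5: reduce $X-Q^\top A$ to $(\Psi Q)^\dagger\Psi(I-QQ^\top)A$, rotate $\Psi$ into independent blocks $\Psi_1=\Psi Q$ and $\Psi_2=\Psi Q_\perp$, and then evaluate the two Gaussian expectations (the trace identity for $\Psi_2$ and the inverse-Wishart moment for $\Psi_1$). Note, however, that the present paper does not supply its own proof of this lemma at all --- it simply quotes the result from \cite{Tropp et al2017} --- so there is no ``paper's proof'' to compare against beyond the original source you have effectively reproduced.
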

The error-bound for Algorithm \ref{alg:sub-Sketch} can be shown in Lemma \ref{lemma-thmsub} below.
\begin{lemma}\label{lemma-thmsub}
Assume the sketch size parameter satisfies $l>k+1$. Draw random test matrices $\Omega\in\mathbb{R}^{n\times k}$ and $\Psi{\in\mathbb{R}}^{l\times m}$ independently from the standard normal distribution. Then the rank-$k$ approximation $\hat{{A}}$ obtained from Algorithm \ref{alg:sub-Sketch} satisfies
\begin{equation}\nonumber
	\begin{aligned}
		\mathbb{E}\parallel {A}-\hat{{A}}\parallel_F^2&\le(1+f(k,l))\cdot\min_{\varrho<k-1}(1+f(\varrho,k){\varpi_k}^{4q})\cdot\tau_{\varrho+1}^2({A}).
	\end{aligned}
\end{equation}
\end{lemma}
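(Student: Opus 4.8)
The plan is to combine the three preceding lemmas in exactly the same way that Tropp et al.'s Lemma A.3, A.5 and the deterministic bound were combined to prove Theorem \ref{thmSketch}, but now with the sharpened projection bound of Lemma \ref{thm5} in place of the non-power-iterated estimate. First I would invoke Lemma \ref{lem1} (the error decomposition for Algorithm \ref{alg:sub-Sketch}) to split
\begin{equation*}
	\|A-\hat A\|_F^2 = \|A-QQ^\top A\|_F^2 + \|X-Q^\top A\|_F^2 .
\end{equation*}
Next, take expectation over $\Psi$ first, holding $\Omega$ (hence $Q$) fixed: Lemma \ref{lem2} gives $\mathbb{E}_\Psi\|X-Q^\top A\|_F^2 = f(k,l)\cdot\|A-QQ^\top A\|_F^2$, so that
\begin{equation*}
	\mathbb{E}_\Psi\|A-\hat A\|_F^2 = (1+f(k,l))\cdot\|A-QQ^\top A\|_F^2 .
\end{equation*}
This step uses the independence of $\Psi$ and $\Omega$, which is part of the hypothesis.

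Then I would take the remaining expectation over $\Omega$ and apply Lemma \ref{thm5}: for every fixed $\varrho<k-1$,
\begin{equation*}
	\mathbb{E}_\Omega\|A-QQ^\top A\|_F^2 \le (1+f(\varrho,k)\varpi_k^{4q})\cdot\tau_{\varrho+1}^2(A).
\end{equation*}
Combining the two displays by the tower property $\mathbb{E} = \mathbb{E}_\Omega\mathbb{E}_\Psi$ yields
\begin{equation*}
	\mathbb{E}\|A-\hat A\|_F^2 \le (1+f(k,l))\cdot(1+f(\varrho,k)\varpi_k^{4q})\cdot\tau_{\varrho+1}^2(A)
\end{equation*}
for each admissible $\varrho$, and since the left-hand side does not depend on $\varrho$ one may minimize the right-hand side over $\varrho<k-1$, which is exactly the claimed bound. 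The factor $(1+f(k,l))$ pulls out of the minimum because it is independent of $\varrho$.

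Most of this is bookkeeping; the only genuine content lies in Lemma \ref{thm5}, which is cited from \cite{Zhang et al2016} and already supplied, so the remaining obstacle is merely making sure the order of conditioning is legitimate — i.e. that $Q$ produced by the power iteration in Algorithm \ref{alg:sub-Sketch} is a (measurable) function of $\Omega$ alone and does not involve $\Psi$, so that $\mathbb{E}_\Psi$ can indeed be taken with $Q$ frozen, and conversely that $X=(\Psi Q)^\dagger W$ is the only place $\Psi$ enters. Both are immediate from inspecting the algorithm. I would note in passing that this recovers Lemma \ref{thmSketch} when $q=0$ (since $\varpi_k^{0}=1$), which is a useful sanity check. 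The extension of this matrix bound to the full tensor statement in Theorem \ref{thmsubs} then follows verbatim from the telescoping argument in the proof of Theorem \ref{thm:Sketch-STHOSVD}, with $(1+f(\varrho_n,r_n)\varpi_{r_n}^{4q})$ replacing the corresponding factor at each mode.
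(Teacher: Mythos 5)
Your proposal is correct and follows essentially the same route as the paper's own proof: decompose the error via Lemma \ref{lem1}, apply Lemma \ref{lem2} (expectation over $\Psi$ with $Q$ fixed by independence) to obtain the factor $(1+f(k,l))$, bound $\mathbb{E}_\Omega\|A-QQ^\top A\|_F^2$ by Lemma \ref{thm5}, and finally minimize over $\varrho<k-1$. The additional remarks on the order of conditioning and the $q=0$ sanity check are sound but not needed beyond what the paper records.
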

\begin{proof}
Using equations (\ref{7}), (\ref{8}) and (\ref{9}), we have
\begin{equation*}
	\begin{aligned}
		\mathbb{E}\parallel A-\hat{{A}}\parallel_F^2&=\mathbb{E}_{\Omega}\|A-QQ^\top A\|_F^2+\mathbb{E}_{\Omega}\mathbb{E}_{\Psi}\|X-Q^\top A\|_F^2\\
		& =(1+f(k,l))\cdot\mathbb{E}_{\Omega}\|A-QQ^\top A\|_F^2\\
		& \leq(1+f(k,l))\cdot(1+f(\varrho,k){\varpi_k}^{4q})\cdot\tau_{\varrho+1}^2({A}). 		
	\end{aligned}
\end{equation*}
After minimizing over eligible index $\varrho<k-1$, the proof is completed.
\end{proof}

We are now in the position to prove Theorem \ref{thmsubs}. Combining Theorem \ref{thmST} and Lemma \ref{lemma-thmsub}, we have
\begin{equation*}
\begin{aligned}
	& \ \mathbb{E}_{\{\Omega _j\}_{j = 1}^N}\|\mathcal {X} - \widehat {\mathcal {X}}\|_F^2  \\
	= & \sum \limits_{n = 1}^N\mathbb{E}_{\{\Omega _j\}_{j = 1}^N} \|\hat{\mathcal {X}}^{(n - 1)} - \hat{\mathcal {X}}^{(n )}\|_F^2\\
	= & \sum \limits_{n = 1}^N \mathbb{E}_{\{\Omega _j\}_{j = 1}^{n-1}}\left\{\mathbb{E}_{\Omega _n}\|\hat{\mathcal {X}}^{(n - 1)} - \hat{\mathcal {X}}^{(n)}\|_F^2 \right\}\\
	= & \sum \limits_{n = 1}^N \mathbb{E}_{\{\Omega _j\}_{j = 1}^{n-1}}\left\{\mathbb{E}_{\Omega _n}\|\mathcal {G}^{(n - 1)} \times _{i = 1}^{n - 1} {{U}^{(i)}}{ \times _n}({I} - {{U}^{(n)}}{U}^{(n)\top})
	\|_F^2 \right\}\\
	\le & \sum \limits_{n = 1}^N \mathbb{E}_{\{\Omega _j\}_{j = 1}^{n-1}}\left\{\mathbb{E}_{\Omega _n}\|({I} - {{U}^{(n)}}{U}^{(n)\top}){G}_{(n)}^{(n-1)})
	\|_F^2 \right\}\\
	\le & \sum \limits_{n = 1}^N \mathbb{E}_{\{\Omega _j\}_{j = 1}^{n-1}}(1+f(r_n,l_n))\cdot\min_{\varrho_n<r_n-1}(1+f(\varrho_n,r_n){\varpi_r}^{4q})\sum \limits_{i = r_n+1}^{I_n}\sigma_{i}^{2}({G}_{(n)}^{(n-1)})\\
	\le & \sum\limits_{n = 1}^N \mathbb{E}_{\{\Omega _j\}_{j = 1}^{n-1}} (1+f(r_n,l_n))\cdot\min_{\varrho_n<r_n-1}(1+f(\varrho_n,r_n){\varpi_r}^{4q}) \Delta _n^2({\cal X})\\
	= & \sum\limits_{n = 1}^N (1+f(r_n,l_n))\cdot\min_{\varrho_n<r_n-1}(1+f(\varrho_n,r_n){\varpi_r}^{4q})\Delta _n^2({\cal X})\\
	\le & \sum\limits_{n = 1}^N (1+f(r_n,l_n))\cdot\min_{\varrho_n<r_n-1}(1+f(\varrho_n,r_n){\varpi_r}^{4q}) \|\mathcal{X}-\hat{\mathcal{X}}_{\rm opt}\|_F^2 \ ,
\end{aligned}
\end{equation*}
which completes the proof of Theorem \ref{thmsubs}.


\begin{thebibliography}{99}
\bibitem{Comon2014}
Comon, P.: Tensors: A brief introduction. IEEE Signal Processing Magazine. 31(3), 44-53(2014)
\bibitem{Hitchcock1928}
Hitchcock, F. L.: Multiple Invariants and Generalized Rank of a P-Way Matrix or Tensor. Journal of Mathematics and Physics. 7(1-4), 39-79(1928)
\bibitem{Kiers2000}
Kiers, H. A. L.: Towards a standardized notation and terminology in multiway analysis. Journal of Chemometrics Society. 14(3), 105-122(2000)
\bibitem{Tucker1963}
Tucker, L. R.: Implications of factor analysis of three-way matrices for measurement of change. Problems in measuring change. 15, 122-137(1963)
\bibitem{Tucker1966}
Tucker, L. R.: Some mathematical notes on three-mode factor analysis. Psychometrika. 31(3), 279-311(1966)
\bibitem{De Lathauwer et al2000}
De Lathauwer, L., De Moor, B.,  Vandewalle, J.: A multilinear singular value decomposition. SIAM journal on Matrix Analysis Applications. 21(4), 1253-1278(2000)
\bibitem{Hackbusch and Kuhn2009}
Hackbusch, W.,  K\"{u}hn, S.: A new scheme for the tensor representation. Journal of Fourier analysis applications.
15(5), 706-722(2009)

\bibitem{Grasedyck2010}
Grasedyck, L.: Hierarchical Singular Value Decomposition of Tensors. SIAM journal on Matrix Analysis Applications. 31(4), 2029-2054 (2010)

\bibitem{Oseledets2011}
Oseledets, I. V.: Tensor-train decomposition. SIAM Journal on Scientific Computing. 33(5), 2295-2317(2011)
\bibitem{De Lathauwer2000}
De Lathauwer, L., De Moor, B.,  Vandewalle, J.: On the best rank-1 and rank-(r1, r2,...,rn) approximation of higher-order tensors. SIAM journal on Matrix Analysis Applications. 21(4), 1324-1342(2000)


\bibitem{Vannieuwenhoven et al2012}
Vannieuwenhoven, N., Vandebril, R.,  Meerbergen, K.: A new truncation strategy for the higher-order singular value decomposition. SIAM Journal on Scientific Computing. 34(2), A1027-A1052(2012)

\bibitem{Zhou et al2014}
Zhou, G., Cichocki, A.,  Xie, S.: Decomposition of big tensors with low multilinear rank. arXiv preprint, arXiv:1412.1885(2014)
\bibitem{Che and Wei2019}
Che, M.,  Wei, Y.: Randomized algorithms for the approximations of Tucker and the tensor train decompositions. Advances in Computational Mathematics. 45(1), 395-428(2019)

\bibitem{Minster et al2020}
Minster, R., Saibaba, A. K.,  Kilmer, M. E.:
Randomized algorithms for low-rank tensor decompositions in the Tucker format. SIAM Journal on Mathematics of Data Science. 2(1), 189-215 (2020)


\bibitem{Che et al2020}
Che, M., Wei, Y.,  Yan, H.: The computation of low multilinear rank approximations of tensors via power scheme and random projection. SIAM Journal on Matrix Analysis Applications. 41(2), 605-636 (2020)



\bibitem{Che et al2021}
Che, M., Wei, Y.,  Yan, H.: Randomized algorithms for the low multilinear rank approximations of tensors. Journal of Computational Applied Mathematics. 390(2), 113380(2021)

\bibitem{sun2020}
Sun, Y., Guo, Y., Luo, C., Tropp, J.,  Udell, M.: Low-rank tucker approximation of a tensor from streaming data. SIAM Journal on Mathematics of Data Science. 2(4), 1123-1150(2020)

\bibitem{Tropp et al2019}
Tropp, J. A., Yurtsever, A., Udell, M.,  Cevher, V.: Streaming low-rank matrix approximation with an application to scientific simulation. SIAM Journal on Scientific Computing. 41(4), A2430-A2463(2019)




\bibitem{Malik and Becker2018}
Malik, O. A., Becker, S.: Low-rank tucker decomposition of large tensors using tensorsketch. Advances in neural information processing systems. 31, 10116-10126 (2018)


\bibitem{Ahmadi-Asl et al2021}
Ahmadi-Asl, S., Abukhovich, S., Asante-Mensah, M. G., Cichocki, A., Phan, A. H., Tanaka, T.: Randomized algorithms for computation of Tucker decomposition and higher order SVD (HOSVD). IEEE Access. 9, 28684-28706(2021)
\bibitem {Halko et al2011}
Halko, N., Martinsson, P.-G.,  Tropp, J. A.: Finding structure with randomness: Probabilistic algorithms for constructing approximate matrix decompositions. SIAM review. 53(2), 217-288 (2011)
\bibitem{Tropp et al2017}
Tropp, J. A., Yurtsever, A., Udell, M.,  Cevher, V.: Practical sketching algorithms for low-rank matrix approximation. SIAM Journal on Matrix Analysis Applications. 38(4), 1454-1485(2017)
\bibitem{Rokhlin et al2009}
Rokhlin, V., Szlam, A., Tygert, M.: A randomized algorithm for principal component analysis. SIAM Journal on Matrix Analysis Applications, 31(3), 1100-1124(2009)
\bibitem{Xiao et al2021}
Xiao, C., Yang, C., Li, M.: Efficient Alternating Least Squares Algorithms for Low Multilinear Rank Approximation of Tensors. Journal of Scientific Computing. 87(3), 1-25(2021)
\bibitem{Zhang et al2016}
Zhang, J., Saibaba, A. K., Kilmer, M. E.,  Aeron, S.: A randomized tensor singular value decomposition based on the t-product. Numerical Linear Algebra with Applications. 25(5), e2179(2018)




\end{thebibliography}


\end{document}